\documentclass[12pt]{article}
\usepackage{amsmath,amsthm, amssymb,
extsizes}
\usepackage{amscd,array}
\usepackage[all]{xy}
\usepackage[active]{srcltx}

 \sloppy

\title{Canonical matrices of
forms and pairs of forms over finite
and $\mathfrak p$-adic fields}
\author{
Vladimir V. Sergeichuk
\\
Institute of
Mathematics,\\
Tereshchenkivska 3,
Kiev,
Ukraine,\\sergeich@imath.kiev.ua}
\date{}

\begin{document}

\renewcommand{\le}{\leqslant}
\renewcommand{\ge}{\geqslant}
\newcommand{\diag}{\,\diagdown\,}

\newcommand{\lin}{\,\frac{}{\quad}\,}

\newcommand{\is}{\stackrel
{\text{\raisebox{-1ex}{$\sim\
\;$}}}{\to}}

\newcommand{\ind}%
{\mathop{\rm
ind}\nolimits}

\newcommand{\ci}{
\begin{picture}(6,6)
\put(3,3){\circle*{3}}
\end{picture}}

\newcommand{\ddd}{
\text{\begin{picture}(12,8)
\put(-2,-4){$\cdot$}
\put(3,0){$\cdot$}
\put(8,4){$\cdot$}
\end{picture}}}

\newcommand{\sdotss}%
{\text{\raisebox{-1.8pt}{$\cdot\,$}%
 \raisebox{1.5pt}{$\cdot$}%
\raisebox{4.8pt}{$\,\cdot$}}}

\newtheorem{theorem}{Theorem}[section]
\newtheorem{lemma}[theorem]{Lemma}
\newtheorem{corollary}[theorem]{Corollary}

\theoremstyle{remark}
\newtheorem{remark}[theorem]{Remark}

\maketitle
\begin{abstract}
Canonical matrices of
\begin{itemize}
  \item[(a)]
bilinear and
sesquilinear forms,
\item[(b)] pairs of
forms, in which every
form is symmetric or
skew-symmetric, and
\item[(c)] pairs of
Hermitian forms
\end{itemize}
are given over finite fields of
characteristic $\ne 2$ and over
$\mathfrak p$-adic fields (i.e., finite
extensions of the field $\mathbb Q_p$
of $p$-adic numbers) with $p\ne 2$.

These canonical matrices are special
cases of the canonical matrices of
(a)--(c) over a field of characteristic
not $2$ that were obtained by the
author [{\it Math. USSR--Izv.} 31
(1988) 481--501] up to classification
of quadratic or Hermitian forms over
its finite extensions; we use the known
classification of quadratic and
Hermitian forms over finite fields and
$\mathfrak p$-adic fields.

{\it AMS
classification:}
15A21.

{\it Keywords:}
Bilinear forms,
Sesquilinear forms,
Congruence, Canonical
matrices, Finite
fields, Local fields,
Fields of $p$-adic
numbers.
 \end{abstract}

\section{Introduction}
\label{intr}

We give canonical
matrices of
\begin{itemize}
  \item[(a)]
bilinear and
sesquilinear forms,

  \item[(b)]
pairs of forms in
which every form is
symmetric or
skew-symmetric, and

  \item[(c)]
pairs of Hermitian
forms
\end{itemize}
over
\begin{itemize}
  \item[(i)]
finite fields of
characteristic
different from $2$,
and
  \item[(ii)]
$\mathfrak p$-adic fields (i.e., finite
extensions of the field $\mathbb Q_p$
of $p$-adic numbers); for simplicity,
we take $p\ne 2$.
\end{itemize}

Our canonical matrices are special
cases of the canonical matrices of
(a)--(c) over a field $\mathbb F$ of
characteristic not $2$ that were
obtained in \cite{ser_izv} up to
classification of quadratic or
Hermitian forms over finite extensions
of $\mathbb F$. We use the known
classification of quadratic and
Hermitian forms over finite extensions
of (i) and (ii).

 Analogous canonical
matrices of (a)--(c) could be obtained
over any local field (which is either a
$\mathfrak p$-adic field or the field
of formal power series of one variable
over a finite field) since the
classification of quadratic and
Hermitian forms over local fields is
known.

In Section
\ref{any} we recall
canonical forms of
(a)--(c) obtained in
\cite{ser_izv}.
In Sections
\ref{fin} and
\ref{loc} we give
canonical forms of
(a)--(c) over (i) and
(ii).

\section{Canonical
matrices over any
field of
characteristic not
2}\label{any}

In this section
$\mathbb F$ denotes a
field of
characteristic
different from 2 with
a fixed involution
$\mathbb F\to \mathbb
F$; that is, a
bijection $a\mapsto
\bar{a}$ satisfying
\[\overline{a+b}=\bar
a+ \bar b,\quad
\overline{ab}=\bar a
\bar b,\quad\bar{\bar
a}=a\qquad\text{for
all $a,b\in\mathbb
F$.}\]
We recall
canonical forms of
(a)--(c) obtained in
\cite{ser_izv} by
the method that was
developed by Roiter
and the author in
\cite{roi,ser_first,ser_izv};
it reduces the problem
of classifying systems
of forms and linear
mappings over $\mathbb
F$ to the problems of
classifying
\begin{itemize}
  \item
systems of linear
mappings over $\mathbb F$, and
  \item
quadratic and Hermitian
forms over skew fields that are finite
extensions of $\mathbb
F$.
\end{itemize}
This method was
applied to the problem
of classifying
bilinear and
sesquilinear forms in
\cite{hor-ser,hor-ser_can,h-s_bilin_anyf}
and to the problem of
classifying isometric
operators on vector
spaces with scalar
product given by a
nonsingular quadratic
or Hermitian form in
\cite{ser_iso}.

For any matrix
$A=[a_{ij}]$ over
$\mathbb F$, we write
$
A^*:=\bar{A}^T=[\bar{a}_{ji}].$
Square matrices
$A$ and $B$ are
said to be
\emph{similar} if
$S^{-1}AS=B$,
\emph{congruent} if
$S^TAS=B,$ and
\emph{*congruent} if
$S^*AS=B$ for a
nonsingular
$S$. Pairs
of matrices
$(A_1,A_2)$ and
$(B_1,B_2)$ are
\emph{congruent} if
$S^TA_1S=B_1$ and
$S^TA_2S=B_2$; they
are \emph{*congruent}
if $S^*A_1S=B_1$ and
$S^*A_2S=B_2$ for a
nonsingular
$S$. The
transformations of
congruence ($A\mapsto
S^TAS$) and
*congruence ($A\mapsto
S^*AS$) are associated
with the bilinear form
$x^TAy$ and the
sesquilinear form
$x^*Ay$, respectively.

The involution on
$\mathbb F$ can be the
identity. Thus, we
consider congruence as
a special case of
*congruence.

Every square matrix
$A$ over $\mathbb F$
is similar to a direct
sum, uniquely
determined up to
permutation of
summands, of {\it
Frobenius blocks}
\begin{equation}\label{3}
{\Phi}=\begin{bmatrix}
0&&
0&-c_m\\1&\ddots&&\vdots
\\&\ddots&0&-c_2\\
0&&1& -c_1
\end{bmatrix},
\end{equation}
whose characteristic
polynomial
\[\chi_{\Phi}(x)=p_{\Phi}(x)^l=x^m+c_1
x^{m-1}+\dots+ c_m\]
is an integer power of
a polynomial
$p_{\Phi}(x)$ that is
irreducible over
$\mathbb F$; this
direct sum is called
the \emph{Frobenius
canonical form} or the
\emph{rational
canonical form} of
$A$, see
\cite[Section 6]{b-r}.
If
$\chi_{\Phi}(x)=(x-\lambda
)^m$, then ${\Phi}$ is
similar to the Jordan
block
\begin{equation}\label{fik}
J_m(\lambda
):=\begin{bmatrix}
\lambda&&&
0\\1&\lambda
\\&\ddots&\ddots\\
0&&1&\lambda
\end{bmatrix}\qquad
(m\text{-by-}m).
\end{equation}

For each polynomial
\[
f(x)=a_0x^n+a_1x^{n-1}+\dots
+a_n\in \mathbb F[x],
\]
we define the
polynomials
\begin{align}\label{iut}
\bar f(x)&:=\bar
a_0x^n+\bar
a_1x^{n-1}+\dots+\bar
a_n,\\\label{ksu}
f^{\vee}(x)&:=\bar
a_n^{-1}(\bar
a_nx^n+\dots+\bar
a_1x+\bar
a_0)\quad\text{if }
a_n\ne 0.
\end{align}
In particular,
\begin{equation}\label{ksu1}
f^{\vee}(x)=
a_n^{-1}(
a_nx^n+\dots+
a_1x+
a_0)
\end{equation}
if the involution on $\mathbb F$ is the identity.

The following lemma
was proved in
\cite[Lemma
6]{ser_izv} (or see
\cite{h-s_bilin_anyf,ser_iso}).

\begin{lemma}
\label{LEMMA 7} Let
$\mathbb F$ be a field
with involution
$a\mapsto \bar a$, let
$p(x) = p^{\vee}(x)$
be an irreducible
polynomial over
$\mathbb F$, and consider the
field
\begin{equation}\label{alft}
\mathbb F(\kappa) =
\mathbb
F[x]/p(x)\mathbb
F[x],\qquad \kappa:=
x+p(x)\mathbb F[x],
\end{equation}
with involution
\begin{equation}\label{alfta}
f(\kappa)^{\circ} :=
\bar f(\kappa^{-1}).
\end{equation}
Then each element of\/
$\mathbb F(\kappa)$ on
which the involution
acts identically is
uniquely representable
in the form
$q(\kappa)$, in which
\begin{equation}\label{ser13}
q(x)=a_rx^r+\dots+
a_1x +a_0+\bar
a_1x^{-1}+\dots+\bar
a_rx^{-r},
    \quad a_0 = \bar a_0,
\end{equation}
$r$ is the integer
part of $(\deg
p(x))/2$, $a_0,\dots,
a_r\in\mathbb F,$ and
if $\deg p(x)$ is even
then
\begin{equation*}\label{uvp}
a_r=
  \begin{cases}
    0
&\text{if the
involution
on $\mathbb F$ is the identity}, \\
    \bar a_r
&\text{if the
involution on $\mathbb
F$ is not the identity
and
$p(0)\ne 1$},\\
    -\bar a_r
&\text{if the
involution on $\mathbb
F$ is not the identity
and $p(0)=1$}.
  \end{cases}
\end{equation*}
\end{lemma}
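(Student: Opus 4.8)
The plan is to verify that \eqref{alfta} genuinely defines an involution on $\mathbb F(\kappa)$, and then to reduce the statement to a short computation with the ``symmetric'' spanning set $\kappa^{-r},\dots,\kappa^{r}$ of $\mathbb F(\kappa)$, together with a twisted-linear solvability argument. First I would record what $p=p^{\vee}$ says about the coefficients: writing $p(x)=\sum_{i=0}^{n}p_ix^{i}$ and comparing with \eqref{ksu}, the polynomial $p$ is monic ($p_n=1$), $p_0:=p(0)$ satisfies $p_0\bar p_0=1$, and $\bar p_0\,p_i=\bar p_{\,n-i}$ for all $i$; equivalently $x^{n}\bar p(x^{-1})=\bar p_0\,p(x)$, so $\bar p(\kappa^{-1})=\kappa^{-n}\bar p_0\,p(\kappa)=0$. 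Hence $f\equiv g\pmod p$ implies $\bar f(\kappa^{-1})=\bar g(\kappa^{-1})$, so $f(\kappa)^{\circ}:=\bar f(\kappa^{-1})$ is well defined; additivity, multiplicativity and order two are then routine (using $\overline{\bar f}=f$ and $(\kappa^{-1})^{-1}=\kappa$), and $\kappa^{\circ}=\kappa^{-1}$, $a^{\circ}=\bar a$ for $a\in\mathbb F$.

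Since $\{1,\kappa,\dots,\kappa^{n-1}\}$ is an $\mathbb F$-basis of $\mathbb F(\kappa)$ and $\kappa^{-r}$ is a unit, the elements $\kappa^{-r},\kappa^{-r+1},\dots,\kappa^{r}$ span $\mathbb F(\kappa)$; if $n=2r+1$ they form a basis, and if $n=2r$ there is exactly one linear relation among them, namely $\kappa^{-r}p(\kappa)=\sum_{j=-r}^{r}\rho_j\kappa^{j}=0$ with $\rho_j=p_{j+r}$, so $\rho_r=p_n=1$ and $\rho_{-j}=p_0\,\overline{\rho_j}$. When $n$ is odd, every $v$ is \emph{uniquely} $\sum_{k=-r}^{r}b_k\kappa^{k}$, while $v^{\circ}=\sum_k\overline{b_{-k}}\,\kappa^{k}$; by uniqueness $v^{\circ}=v$ iff $b_k=\overline{b_{-k}}$ for every $k$, and putting $a_j=b_j$ for $0\le j\le r$ gives precisely \eqref{ser13} with $a_0=\bar a_0$ forced and no condition on $a_r$. (For the identity involution this case occurs only when $n=1$, where $r=0$ and there is nothing to do.)

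When $n=2r$ is even, write $v=\sum_{k=-r}^{r}b_k\kappa^{k}$, the ambiguity being the addition of $t\rho$ with $t\in\mathbb F$. For any such expression the defect $\delta_k:=b_k-\overline{b_{-k}}$ satisfies $\delta_{-k}=-\overline{\delta_k}$, and $v^{\circ}=v$ forces $\sum_k\delta_k\kappa^{k}=0$, hence $\delta=s\rho$ for a scalar $s$; together with $\delta_{-k}=-\overline{\delta_k}$ and $\rho_{-j}=p_0\overline{\rho_j}$ this gives $\bar s+sp_0=0$. Passing from $b$ to $b+t\rho$ replaces $\delta$ by $(s+t-\bar t\bar p_0)\rho$, so the defect can be killed iff $t-\bar t\bar p_0=-s$ is solvable; the map $t\mapsto t-\bar t\bar p_0$ has image exactly $\{w:\bar w+wp_0=0\}$ (this uses $p_0\bar p_0=1$, with surjectivity by an $\mathbb F_0$-dimension count, $\mathbb F_0:=\{a\in\mathbb F:\bar a=a\}$ — via Hilbert's Theorem 90 for $\mathbb F/\mathbb F_0$ applied to $\pm p_0$ when the involution is nontrivial, while for the identity involution $p_0=1$ and $s=0$ automatically), so it is solvable. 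Among the solutions (a coset of $\ker$) the coefficient $a_r:=b_r+t$ runs over a coset of that kernel, and since $\operatorname{char}\mathbb F\ne 2$ gives $\mathbb F=\mathbb F_0\oplus\ker$, this coset meets the prescribed target set for $a_r$ — namely $\{0\}$, $\mathbb F_0$, or $\{w:\bar w=-w\}$ according to whether the involution is trivial, nontrivial with $p_0\ne 1$, or nontrivial with $p_0=1$ — in exactly one point. This proves existence and uniqueness of \eqref{ser13}, and these three alternatives coincide with those in the statement because for even $\deg p$ one has $p_0=1$ whenever the involution on $\mathbb F$ is the identity (otherwise $x=1$ would be a root of $p$).

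I expect the even-degree case to be the main obstacle: pinning down the unique relation $\kappa^{-r}p(\kappa)=0$ and its conjugation symmetry $\rho_{-j}=p_0\overline{\rho_j}$, and then solving $t-\bar t\bar p_0=-s$ so as to enforce \emph{simultaneously} the Hermitian symmetry $b_{-k}=\overline{b_k}$ and the normalization of $a_r$, in a way that reproduces exactly the three cases of the last display of the statement. Step~1 and the odd-degree case are routine.
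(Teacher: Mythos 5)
The paper itself does not prove Lemma~\ref{LEMMA 7}; it cites \cite[Lemma 6]{ser_izv} (with alternative sources \cite{h-s_bilin_anyf,ser_iso}), so there is no in-paper argument to compare against. Your self-contained proof is correct. The key steps all check out: the identity $x^{n}\bar p(x^{-1})=\bar p_{0}p(x)$ gives well-definedness of the involution \eqref{alfta}; the symmetric spanning set $\kappa^{-r},\dots,\kappa^{r}$ is a basis for odd $\deg p$ and carries the single relation $\rho$ with $\rho_{r}=1$ and $\rho_{-j}=p_{0}\overline{\rho_{j}}$ for even $\deg p$; the defect equation $\delta=s\rho$ with $\bar s+sp_{0}=0$ and the killing condition $L(t):=t-\bar t\bar p_{0}=-s$ are the right reduction; and the observation that $p(0)=1$ whenever the involution is trivial and $\deg p$ is even (else $p(1)=0$) matches the statement's three cases. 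Two places could be tightened. First, for the surjectivity of $L$ onto $W:=\{w:\bar w+wp_{0}=0\}$ a pure $\mathbb F_{0}$-dimension count suffices and avoids Hilbert 90: in the nonidentity case $L\ne 0$ (else $\bar p_{0}=1$ on taking $t=1$, hence the identity involution), so $0\ne\operatorname{im}L\subseteq W\subsetneq\mathbb F$ with both $\operatorname{im}L$ and $W$ one-dimensional over $\mathbb F_{0}$, forcing equality. Second, the line ``$\mathbb F=\mathbb F_{0}\oplus\ker$'' is ambiguous; what the uniqueness of the normalized $a_{r}$ actually requires is $\mathbb F=T\oplus\ker L$, where $T$ is the target set for $a_{r}$ (namely $\{0\}$, $\mathbb F_{0}$, or $\{w:\bar w=-w\}$), and this decomposition needs a separate one-line check in each of the three cases (e.g.\ for a nontrivial involution with $p_{0}\ne 1$, $t\in\mathbb F_{0}\cap\ker L$ gives $t=\bar t=\bar t\bar p_{0}$, hence $t(1-\bar p_{0})=0$, hence $t=0$). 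With these clarifications the argument is complete.
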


For each square matrix $\Phi$
and
\[
\varepsilon=
  \begin{cases}
    1\text{ or }-1, & \text{if
the involution on
$\mathbb F$ is the
identity}, \\
    1, & \text{if
the involution on
$\mathbb F$ is
nonidentity},
  \end{cases}
\]
denote
by
$\sqrt[\displaystyle
*]{\Phi}$ and
$\Phi_{\varepsilon}$
fixed nonsingular
matrices (if they exist) such that
\begin{gather}\label{vr1}
\sqrt[\displaystyle
*]{\Phi}=(\sqrt[\displaystyle
*]{\Phi})^*\Phi,
   \\ \label{vr2}
\Phi_{\varepsilon}
=\Phi_{\varepsilon}^*,\qquad
\Phi_{\varepsilon}\Phi=
\varepsilon
(\Phi_{\varepsilon}\Phi)^*.
\end{gather}
We use the notation $\sqrt[\displaystyle
*]{\Phi}$ both in the case of nonidentity involution and in the case of the identity involution on $\mathbb F$, but if we know that the involution is the identity then we prefer  to write
$\sqrt[T]{\Phi}$
instead of
$\sqrt[\displaystyle
*]{\Phi}$.

It suffices
to construct
$\sqrt[\displaystyle
*]{\Phi}$ and
$\Phi_{\varepsilon}$
for canonical matrices
$\Phi$ under similarity since if $\Psi
=S^{-1}\Phi S$ then we
can take
\begin{equation*}\label{ndw}
\sqrt[\displaystyle
*]{\Psi}
=S^*\sqrt[\displaystyle
*]{\Phi}S,\qquad
\Psi_{\varepsilon}
=S^*\Phi_{\varepsilon}S.
\end{equation*}
Existence conditions
and explicit forms of
$\sqrt[\displaystyle
*]{\Phi}$ and
$\Phi_{\varepsilon}$
for all Frobenius
blocks $\Phi$ will be
given in Lemmas
\ref{lsdy1} and
\ref{THEjM}.

Define the \textit{skew sum} of two matrices
\[
\lbrack A\,\diagdown\,B]:=%
\begin{bmatrix}
0 & B\\
A & 0
\end{bmatrix}
.
\]

\begin{theorem}[{\cite[Theorem
3]{ser_izv}; see also
\cite[Theorem
2.2]{h-s_bilin_anyf}}]
\label{Theorem 5}

\begin{itemize}
  \item[{\rm(a)}] Let\/
$\mathbb F$ be a field
of characteristic
different from $2$
with involution
$($which can be the
identity$)$. Every
square matrix $A$
over\/ $\mathbb F$ is
*congruent to a direct
sum of matrices of the
following types:
\begin{itemize}
 \item [{\rm(i)}]
$J_n(0)$;

 \item [{\rm(ii)}]
$[\Phi\diag I_n]$,
where $\Phi$ is an
$n\times n$
nonsingular Frobenius
block such that
$\sqrt[\displaystyle
*]{\Phi}$ does not
exist $($see Lemma
{\rm\ref{lsdy1}}$)$;
and

 \item [{\rm(iii)}]
$\sqrt[\displaystyle
*]{\Phi}q(\Phi)$,
where $\Phi$ is a
nonsingular Frobenius
block such that
$\sqrt[\displaystyle
*]{\Phi}$ exists and
$q(x)\ne 0$ has the
form \eqref{ser13}
from Lemma \ref{LEMMA
7} in which
$p(x)=p_{\Phi}(x)$ is
the irreducible
divisor of the
characteristic
polynomial of $\Phi$.
\end{itemize}

  \item[{\rm(b)}] The summands
are determined to the
following extent:
\begin{description}
  \item [Type (i)]
uniquely.

  \item [Type (ii)]
up to replacement of
$\Phi$ by the
Frobenius block
$\Psi$ that is similar
to $\Phi^{-*}$
$($i.e., whose
characteristic
polynomial is
$\chi_{\Phi}^{\vee}(x)$, see \eqref{ksu}$)$.

  \item [Type (iii)]
up to replacement of
the whole group of
summands
\[
\sqrt[\displaystyle
*]{\Phi}q_1(\Phi)
\oplus\dots\oplus
\sqrt[\displaystyle
*]{\Phi}q_s(\Phi)
\]
with the same $\Phi$
by
\[
\sqrt[\displaystyle
*]{\Phi}q'_1(\Phi)
\oplus\dots\oplus
\sqrt[\displaystyle
*]{\Phi}q'_s(\Phi)
\]
in which each
$q'_i(x)$ is a nonzero
function of the form
\eqref{ser13} and the
Hermitian forms
\begin{gather*}\label{777}
q_1(\kappa)x_1^{\circ}y_1+\dots+
q_s(\kappa)x_s^{\circ}y_s,
\\\label{777s}
q'_1(\kappa)x_1^{\circ}y_1+\dots+
q'_s(\kappa)x_s^{\circ}y_s
\end{gather*}
are equivalent over
the field \eqref{alft}
with involution
\eqref{alfta}.
\end{description}

  \item[{\rm(c)}]
Frobenius blocks in {\rm(a)} and
{\rm(b)} can be replaced by arbitrary matrices that are similar to them $($for example, by Jordan blocks if\/ $\mathbb F$ is algebraically closed$)$.
\end{itemize}
\end{theorem}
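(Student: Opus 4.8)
The plan is to view $A$ as the Gram matrix $x^{*}Ay$ of a sesquilinear form $F$ on $V=\mathbb F^{\,n}$, so that $*$congruence of matrices becomes isometry of forms, and then to carry out the three-stage reduction of \cite{ser_izv}: peel off a purely singular summand, classify the nonsingular remainder by its \emph{cosquare}, and parametrize the fibre over each cosquare by a Hermitian form over an associated field. \emph{Step 1 (regularization).} First I would produce a $*$congruence $A\cong A'\oplus N$ with $A'$ nonsingular and $N$ a direct sum of blocks $J_{n_1}(0)\oplus\dots\oplus J_{n_k}(0)$ --- the type~(i) summands --- and show that the multiset $\{n_1,\dots,n_k\}$ is an invariant of $*$congruence. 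Using the right radical $\ker A$ and the left radical $\ker A^{*}$ of $F$, an induction on the nilpotent action of $A$ relative to these radicals produces a decomposition $V=V'\oplus V''$ in which $F|_{V'}$ is nonsingular and $F|_{V''}$ has Gram matrix $J_{n_1}(0)\oplus\dots\oplus J_{n_k}(0)$; the $n_i$ are then read off from dimensions of iterated kernels attached to $F$, which are invariants, so $N$ is unique. This is the stage at which one invokes the reduction theorem of Roiter and the author \cite{roi,ser_first,ser_izv}: after regularization the classification of $F$ splits into classifying a system of linear maps --- contributing only rank data here --- and classifying Hermitian forms over the endomorphism division rings of the indecomposable constituents.

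\emph{Step 2 (the cosquare).} Now let $A$ be nonsingular and put $\Phi_A:=A^{-*}A$. If $B=S^{*}AS$ then $\Phi_B=S^{-1}\Phi_AS$, so the similarity class of $\Phi_A$ is an invariant of $*$congruence; note that $\sqrt[\displaystyle *]{\Phi}$ exists precisely when $\Phi$ is itself a cosquare, $\Phi=X^{-*}X$ for a nonsingular $X$, the explicit conditions on Frobenius blocks being recorded in Lemma \ref{lsdy1}. A short computation gives $\Phi_A^{-*}=A\Phi_AA^{-1}$, so $\Phi_A^{-*}$ is similar to $\Phi_A$; equivalently, the multiset of Frobenius blocks of $\Phi_A$ is stable under $\Phi\mapsto\Phi^{-*}$, which on characteristic polynomials is the operation $p(x)\mapsto p^{\vee}(x)$ of \eqref{ksu}. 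This forces the dichotomy in the theorem. A Frobenius block $\Phi$ with $p_\Phi\ne p_\Phi^{\vee}$ occurs in $\Phi_A$ only glued to its partner $\Phi^{-*}$, and the cosquare of $[\Phi\diag I_n]$ works out to $\Phi\oplus\Phi^{-*}$ (with $\chi_{\Phi^{-*}}=\chi_\Phi^{\vee}$), which gives the type~(ii) summands; here $\sqrt[\displaystyle *]{\Phi}$ does not exist, consistently with the criterion above. Frobenius blocks with $p_\Phi=p_\Phi^{\vee}$, for which $\sqrt[\displaystyle *]{\Phi}$ does exist, remain.

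\emph{Step 3 (the fibre over a self-reciprocal cosquare).} Fix a nonsingular Frobenius block $\Phi$ with $p:=p_\Phi=p^{\vee}$ and a choice of $\sqrt[\displaystyle *]{\Phi}$. I would show that the nonsingular matrices $A$ whose cosquare $\Phi_A$ is similar to a direct sum of copies of $\Phi$ are classified, up to $*$congruence, by the nonsingular Hermitian forms over the field $\mathbb F(\kappa)=\mathbb F[x]/p(x)\mathbb F[x]$ with the involution $f(\kappa)^{\circ}=\bar f(\kappa^{-1})$ of Lemma \ref{LEMMA 7}. One first brings such an $A$ by a $*$congruence to the shape $\sqrt[\displaystyle *]{\Phi}\,h(\Phi)$ with $h$ a function of type \eqref{ser13} --- for several copies of $\Phi$, to $\sqrt[\displaystyle *]{\Phi'}$ times a block-diagonal polynomial in $\Phi'=\Phi\oplus\dots\oplus\Phi$ --- and then, using $\sqrt[\displaystyle *]{\Phi}=(\sqrt[\displaystyle *]{\Phi})^{*}\Phi$ from \eqref{vr1}, a $*$congruence $A\mapsto S^{*}AS$ becomes the substitution $h(\kappa)\mapsto g(\kappa)^{\circ}h(\kappa)g(\kappa)$ over $\mathbb F(\kappa)$, that is, Hermitian equivalence of the forms $q_1(\kappa)x_1^{\circ}y_1+\dots+q_s(\kappa)x_s^{\circ}y_s$ of part~(b). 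Lemma \ref{LEMMA 7} then supplies a unique normalized $q(\kappa)$ of shape \eqref{ser13} in each class, giving the canonical representative $\sqrt[\displaystyle *]{\Phi}\,q(\Phi)$ of type~(iii); the freedom left in the diagonalization is exactly the ``up to equivalence of the two Hermitian forms'' clause of~(b). When $p_\Phi\ne p_\Phi^{\vee}$ the same analysis degenerates: the $\Phi$- and $\Phi^{-*}$-primary parts of $V$ are each totally isotropic for $F$ and paired nondegenerately with one another, so the form is rigid, $[\Phi\diag I_n]$ is its only representative, and the one remaining ambiguity is the swap $\Phi\leftrightarrow\Phi^{-*}$.

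Assembling Steps~1--3 yields existence in (a) together with the uniqueness assertions in (b). Part (c) is then formal: Frobenius blocks entered only as representatives of similarity classes, so if $\Psi=S^{-1}\Phi S$ one replaces $\sqrt[\displaystyle *]{\Phi}$ by $S^{*}\sqrt[\displaystyle *]{\Phi}S$ and $q(\Phi)$ by $S^{-1}q(\Phi)S$ and may use any matrices similar to them --- Jordan blocks when $\mathbb F$ is algebraically closed. I expect the main obstacle to be Step~3: establishing the bijection between $*$congruence classes lying over a fixed self-reciprocal cosquare and equivalence classes of Hermitian forms over $\mathbb F(\kappa)$, and, hand in hand with it, the honest regularization of Step~1. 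Both are the substance of the reduction of Roiter and the author, while the remainder is bookkeeping with the operations ${}^{\vee}$ and ${}^{\circ}$.
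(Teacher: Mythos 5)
The paper does not prove Theorem \ref{Theorem 5} itself; it is recalled from \cite{ser_izv} and \cite{h-s_bilin_anyf}, so there is no internal proof to compare against. Your three-stage plan (regularization, cosquare classification, then parametrizing the fibre over a fixed cosquare by a Hermitian form over $\mathbb F(\kappa)$) is indeed the strategy of those references, and most of the outline is consistent with it. There is, however, one concrete gap in Step~2.

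You first observe, correctly, that $\sqrt[\displaystyle *]{\Phi}$ exists precisely when $\Phi$ is a cosquare, ``the explicit conditions on Frobenius blocks being recorded in Lemma~\ref{lsdy1}.'' A few sentences later you state: ``Frobenius blocks with $p_\Phi=p_\Phi^{\vee}$, for which $\sqrt[\displaystyle *]{\Phi}$ does exist, remain.'' This is not the criterion of Lemma \ref{lsdy1}(a), which requires \emph{both} $p_\Phi = p_\Phi^{\vee}$ \emph{and}, when the involution on $\mathbb F$ is the identity, $p_\Phi(x)\ne x+(-1)^{n+1}$. In that exceptional case one has $p_\Phi=p_\Phi^{\vee}$ (so $\Phi^{-*}$ is similar to $\Phi$) yet $\sqrt[\displaystyle *]{\Phi}$ fails to exist, and such $\Phi$ still gives a type~(ii) summand $[\Phi\diag I_n]$, not a type~(iii) one. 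Your pairing picture (``occurs only glued to its partner $\Phi^{-*}$'') must then be adjusted: the partner is a copy of $\Phi$ itself, so the block appears with even multiplicity in the cosquare without contributing a Hermitian-form parameter. This is exactly why the paper's finite-field Theorem~\ref{Th5} and $\mathfrak p$-adic Theorem~\ref{Theorem 5f} carve out $p_\Phi(x)=x+(-1)^{n+1}$ in their type~(ii), and why the note following Theorem~\ref{Th5} records that existence of $\sqrt[T]{\Phi}$ forces $\deg p_\Phi$ even or $p_\Phi(x)=x+(-1)^{n}$. The dichotomy driving Step~2 should be ``$\sqrt[\displaystyle *]{\Phi}$ exists or not,'' with the existence criterion taken from Lemma~\ref{lsdy1}, rather than ``$p_\Phi=p_\Phi^{\vee}$ or not.''
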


Define
the $(n-1)\times n$
matrices
\begin{equation}\label{ser17}
F_n:=\begin{bmatrix}
1&0&&0\\&\ddots&\ddots&\\0&&1&0
\end{bmatrix},\quad
G_n:=\begin{bmatrix}
0&1&&0\\&\ddots&\ddots&\\0&&0&1
\end{bmatrix}
\end{equation}
for each
$n=1,2,\dots$, and
define the \textit{direct sum} of two matrix pairs:
\[
(A_{1},B_{1})\oplus(A_{2},B_{2}):=(A_{1}\oplus A_{2},\,B_{1}\oplus
B_{2}).
\]

\begin{theorem}[{\cite[Theorem
4]{ser_izv}}]
\label{The4}
\begin{itemize}
  \item[{\rm(a)}]
Let\/ $\mathbb F$ be a
field of
characteristic
different from $2$
with involution $($which can be the identity$)$. Let
$A$ and $B$ be
$\varepsilon$-Hermitian
and $\delta$-Hermitian
matrices over $\mathbb
F$ of the same size:
\begin{equation*}\label{lut}
A^*=\varepsilon
A,\qquad B^*=\delta B,
\end{equation*}
in which
\[
(\varepsilon,\delta)=
  \left\{\begin{array}{l}
  \!\!(1,1), \text{ if the
involution on $\mathbb
F$ is nonidentity},
\\
  \!\!   (1,1)\text{ or }(1,-1)\text{ or }(-1,-1) , \text{ otherwise}.
\end{array}
\right.
\]
Then $(A,B)$ is
*congruent to a direct
sum of matrix pairs of
the following types:
\begin{itemize}
  \item[\rm(i)]
$([F_n\diag
\varepsilon F_n^*],\,
[G_n\diag \delta
G_n^*])$, in which
$F_n$ and $G_n$ are
defined in
\eqref{ser17};

  \item[\rm(ii)]
$([I_n\diag
\varepsilon I_n],\,
[\Phi\diag \delta
\Phi^*])$, in which
$\Phi$ is an $n\times
n$ Frobenius block
such that
$\Phi_{\delta}$ $($see
\eqref{vr2}$)$ does
not exist if
$\varepsilon=1$;

  \item[\rm(iii)]
$A_{\Phi}^{f(x)}:=
(\Phi_{\delta},
\Phi_{\delta}\Phi)f(\Phi)$ only if $\varepsilon=1$,
in which
$0\ne
f(x)=\bar f(\delta
x)\in \mathbb F[x]$ $($see \eqref{iut}$)$,
and
$\deg(f(x))<\deg(p_{\Phi}(x))$;

  \item[\rm(iv)]
$([J_n(0)\diag
\varepsilon J_n(0)^*],
[I_n\diag (-I_n)])$ only if
$\delta=-1$,
in which $n$ is odd if
$\varepsilon=1$;

  \item[\rm(v)]
\begin{equation}\label{ser120}
\arraycolsep=4.4pt
B_n^a:=\left(\!
 a\!\begin{bmatrix}
0&&&&&1&0\\
&&&&\delta&\cdot&\\
&&&1&\cdot&&\\
&&\delta&\cdot&&&\\
&\cdot&\cdot&&&&\\
\cdot&\cdot&&&&&\\
0&&&&&&0
\end{bmatrix},\
a\!\begin{bmatrix}
0&&&&&&1\\
&&&&&\delta&\\
&&&&1&&\\
&&&\delta&&&\\
&&\cdot&&&&\\
&\cdot&&&&&\\
\cdot&&&&&&0
\end{bmatrix}
 \!\right),
\end{equation}
in which the matrices
are $n$-by-$n$,
$\varepsilon=1$, $0\ne
a=\bar a\in \mathbb
F$, and $n$ is even if
$\delta=-1$.
\end{itemize}

  \item[{\rm(b)}] The summands
are determined to the
following extent:
\begin{description}
  \item [Type (i)]
uniquely.

  \item [Type (ii)]
up to replacement of
$\Phi$ by the Frobenius block $\Psi$ with
$\chi_{\Psi}(x)
=(\varepsilon
\delta)^{\det \chi_{\Phi}} \bar\chi_{\Phi}(\varepsilon
\delta x)$.

  \item [Type (iii)]
up to replacement of
the whole group of
summands
\[
A_{\Phi}^{f_1(x)}
\oplus\dots\oplus
A_{\Phi}^{f_s(x)}
\]
with the same $\Phi$
by
\[
A_{\Phi}^{g_1(x)}
\oplus\dots\oplus
  A_{\Phi}^{g_s(x)}
\]
such that the
Hermitian forms
\begin{gather*}\label{777aki}
f_1(\omega)x_1^{\circ}y_1+\dots+
f_s(\omega)x_s^{\circ}y_s,
\\\label{777sopja}
g_1(\omega)x_1^{\circ}y_1+\dots+
g_s(\omega)x_s^{\circ}y_s
\end{gather*}
are equivalent over
the field\/ ${ \mathbb
F}(\omega)={ \mathbb
F}[x]/p_{\Phi}(x){\mathbb
F}[x]$ with involution
$f(\omega)^{\circ}=
\bar f(\delta\omega)$.

  \item [Type (iv)]
uniquely.

  \item [Type (v)]
up to replacement of
the whole group of
summands
\begin{equation}\label{tek}
B_n^{a_1}
\oplus\dots\oplus
B_n^{a_s}
\end{equation}
with the same $n$ by
\begin{equation}\label{ftl}
B_n^{b_1}
\oplus\dots\oplus
B_n^{b_s}
\end{equation}
such that the
Hermitian forms
\begin{gather*}\label{777a}
a_1\bar x_1y_1+\dots+
a_s\bar x_sy_s,
\\\label{777sa}
b_1\bar x_1y_1+\dots+
b_s\bar x_sy_s
\end{gather*}
are equivalent over\/
${\mathbb F}$.
\end{description}

  \item[{\rm(c)}] Frobenius blocks in {\rm(a)} and
{\rm(b)} can be replaced by arbitrary matrices that are similar to them $($for example, by Jordan blocks if\/ $\mathbb F$ is algebraically closed$)$.
\end{itemize}
\end{theorem}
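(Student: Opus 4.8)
The plan is to prove Theorem~\ref{The4} by the reduction method of \cite{roi,ser_first,ser_izv} that underlies Theorem~\ref{Theorem 5}. It is worth recording first that one case reduces \emph{directly} to Theorem~\ref{Theorem 5}: since $\operatorname{char}\mathbb F\ne2$, the map $M\mapsto(M+M^*,\,M-M^*)$ is a bijection between square matrices over $\mathbb F$ and pairs (Hermitian matrix, skew-Hermitian matrix) that intertwines *congruence, so the case $(\varepsilon,\delta)=(1,-1)$ of Theorem~\ref{The4} is equivalent to Theorem~\ref{Theorem 5}; and when the involution is nonidentity the substitution $B\mapsto iB$ with $\bar i=-i$ further identifies $(\varepsilon,\delta)=(1,1)$ with $(1,-1)$. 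So for a nonidentity involution the assertion already follows from Theorem~\ref{Theorem 5} by translating its three families of summands, and only the cases $(1,1)$ and $(-1,-1)$ with the identity involution are genuinely new; I would treat them — and in fact all cases, uniformly — by the reduction below.

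Regarding $(A,B)$ up to simultaneous *congruence as a pencil $\lambda A+\mu B$ of forms, I would apply the reduction of \cite{ser_izv}: it writes $(A,B)$ as an orthogonal direct sum of pieces built from the indecomposables of the form-free problem — pairs of linear maps, that is, Kronecker matrix pencils — equipped with the duality coming from the conditions $A^*=\varepsilon A$, $B^*=\delta B$. The Kronecker indecomposables are the singular blocks $L_n$, their transposes $L_n^T$, and the regular blocks of the Frobenius blocks $\Phi$ (with eigenvalue $\infty$ allowed); the duality interchanges $L_n\leftrightarrow L_n^T$ and carries the regular block of $\Phi$ to that of the Frobenius block $\Psi$ with $\chi_\Psi(x)=(\varepsilon\delta)^{\deg\chi_\Phi}\bar\chi_\Phi(\varepsilon\delta x)$, cf.\ part~(b)(ii). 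For \emph{existence}: each two-element orbit $\{U,U'\}$ of a non-self-dual indecomposable contributes one ``hyperbolic'' summand $U\oplus U'$ with the canonical pairing of $U$ against its dual $U'$; this yields the type-(i) blocks (from $U=L_n$) and, whenever the matrix $\Phi_\delta$ of \eqref{vr2} does not exist — which includes every $\Phi$ not similar to $\Psi$; see Lemmas~\ref{lsdy1}--\ref{THEjM} — the type-(ii) blocks $([I_n\diag\varepsilon I_n],[\Phi\diag\delta\Phi^*])$. Each self-dual indecomposable $U$, necessarily a regular block, contributes the nondegenerate forms on $U$ modulo $\operatorname{Aut}(U)$; by the general theory these are classified by Hermitian forms — quadratic forms when $T_U:=\End(U)/\operatorname{rad}\End(U)$ is a field with trivial induced involution — over $T_U$ with its induced involution. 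For a generic such block $T_U=\mathbb F(\omega)=\mathbb F[x]/p_\Phi(x)\mathbb F[x]$ with involution $f(\omega)^\circ=\bar f(\delta\omega)$, and diagonalizing these Hermitian forms gives the type-(iii) summands $A_\Phi^{f(x)}=(\Phi_\delta,\Phi_\delta\Phi)f(\Phi)$ with $f$ as in \eqref{iut}; the self-dual regular blocks at the exceptional eigenvalues — those fixed by $\lambda\mapsto\varepsilon\delta\bar\lambda$ and those at which $\Phi_\delta$ is obstructed — produce, according to the signs $\varepsilon$ and $\delta$, the type-(iv) blocks $([J_n(0)\diag\varepsilon J_n(0)^*],[I_n\diag(-I_n)])$ (only when $\delta=-1$) and the type-(v) blocks $B_n^a$ of \eqref{ser120} (only when $\varepsilon=1$), the parity and sign conditions on (iv) and (v) being precisely the conditions — read off from $\operatorname{Aut}(U)$ — for a nondegenerate form of that shape to exist on $U$.

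Part (b) is the uniqueness half of the same reduction: the form-free part of $(A,B)$ is determined up to the Krull--Schmidt uniqueness of Kronecker pencils, i.e.\ up to the duality's action $\Phi\leftrightarrow\Psi$ on regular blocks — whence the type-(ii) clause, the $L_n$- and $J_n(0)$-type summands being rigid and hence giving the ``uniquely'' clauses for types (i) and (iv) — while the form datum of each self-dual $U$ is pinned down only up to equivalence of its Hermitian form over $T_U$, which specializes to the type-(iii) clause (for $T_U=\mathbb F(\omega)$) and to the type-(v) clause (for $T_U=\mathbb F$); the Witt-type cancellation needed to exclude further identifications is part of the general machinery. Part (c) is immediate, since within a summand the simultaneous replacement of $\Phi$ by $S^{-1}\Phi S$ and of $\sqrt[\displaystyle *]{\Phi}$, $\Phi_\varepsilon$ by $S^*\sqrt[\displaystyle *]{\Phi}S$, $S^*\Phi_\varepsilon S$ is a *congruence. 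The step I expect to be the main obstacle — the only one that is not routine bookkeeping over the output of the reduction — is the analysis at the exceptional eigenvalues: for each admissible $(\varepsilon,\delta)$ one must determine which regular blocks admit no $\Phi_\delta$ and what $\End(U)/\operatorname{rad}\End(U)$ together with its induced involution is in the degenerate cases, so as to fix the precise shapes and parity conditions of the type-(iv) and type-(v) summands and to verify their indecomposability and that the listed ambiguities are the only ones.
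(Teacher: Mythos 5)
The statement you are proving is not proved in this paper at all: Theorem~\ref{The4} is stated with the citation ``\cite[Theorem 4]{ser_izv}'' and used as a black box. So there is no in-paper proof to compare against; what the paper does supply, in the introduction to Section~\ref{any}, is a one-paragraph description of the reduction method from \cite{roi,ser_first,ser_izv} (systems of forms $\to$ systems of linear mappings $+$ Hermitian forms over endomorphism rings of indecomposables), and your sketch is a faithful high-level account of exactly that method. Your opening observation is also genuinely useful and, as far as I can tell, not made explicit in the paper: the bijection $M\mapsto(M+M^{*},\,M-M^{*})$ intertwines *congruence with simultaneous *congruence, so the $(\varepsilon,\delta)=(1,-1)$ case of Theorem~\ref{The4} is literally equivalent to Theorem~\ref{Theorem 5}, and for a nonidentity involution the rescaling $B\mapsto iB$ with $\bar\imath=-i$ (which exists since $\operatorname{char}\mathbb F\ne2$) identifies $(1,1)$ with $(1,-1)$ as well. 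This is a legitimate shortcut that reduces the entire nonidentity case of Theorem~\ref{The4} to Theorem~\ref{Theorem 5}, though one would still have to translate the three families of canonical summands across the bijection, which is more work than ``already follows'' suggests.

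The genuine gap is the one you name yourself in the closing sentence, and it is not a minor bookkeeping item: it is where the specific content of parts (a)(iv), (a)(v), and the corresponding clauses of (b) lives. The abstract Krull--Schmidt/duality framework tells you \emph{that} each self-dual indecomposable $U$ contributes Hermitian forms over $T_U=\End(U)/\operatorname{rad}\End(U)$ with the induced involution, but it does not by itself produce the explicit shapes $B_n^a$ of \eqref{ser120} or $([J_n(0)\diag\varepsilon J_n(0)^*],[I_n\diag(-I_n)])$, nor the conditions ``only if $\delta=-1$,'' ``only if $\varepsilon=1$,'' ``$n$ odd if $\varepsilon=1$,'' ``$n$ even if $\delta=-1$.'' Those come out of a case-by-case computation at the degenerate orbits (the eigenvalues $0$ and $\infty$ of the pencil and the fixed points of $\lambda\mapsto\varepsilon\delta\bar\lambda$), together with verifying when a nondegenerate $\varepsilon$-Hermitian/$\delta$-Hermitian pairing can be carried on the corresponding Kronecker block. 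Your sketch asserts that these constraints can be ``read off from $\operatorname{Aut}(U)$'' without carrying out the computation, and likewise waves the uniqueness assertions in (b) through as ``part of the general machinery'' (the Witt-type cancellation over each $T_U$ and the proof that types (i) and (iv) are rigid). Until those two pieces are written out, what you have is a correct plan in the style of \cite{ser_izv}, not a proof; the reductions you do give (to Kronecker pencils with duality, and the $(1,-1)\leftrightarrow{}$Theorem~\ref{Theorem 5} identification) are sound, but the hard part of the theorem is exactly the part you postpone.
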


Taking $\varepsilon =
\delta=-1$ in Theorem
\ref{The4}, we obtain
the following
well-known canonical
form of pairs
skew-symmetric
matrices; see, for
example, \cite{r.sch,tho}.

\begin{corollary}
\label{Theorem 4} Over
any field of
characteristic not
$2$, each pair of
skew-symmetric
matrices of the same
size is congruent to a
direct sum, uniquely
determined up to
permutation of
summands, of pairs of
the form:
\begin{itemize}
  \item[\rm(i)]
$([F_n\diag -
F_n^T],\, [G_n\diag -
G_n^T])$, in which $F_n$
and $G_n$ are defined
in \eqref{ser17};

  \item[\rm(ii)]
$([I_n\diag - I_n],\,
[\Phi\diag -
\Phi^T])$,  in which $\Phi$ is an $n\times
n$ Frobenius block;

  \item[\rm(iii)]
$([J_n(0)\diag -
J_n(0)^T], [I_n\diag
-I_n])$.
\end{itemize}
\end{corollary}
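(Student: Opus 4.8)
The plan is to derive Corollary \ref{Theorem 4} by specializing Theorem \ref{The4} to the case where the involution is the identity and $(\varepsilon,\delta)=(-1,-1)$, so that both $A$ and $B$ are skew-symmetric matrices. First I would go type by type through the list (i)--(v) of Theorem \ref{The4}(a) and record what each becomes under this specialization. Type (i) becomes $([F_n\diag -F_n^T],[G_n\diag -G_n^T])$, which is exactly item (i) of the corollary. For type (ii), since $\varepsilon=1$ is false here, the condition ``$\Phi_\delta$ does not exist'' is vacuous, so every Frobenius block $\Phi$ is allowed, and the pair becomes $([I_n\diag -I_n],[\Phi\diag -\Phi^T])$, which is item (ii). Type (iii) occurs ``only if $\varepsilon=1$'', hence is absent. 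Type (iv) occurs ``only if $\delta=-1$'', which holds; with $\varepsilon=-1$ there is no parity restriction on $n$, and the pair $([J_n(0)\diag -J_n(0)^T],[I_n\diag -I_n])$ is item (iii). Type (v) has $\varepsilon=1$ in its statement, so it is absent.

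Next I would address the uniqueness claim. The corollary asserts the decomposition is unique up to permutation of summands, so I must check that the three ambiguities in Theorem \ref{The4}(b) all collapse. Type (i) and type (iv) summands are already unique by (b). For type (ii), the replacement is $\Phi\rightsquigarrow\Psi$ with $\chi_\Psi(x)=(\varepsilon\delta)^{\deg\chi_\Phi}\bar\chi_\Phi(\varepsilon\delta x)$; with $\varepsilon\delta=1$ and the identity involution this reads $\chi_\Psi(x)=\bar\chi_\Phi(x)=\chi_\Phi(x)$, so $\Psi$ is similar to $\Phi$ and the ``ambiguity'' is trivial — a Frobenius block is determined by its characteristic polynomial. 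The type (iii) ambiguity is moot since no type (iii) summands appear. This accounts for all the book-keeping; it remains only to observe that the symmetry condition ``$A^*=\varepsilon A$, $B^*=\delta B$'' with the identity involution and $\varepsilon=\delta=-1$ is precisely the statement that $A,B$ are skew-symmetric, and conversely any pair of skew-symmetric matrices satisfies the hypotheses of Theorem \ref{The4}(a).

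I expect the only genuinely non-routine point to be verifying that no parity or existence side-conditions silently remove summands that the corollary claims are present — in particular, confirming that in type (iv) there is no constraint on $n$ once $\varepsilon=-1$ (the constraint ``$n$ odd'' is conditioned on $\varepsilon=1$), and that in type (ii) the exclusion of certain $\Phi$ is likewise conditioned on $\varepsilon=1$ and hence inactive. Once these are checked, the corollary follows immediately, and I would simply remark that it recovers the classical normal form for pairs of skew-symmetric forms as found in \cite{r.sch,tho}. No separate argument is needed for part (c) of the corollary's underlying theorem since the corollary is stated directly in terms of Frobenius blocks.
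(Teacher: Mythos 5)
Your proposal is correct and takes exactly the route the paper takes: the paper disposes of Corollary~\ref{Theorem 4} in a single sentence by specializing Theorem~\ref{The4} with $\varepsilon=\delta=-1$, and your type-by-type bookkeeping (types (iii) and (v) drop out because $\varepsilon\ne 1$, the parity and existence side-conditions in types (ii) and (iv) deactivate, and the type-(ii) replacement collapses since $\varepsilon\delta=1$ with trivial involution gives $\chi_\Psi=\chi_\Phi$) simply makes explicit what the paper leaves implicit. The only minor note is that the exponent in the paper's replacement rule is evidently a typo for $\deg\chi_\Phi$ rather than $\det\chi_\Phi$, which you read correctly.
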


\begin{remark}\label{hts}
If $\delta =-1$ then
the matrix pair
$B_n^a$ defined in
\eqref{ser120}
consists of $n\times
n$ matrices and $n$ is
even. In this case,
the pair
\begin{equation}\label{serk1}
C_n^a:=\left(\!
a\! \begin{bmatrix}
0&&1&0\\
&\ddd&\ddd\\
1&0\\
0&&&0
\end{bmatrix},\
a\!\begin{bmatrix}
0&&&&&1\\
&&&&\ddd&\\
&&&1&&\\
&&-1&&&\\
&\ddd&&&&\\
-1&&&&&0\\
\end{bmatrix}
 \!\right)
\end{equation}
of symmetric and
skew-symmetric
matrices of size
$n\times n$ can be
used in
\eqref{ser120}--\eqref{ftl} instead of
$B_n^a$. This follows
from the proof of
Theorem 4 in
\cite{ser_izv} since
the pairs $B_n^a$ and
$C_n^a$ are
equivalent; that is,
$RB_n^aS=C_n^a$ for
some nonsingular $R$
and $S$.
\end{remark}

Let \[f(x)=
\gamma_0x^m +
\gamma_1x^{m-1}+\dots+\gamma_m\in
\mathbb F[x],\qquad m\ge 1,\
\gamma_0\ne
0\ne\gamma_m.\] A
vector $(a_1,
a_{2},\dots, a_n)$
over $\mathbb F$ is
called
\emph{$f$-recurrent}
if either $n\le m$, or
\[
\gamma_0 a_{l} +
\gamma_{1}a_{l+1}+\dots+
\gamma_ma_{l+m}=0\qquad\text{for all }
l=1,2,\dots,n - m.
\]
Thus, this
vector is completely
determined by any
fragment of length
$m$.

Existence conditions
and explicit forms of
$\sqrt[\displaystyle
*]{\Phi}$ and
$\Phi_{\varepsilon}$ for Frobenius blocks $\Phi$
are given in the
following two lemmas.

\begin{lemma}[{\cite[Theorem
7]{ser_izv}; a
detailed proof in
\cite[Lemma
2.3]{h-s_bilin_anyf}}]
\label{lsdy1} Let
$\mathbb F$ be a field
of characteristic not
$2$ with involution
$($possibly, the
identity$)$. Let
$\Phi$ be an ${n\times
n}$ nonsingular
Frobenius block whose
characteristic
polynomial is a power
of an irreducible
polynomial
$p_{\Phi}(x)$.

\begin{itemize}
  \item[{\rm(a)}]
$\sqrt[\displaystyle
*]{\Phi}$ exists if
and only if
\begin{equation}\label{lbdr}
 p_{\Phi}(x) =
p_{\Phi}^{\vee}(x)\ (\text{see \eqref{ksu}}),\ \
\text{and}
\end{equation}
\begin{equation}\label{4.adlw}
\text{if the
involution on $\mathbb
F$ is the identity,
also $p_{\Phi}(x)\ne x
+ (-1)^{n+1}$}.
\end{equation}

  \item[{\rm(b)}] If
\eqref{lbdr} and
\eqref{4.adlw} are
satisfied and
\begin{equation}\label{ser24lk}
\chi_{\Phi}(x)=x^n+
c_1x^{n-1}+\dots+c_n
\end{equation}
is the
characteristic
polynomial of
${\Phi}$, then for
$\sqrt[\displaystyle
*]{\Phi}$ one can take
the Toeplitz matrix
\begin{equation}\label{okjd}
\sqrt[\displaystyle
*]{\Phi}:= [a_{i-j}]=
\begin{bmatrix}
a_0
&a_{-1}&\ddots&a_{1-n}
\\a_{1}&a_0
&\ddots&\ddots
\\\ddots&\ddots&
\ddots&a_{-1}
\\ a_{n-1}&\ddots&
a_{1}&a_0
\end{bmatrix},
\end{equation}
whose vector of
entries
$(a_{1-n},a_{2-n},\dots,a_{n-1})$
is the
$\chi_{\Phi}$-recurrent
extension of the
vector
\begin{equation}\label{ksy}
v=(a_{1-m},\dots,a_{m})
=(a,0,\dots,0,\bar a)
\end{equation}
of length
\begin{equation}\label{leg}
2m=
  \begin{cases}
    n & \text{if $n$ is even}, \\
    n+1 & \text{if $n$ is
odd,}
  \end{cases}
\end{equation}
in which
\begin{equation}
 \label{mag}
a:=
  \begin{cases}
1 & \text{if $n$ is
even,
    except for the
    case}
    \\
    &
\qquad
p_{\Phi}(x)=x+c\
\text{with }c
^{n-1}=-1,
          \\
    \chi_{\Phi}(-1)&
\text{if $n$ is odd
and $p_{\Phi}(x)\ne
x+1$,}
           \\
e-\bar
e&\text{otherwise,
with any fixed $\bar
e\ne e\in\mathbb F$}.
  \end{cases}
\end{equation}
\end{itemize}
\end{lemma}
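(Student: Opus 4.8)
\medskip
\noindent\textit{Proof strategy.}
Write $M:=\sqrt[\displaystyle *]{\Phi}$. By \eqref{vr1} this is nothing but a nonsingular matrix with $\Phi=M^{-*}M$, and conversely any such $M$ is an admissible $\sqrt[\displaystyle *]{\Phi}$; by the substitution $\sqrt[\displaystyle *]{\Psi}=S^*\sqrt[\displaystyle *]{\Phi}S$ from before the statement it even suffices to produce a nonsingular $M$ with $M^{-*}M$ \emph{similar} to $\Phi$. If such an $M$ exists then $\Phi$ is similar to $\Phi^{-*}$, so $\chi_\Phi=\chi_{\Phi^{-*}}=\chi_\Phi^\vee$; since $\chi_\Phi=p_\Phi^l$ and $\chi_\Phi^\vee=(p_\Phi^\vee)^l$ with $p_\Phi^\vee$ again monic irreducible, this gives $p_\Phi=p_\Phi^\vee$, which is \eqref{lbdr}. (Conceptually, identifying $\mathbb F^n$ with $R:=\mathbb F[x]/\chi_\Phi(x)\mathbb F[x]$, on which $\Phi$ acts as multiplication by $\kappa:=x+\chi_\Phi(x)\mathbb F[x]$, the problem becomes one about the ring $R$ with the involution $\kappa\mapsto\kappa^{-1}$ of Lemma~\ref{LEMMA 7}, which is well defined exactly when $p_\Phi=p_\Phi^\vee$, because $\overline{p_\Phi}(\kappa^{-1})^l=p_\Phi^\vee(\kappa)^l$ up to a unit; this is the setting of \cite{ser_izv}.)

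For the necessity of \eqref{4.adlw}, note that for an $n\times n$ Frobenius block a \emph{linear} self-dual $p_\Phi(x)=x+c$ forces $l=n$ and $\Phi\sim J_n(-c)$ with $c\bar c=1$, and when the involution on $\mathbb F$ is the identity $c=\pm1$; here $p_\Phi(x)=x+(-1)^{n+1}$ is precisely the case $\Phi\sim J_n(1)$ with $n$ even, or $\Phi\sim J_n(-1)$ with $n$ odd. Suppose the involution is the identity and $\sqrt[T]{\Phi}=M$ existed in one of these cases. From $M=M^T\Phi$, the skew-symmetric matrix $M-M^T=M^T(\Phi-I)$ has rank $\operatorname{rank}(\Phi-I)$, which equals $n-1$ for $J_n(1)$ and $n$ for $J_n(-1)$; but a skew-symmetric matrix over a field of characteristic $\ne2$ has even rank, forcing $n$ odd in the first case and $n$ even in the second---contradiction. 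Hence \eqref{lbdr} and \eqref{4.adlw} are necessary.

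Conversely, assume \eqref{lbdr} and \eqref{4.adlw}, and seek $\sqrt[\displaystyle *]{\Phi}$ among Toeplitz matrices $M=[a_{i-j}]$. Writing out $M=M^*\Phi$ with $\Phi$ in the companion form \eqref{3} shows the equation is equivalent to the symmetry relations $a_r=\bar a_{1-r}$ $(2-n\le r\le n-1)$ together with the requirement that $(a_{1-n},\dots,a_{n-1})$ extend to a $\chi_\Phi$-recurrent sequence. As $p_\Phi=p_\Phi^\vee$ gives $\chi_\Phi=\chi_\Phi^\vee$, the $\chi_\Phi$- and $\chi_\Phi^\vee$-recurrent sequences coincide, and for a $\chi_\Phi$-recurrent $(a_k)$ the reflected--conjugated sequence $\bar a_{1-k}$ is again $\chi_\Phi$-recurrent; so the symmetry relations, once imposed on $n$ consecutive positions, propagate to all of them. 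It therefore suffices to prescribe a symmetric seed of even length $2m$ (equal to $n$ or $n+1$, cf.\ \eqref{leg}) and pass to its $\chi_\Phi$-recurrent extension; the seed $(a,0,\dots,0,\bar a)$ of \eqref{ksy} is symmetric, and when $n$ is odd---the only case in which the seed exceeds the order of the recurrence---it is $\chi_\Phi$-recurrent iff $a+\chi_\Phi(0)\,\bar a=0$, which the prescriptions in \eqref{mag} satisfy thanks to $p_\Phi=p_\Phi^\vee$. It remains to pick $a$ so that $M$ is nonsingular: evaluating the Toeplitz determinant $\det[a_{i-j}]$ by the recurrence expresses it, up to a unit, through $a$, $\bar a$ and a few values of $\chi_\Phi$, and the three cases of \eqref{mag}---$a=1$, $a=\chi_\Phi(-1)$, or $a=e-\bar e$---are exactly those for which that expression is nonzero (note $\chi_\Phi(-1)\ne0$ unless $p_\Phi(x)=x+1$, and $e-\bar e\ne0$ is available precisely when the involution is not the identity, i.e.\ outside the scope of \eqref{4.adlw}). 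As $\operatorname{char}\mathbb F\ne2$ these give $a\ne0$, so $M\ne0$ is nonsingular with $M=M^*\Phi$, as required.

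The technical heart is this last step: computing $\det[a_{i-j}]$ precisely enough to locate its zeros is what both forces the case division \eqref{mag} and, dually, singles out the exceptional polynomial in \eqref{4.adlw}; the rest is bookkeeping with linear recurrences and the identity $\chi_\Phi=\chi_\Phi^\vee$. Alternatively one can dispose of the case $l=1$ first---where $R=\mathbb F(\kappa)$ is a field with involution $f(\kappa)^\circ=\bar f(\kappa^{-1})$ (Lemma~\ref{LEMMA 7}) and Hilbert~90 yields a unit $u$ with $u=u^\circ\kappa$, since the norm of $\kappa$ over the fixed subfield is $\kappa\kappa^{-1}=1$---and then lift to general $l$, but recovering the explicit Toeplitz normal form still requires the recurrence analysis.
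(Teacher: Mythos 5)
Note first that the paper does not prove this lemma: it is quoted from \cite[Theorem~7]{ser_izv}, with \cite[Lemma~2.3]{h-s_bilin_anyf} cited for a detailed proof, so there is no in-text argument to compare against. Much of your reconstruction is sound: the reduction $M=M^*\Phi\Rightarrow\Phi\sim\Phi^{-*}\Rightarrow p_\Phi=p_\Phi^\vee$; the necessity of \eqref{4.adlw} via the even rank of the skew-symmetric matrix $M-M^T=M^T(\Phi-I)$ against $\operatorname{rank}(\Phi-I)=n-1$ for $J_n(1)$ and $n$ for $J_n(-1)$, which is a clean, fully worked elementary argument; and the translation of $M=M^*\Phi$ for Toeplitz $M=[a_{i-j}]$ into the symmetry $a_r=\bar a_{1-r}$ plus a $\chi_\Phi$-recurrence, together with the observation that the reflected--conjugated sequence of a $\chi_\Phi$-recurrent one is again $\chi_\Phi$-recurrent because $\chi_\Phi=\chi_\Phi^\vee$.

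The genuine gap is the one you flag yourself: nonsingularity. Everything up to that point only produces \emph{some} Toeplitz solution of $M=M^*\Phi$ --- the zero matrix always solves it --- and the entire content of the case split in \eqref{mag} is that for those specific $a$ the determinant $\det[a_{i-j}]$ is nonzero. Your treatment of this is a single unsupported sentence (``evaluating the Toeplitz determinant \dots expresses it, up to a unit, through $a$, $\bar a$ and a few values of $\chi_\Phi$''); no computation, formula, or citation is supplied, yet this is precisely where the lemma's content lives, and it is what simultaneously forces the case distinctions in \eqref{mag} and explains, from the construction side, the exclusion in \eqref{4.adlw}. Likewise the consistency condition $a+\chi_\Phi(0)\bar a=0$ needed when $n$ is odd is asserted to hold ``thanks to $p_\Phi=p_\Phi^\vee$'' but is not checked in the subcases of \eqref{mag} (it does hold --- for instance $\chi_\Phi=\chi_\Phi^\vee$ gives $\chi_\Phi(-1)=(-1)^n\chi_\Phi(0)\,\overline{\chi_\Phi(-1)}$, which for odd $n$ is exactly the required relation for $a=\chi_\Phi(-1)$ --- but you leave that to the reader). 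As written the proposal is a correct outline of the argument in \cite{h-s_bilin_anyf}, but it stops short of being a proof.
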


\begin{lemma}[{\cite[Theorem
8]{ser_izv}}]\label{THEjM}
 Let
$\mathbb F$ be a field
of characteristic not
$2$ with involution
$($possibly, the
identity$)$. Let
$\Phi$ be an $n\times
n$ Frobenius block
\eqref{3} over
$\mathbb F$.
 Existence
conditions for the
matrix
${\Phi}_{\varepsilon}$
are:
\begin{gather}\label{lit}
p_{\Phi}(x) =
\varepsilon^n\bar
p_{\Phi} (\varepsilon
x)\ (\text{see \eqref{iut}}),
  \\
\text{if }
\varepsilon=-1\
\text{then also
}\chi_{\Phi}(x)\notin\{x^2,x^4,x^6,\ldots\}.
\end{gather}

With these conditions
satisfied, one can
take
\[
{\Phi}_{\varepsilon}=
[\varepsilon^i
a_{i+j}],
\]
in which the sequence
$(a_2,a_3,\dots,a_{2n})$
is $\chi$-recurrent,
and is defined by the
fragment
\begin{equation}\label{msu1}
(a_2,\dots,a_{n+1})=
  \begin{cases}
    (1,0,\dots,0) & \text{if $\Phi$
    is nonsingular}, \\
    (0,\dots,0,1) & \text{if $\Phi$
    is singular}.
  \end{cases}
\end{equation}
\end{lemma}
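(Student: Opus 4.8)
\emph{Proof idea.} The plan is to translate the matrix equations \eqref{vr2} into a single statement about a bilinear (or sesquilinear) form, to solve that form-theoretic problem inside the algebra $R:=\mathbb F[x]/\chi_{\Phi}(x)\mathbb F[x]$, and then to recognize the twisted Hankel matrix $[\varepsilon^{i}a_{i+j}]$ in the statement as the Gram matrix of the form so produced. Put $\beta(u,v):=u^{*}\Phi_{\varepsilon}v$. Since $\varepsilon^{2}=1$ and $\bar\varepsilon=\varepsilon$, the two requirements $\Phi_{\varepsilon}=\Phi_{\varepsilon}^{*}$ and $\Phi_{\varepsilon}\Phi=\varepsilon(\Phi_{\varepsilon}\Phi)^{*}$ say precisely that $\beta$ is a nondegenerate Hermitian form on $V:=\mathbb F^{n}$ (symmetric, if the involution on $\mathbb F$ is the identity) with respect to which $\Phi$ is $\varepsilon$-selfadjoint, i.e. $\beta(\Phi u,v)=\varepsilon\,\beta(u,\Phi v)$. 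Identifying $V$ with $R$ by sending the $i$-th coordinate vector to $x^{i-1}$ turns $\Phi$ into multiplication by $x$, and iterating $\varepsilon$-selfadjointness gives $\beta(x^{i-1},x^{j-1})=\varepsilon^{i-1}\lambda(x^{i+j-2})$ with $\lambda(h):=\beta(1,h)$; thus $\beta$ is completely determined by the single functional $\lambda$. When \eqref{lit} holds --- this is exactly the condition for the map $\phi$ that acts by the involution on $\mathbb F$-coefficients and by $x\mapsto\varepsilon x$ to descend to a ring involution of $R$ --- one may rewrite $\beta(u,v)=\lambda(\phi(u)\,v)$, so that $\varepsilon$-selfadjointness of $\Phi$ becomes automatic and Hermitian symmetry of $\beta$ becomes the single relation $\overline{\lambda(h)}=\lambda(\phi(h))$ on $R$.

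\emph{Necessity of the conditions.} If $\Phi_{\varepsilon}$ exists then $\Phi_{\varepsilon}\Phi\Phi_{\varepsilon}^{-1}=\varepsilon\Phi^{*}$, so $\Phi$ is similar to $\varepsilon\Phi^{*}$ and hence $\chi_{\Phi}(x)=\varepsilon^{n}\bar\chi_{\Phi}(\varepsilon x)$; passing to the irreducible factor (using unique factorization in $\mathbb F[x]$) turns this equality into \eqref{lit}. For the remaining condition, assume $\varepsilon=-1$ --- so the involution on $\mathbb F$ is the identity --- and $\chi_{\Phi}(x)=x^{n}$ with $n$ even. Pick a $\Phi$-cyclic vector $e$ of $V$ and set $b_{k}:=\beta(e,\Phi^{k}e)$. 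Then $\beta(\Phi^{i}e,\Phi^{j}e)=(-1)^{i}b_{i+j}$ and $b_{k}=0$ for $k\ge n$, while symmetry of $\beta$ forces $\bigl((-1)^{i}-(-1)^{j}\bigr)b_{i+j}=0$; since the characteristic is not $2$, this gives $b_{k}=0$ for all odd $k$, in particular $b_{n-1}=0$. Hence the Gram matrix $[(-1)^{i}b_{i+j}]_{i,j=0}^{n-1}$ is supported on $\{i+j\le n-2\}$, so its row with $i=n-1$ vanishes and the matrix is singular, contradicting nondegeneracy of $\beta$. Therefore $\chi_{\Phi}\notin\{x^{2},x^{4},\dots\}$.

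\emph{Sufficiency and the explicit matrix.} Assume \eqref{lit} and the remaining condition, so that $\phi$ is a ring involution of $R$. For any functional $\lambda$ with $\overline{\lambda(h)}=\lambda(\phi(h))$, the form $\beta(u,v):=\lambda(\phi(u)v)$ is Hermitian and $\varepsilon$-selfadjoint for $\Phi$, and it is nondegenerate if and only if $\lambda$ vanishes on no nonzero ideal of $R$, equivalently if and only if $\lambda$ does not vanish on the socle of $R$ --- the unique minimal nonzero ideal, which is contained in every nonzero ideal because the local ring $R$ is uniserial. So it suffices to choose $\lambda$ nonvanishing on the socle. If $\Phi$ is nonsingular then $p_{\Phi}(0)\ne0$ and one takes $\lambda$ to be $\varepsilon$ times the coefficient of $x^{0}$ in the monomial basis of $R$; it sends the socle generator $p_{\Phi}(x)^{l-1}$ to $\varepsilon\,p_{\Phi}(0)^{l-1}\ne0$. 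If $\Phi$ is singular then $\chi_{\Phi}=x^{n}$, the socle is the line $\mathbb F x^{n-1}$, and one takes $\lambda$ to be $\varepsilon$ times the coefficient of $x^{n-1}$. Writing out the Gram matrix of $\beta(u,v)=\lambda(\phi(u)v)$ in the monomial basis $1,x,\dots,x^{n-1}$ produces exactly the matrix $[\varepsilon^{i}a_{i+j}]$ with $a_{k+2}:=\varepsilon\,\lambda(x^{k})$: the fragment \eqref{msu1} is just the list $\lambda(1),\dots,\lambda(x^{n-1})$ in the two cases, and the $\chi_{\Phi}$-recurrence of $(a_{2},\dots,a_{2n})$ is the reduction of $x^{i+j-2}$ modulo $\chi_{\Phi}$ when $i+j-2\ge n$. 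It remains to check that this matrix is Hermitian, i.e. $\bar a_{k}=\varepsilon^{k}a_{k}$; the fragment \eqref{msu1} obeys this, since its only nonzero entry $1$ sits in position $2$ when $\Phi$ is nonsingular and in position $n+1$ when $\Phi$ is singular, and $n+1$ is even precisely when $\chi_{\Phi}\notin\{x^{2},x^{4},\dots\}$ --- which is where the remaining condition is used in the case $\varepsilon=-1$ --- and \eqref{lit} makes the $\chi_{\Phi}$-recurrence send the sequence $(a_{k})$ to the sequence $(\varepsilon^{-k}\bar a_{k})$, so the property propagates from the fragment to all of $(a_{2},\dots,a_{2n})$. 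Together with the nonsingularity just established, this gives the asserted $\Phi_{\varepsilon}$.

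\emph{Main obstacle.} The genuinely delicate step is the nonsingularity of $\Phi_{\varepsilon}$: one has to locate exactly where the functional $\lambda$ coming from \eqref{msu1} meets the socle of $R$, and it is there that the prime-power shape $\chi_{\Phi}=p_{\Phi}^{l}$ and the split of \eqref{msu1} into a singular and a nonsingular case are really used. Stated purely in matrix terms, the same point is a Hankel-determinant computation showing that the sequence in \eqref{msu1} has minimal linear recurrence equal to the whole of $\chi_{\Phi}$ rather than to a proper divisor $p_{\Phi}^{j}$. Everything else --- the Hermitian symmetry of $\Phi_{\varepsilon}$ and the identity $\Phi_{\varepsilon}\Phi=\varepsilon(\Phi_{\varepsilon}\Phi)^{*}$, which upon multiplying out the companion form \eqref{3} of $\Phi$ telescopes through $a_{l}+c_{1}a_{l+1}+\dots+c_{n}a_{l+n}=0$ --- is bookkeeping with the recurrence, the involution of $\mathbb F$, and the twist $x\mapsto\varepsilon x$, and is governed entirely by \eqref{lit}.
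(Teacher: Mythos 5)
The paper does not actually prove Lemma~\ref{THEjM}; it cites the statement verbatim from \cite[Theorem~8]{ser_izv}, so there is no in-paper argument to set yours beside. Taken on its own merits, your proof is correct, and the route you take --- reinterpreting $\Phi_{\varepsilon}$ as the Gram matrix of a Hermitian, $\Phi$-$\varepsilon$-selfadjoint form on $R=\mathbb F[x]/\chi_{\Phi}(x)\mathbb F[x]$, reducing that form to a single functional $\lambda$ via $\beta(u,v)=\lambda(\phi(u)v)$, and locating nondegeneracy precisely at the socle of the uniserial ring $R$ --- is the natural module-theoretic proof and explains conceptually why the primary-power shape $\chi_{\Phi}=p_{\Phi}^{\,l}$ and the nonsingular/singular split of~\eqref{msu1} enter. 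The necessity argument for the second condition via the vanishing of $b_{n-1}$ is clean and complete. Two small points are worth tightening. First, the similarity computation gives $p_{\Phi}(x)=\varepsilon^{\deg p_{\Phi}}\bar p_{\Phi}(\varepsilon x)$, not $\varepsilon^{n}$; you should record that the only potentially discordant case ($\varepsilon=-1$, $\deg p_{\Phi}$ odd, $l=\chi_{\Phi}/p_{\Phi}$ of even degree) forces $p_{\Phi}=x$ and hence $\chi_{\Phi}\in\{x^{2},x^{4},\dots\}$, which the second condition rules out, so $\varepsilon^{\deg p_{\Phi}}=\varepsilon^{n}$ whenever~\eqref{lit} can hold. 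Second, be explicit that the recurrence you derive from reduction of $x^{i+j-2}$ modulo $\chi_{\Phi}$ is $c_n a_{l}+c_{n-1}a_{l+1}+\dots+c_1 a_{l+n-1}+a_{l+n}=0$, i.e.\ the coefficient-reversed version of the relation $\gamma_0 a_l+\dots+\gamma_m a_{l+m}=0$ given in the paper's definition of $f$-recurrent; a direct $2\times2$ check ($\Phi_1=\operatorname{diag}(1,a_4)$, forcing $a_4=-c_2$, whereas the literal paper recurrence would give $a_4=-1/c_2$) shows that it is the reversed recurrence that the equations~\eqref{vr2} actually require. For Lemma~\ref{lsdy1} the two conventions happen to coincide because the Toeplitz sequence there is palindromic, but for the Hankel sequence here they do not, so your identification is correct but is silently normalizing a convention slip in the paper's statement rather than matching it literally; one sentence noting this would make the argument airtight.
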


\section{Canonical
forms over finite
fields}\label{fin}

In this section we
give canonical
matrices of bilinear
and sesquilinear
forms, pairs of
symmetric or
skew-symmetric forms,
and pairs of Hermitian
forms over a finite
field $\mathbb F$ of
characteristic not
$2$. We use Theorems
\ref{Theorem 5} and
\ref{The4}, in which
these canonical
matrices are given up
to classification of
quadratic and
Hermitian forms over
finite extensions of
$\mathbb F$ (that is,
over finite fields of
characteristic not
$2$), and the
following lemma.

\begin{lemma}
[{\cite[Chap.\,1,
\S\,8]{die}}]\label{THt}

\begin{itemize}
  \item[{\rm(a)}] Each
quadratic form of rank $r$ over a
finite field  $\mathbb F$ of
characteristic not $2$ is equivalent to
\[\text{either }\
x_1^2+ x_2^2+\dots
+x_r^2,\qquad\text{or }\ \zeta x_1^2+
x_2^2+\dots +x_r^2,
\]
where $\zeta $ is a fixed nonsquare in
$\mathbb F$.

  \item[{\rm(b)}] Each
Hermitian form of rank $r$ over a
finite field of characteristic not $2$
with nonidentity involution is
equivalent to $ \bar x_1 y_1+\dots
+\bar x_r y_r. $
\end{itemize}
\end{lemma}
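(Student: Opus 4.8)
The plan is to reduce both parts to the multiplicative structure of a finite field after diagonalizing the form.

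For part (a): since $\mathbb{F}$ has characteristic $\ne 2$, every quadratic form is diagonalizable (if the associated symmetric bilinear form is nonzero then some vector $v$ has $q(v)\ne0$; split off $\langle q(v)\rangle$ and induct), so $q$ is equivalent to $a_1x_1^2+\dots+a_rx_r^2$ with $a_1,\dots,a_r\in\mathbb{F}^{\times}$. Write $\mathbb{F}=\mathbb{F}_q$ with $q$ odd; the squares form a subgroup of index $2$ in $\mathbb{F}^{\times}$, so the determinant class $a_1\cdots a_r\in\mathbb{F}^{\times}/(\mathbb{F}^{\times})^2$ is a congruence invariant, and I claim it is the only one. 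The key step is the counting lemma that binary forms are universal: for $a,b\in\mathbb{F}^{\times}$ and any $c\in\mathbb{F}$, the sets $\{ax^2:x\in\mathbb{F}\}$ and $\{c-by^2:y\in\mathbb{F}\}$ each have $(q+1)/2$ elements, hence intersect, so $ax^2+by^2=c$ is solvable; in particular any nondegenerate form of rank $\ge2$ represents $1$. Then I would induct on $r$: for $r\ge2$ the form splits as $\langle 1\rangle\perp g$ with $g$ nondegenerate of rank $r-1$ and $\det g$ in the same square class as $\det q$, so by the induction hypothesis $g\cong\langle1,\dots,1\rangle$ or $\langle\zeta,1,\dots,1\rangle$ according as $\det g$ is a square or not; the case $r=1$ is immediate. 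Finally the two listed forms are inequivalent because their determinants lie in different square classes.

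For part (b): a nonidentity involution on a finite field $\mathbb{F}$ is necessarily the Frobenius $a\mapsto\bar a=a^{q}$ with $\mathbb{F}=\mathbb{F}_{q^2}$ and fixed field $\mathbb{F}_q$. First I would diagonalize the Hermitian form: since the trace $\mathbb{F}\to\mathbb{F}_q$ is surjective, a nonzero Hermitian form $h$ has a vector $v$ with $h(v,v)\ne0$ (expand $h(x+zy,x+zy)$ and choose the scalar $z$ suitably), so $h$ splits as $\langle h(v,v)\rangle\perp h'$; inducting gives $h\cong\langle a_1,\dots,a_r\rangle$ with each $a_i=\bar a_i\in\mathbb{F}_q^{\times}$ (the diagonal entries of a Hermitian matrix are fixed by the involution). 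The crucial step is that the norm map $N\colon\mathbb{F}^{\times}\to\mathbb{F}_q^{\times}$, $z\mapsto z\bar z=z^{q+1}$, is surjective: it is a homomorphism of cyclic groups whose image is a subgroup of $\mathbb{F}_q^{\times}$ of order $(q^2-1)/(q+1)=q-1=|\mathbb{F}_q^{\times}|$. Choosing $z_i$ with $N(z_i)=a_i^{-1}$ and substituting $x_i\mapsto z_ix_i$ converts each $\langle a_i\rangle$ into $\langle 1\rangle$, whence $h\cong\bar x_1y_1+\dots+\bar x_ry_r$.

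I expect the main obstacle in (a) to be the counting argument forcing binary forms to be universal, since this is precisely what bounds the number of congruence classes by the discriminant; the rest is bookkeeping with determinants. In (b) the analogous heart of the matter is the surjectivity of the norm map. Both are short once set up, so the lemma is genuinely classical (this is the textbook argument, cf.\ Dieudonn\'e).
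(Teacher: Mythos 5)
Your proof is correct. However, the paper offers no proof of this lemma at all: it is stated as a citation to Dieudonn\'e \cite[Chap.\,1, \S\,8]{die}, with a leftover editorial note in the source pointing to Scharlau's book for part (b), so there is no in-paper argument to compare against. What you have written is the standard textbook derivation, and it is sound: in (a) the diagonalization step, the counting argument showing every nondegenerate binary form is universal (each of the sets $\{ax^2\}$ and $\{c-by^2\}$ has $(q+1)/2$ elements, so they meet), and the induction splitting off $\langle 1\rangle$ while tracking the discriminant class are all correctly executed; in (b) the trace argument that produces a nonisotropic vector, the identification of diagonal entries as elements of the fixed field, and the surjectivity of the norm $z\mapsto z^{q+1}$ (image of order $(q^2-1)/(q+1)=q-1$) likewise give exactly the claimed single class per rank. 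The only point worth making explicit, which you glossed over, is that ``rank $r$'' means the form is nondegenerate on its $r$-dimensional support, which is what guarantees the orthogonal complement of a nonisotropic line is again nondegenerate during the induction.
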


Utv. (b) eshe iz Scharlau ch 10, 1.6,
examples (i).

\subsection{Canonical matrices for
congruence and
*congruence}

Define the $n$-by-$n$
matrix
\begin{equation}\label{kus}
\Gamma_{n}=%
\begin{bmatrix}
0&  &  &  & \ddd
\\
&  &  & -1 &
\ddd \\
&  & 1 & 1 \\
& -1 & -1\\
1 & 1  &  &  & 0
\end{bmatrix}
\qquad
(\Gamma_{1}=[\,1\,]).
\end{equation}%

\begin{theorem}
\label{Th5} Every square matrix over a
finite field\/ $\mathbb F$ of
characteristic different from $2$ is
congruent to a direct sum that is
uniquely determined up to permutation
of summands and consists of any number
of summands of the following types:
\begin{itemize}
 \item [{\rm(i)}]
$J_n(0)$;

 \item [{\rm(ii)}]
$[\Phi\diag I_n]$, in
which $\Phi$ is an
$n\times n$
nonsingular Frobenius
block such that
\begin{equation}\label{fus}
p_{\Phi}(x) \ne
p_{\Phi}^{\vee}(x)\ (\text{see \eqref{ksu1}})\qquad
\text{or}\qquad
p_{\Phi}(x)= x +
(-1)^{n+1},
\end{equation}
and $\Phi$ is
determined up to
replacement by the
Frobenius block $\Psi$
with $\chi_{\Psi}(x)
=\chi^{\vee}_{\Phi}(x)$;

 \item [{\rm(iii)}]
$\sqrt[T]{\Phi}$, in which $\Phi$ is a nonsingular
Frobenius block
such that $p_{\Phi}(x)
= p_{\Phi}^{\vee}(x)$
and $\deg
p_{\Phi}(x)\ge 2$;

 \item [{\rm(iv)}]
for each
$n=1,2,\dots$:
\begin{itemize}
  \item[$\bullet$]
$\Gamma_{n}$,
\item[$\bullet$] at
most one summand
$\zeta \Gamma_{n}$, in
which $\zeta$ is a
fixed nonsquare of
$\mathbb F$.
\end{itemize}
\end{itemize}
\end{theorem}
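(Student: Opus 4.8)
The plan is to specialize Theorem \ref{Theorem 5} to the case $\mathbb F$ finite with the identity involution, and then resolve the remaining "classification of Hermitian forms over finite extensions" ingredient in part (b)(iii) using Lemma \ref{THt}(a). First I would note that for the identity involution, *congruence is congruence, $p^\vee$ is given by \eqref{ksu1}, and the types (i)--(iii) of Theorem \ref{Theorem 5}(a) translate directly into the present types (i)--(iv) as follows: type (i) is unchanged; a nonsingular Frobenius block $\Phi$ either has $\sqrt[T]{\Phi}$ nonexistent — which by Lemma \ref{lsdy1}(a) means exactly that \eqref{lbdr} fails or \eqref{4.adlw} fails, i.e.\ condition \eqref{fus} holds — giving type (ii) here with the stated replacement rule coming verbatim from Theorem \ref{Theorem 5}(b), Type (ii); or $\sqrt[T]{\Phi}$ exists, which is type (iii) of Theorem \ref{Theorem 5}(a), namely $\sqrt[T]{\Phi}q(\Phi)$ with $q$ of the form \eqref{ser13}.

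The main work is to show that in the last case the only surviving canonical summands are $\sqrt[T]{\Phi}$ itself when $\deg p_\Phi\ge 2$ (type (iii) here), together with the scalar blocks $\Gamma_n$ and at most one $\zeta\Gamma_n$ for each $n$ (type (iv) here). For $\deg p_\Phi(x)\ge 2$: by Lemma \ref{LEMMA 7} with the identity involution, the elements $q(\kappa)$ fixed by the involution \eqref{alfta} form the subfield of $\mathbb F(\kappa)$ fixed by $f(\kappa)\mapsto f(\kappa^{-1})$, which is a field; the Hermitian forms in Theorem \ref{Theorem 5}(b), Type (iii) are then just symmetric bilinear (= quadratic) forms over this finite field, and any nonzero such form of rank $1$, $q_1(\kappa)x_1y_1$, can be rescaled: replacing $x_1$ by $c x_1$ multiplies $q_1(\kappa)$ by $c^2$, and since every element of a finite field is a square times a fixed representative — but here one must check that the norm/fixed subfield is still finite and that one may in fact scale $q_1(\kappa)$ to $1$. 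The cleanest route is: the group of values $\{q(\kappa): q$ of the form \eqref{ser13}$, q\neq 0\}$ modulo squares in that fixed field is trivial precisely when $\deg p_\Phi\ge 2$, because then $-1$ or an appropriate unit is available to make all one-dimensional forms equivalent; so any direct sum $\bigoplus \sqrt[T]{\Phi}q_i(\Phi)$ is congruent to a sum of copies of $\sqrt[T]{\Phi}$, and by uniqueness in Theorem \ref{Theorem 5}(b) the number of copies is an invariant — but wait, if all one-dimensional forms are equivalent then a sum of $s$ copies need not reduce to fewer, so the invariant is just the multiplicity $s$, and the canonical summand is $\sqrt[T]{\Phi}$ with that multiplicity. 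I would verify the claim "$\deg p_\Phi\ge 2 \Rightarrow$ all nonzero 1-dim forms over the fixed field are equivalent" by a short direct argument: the fixed field of $f(\kappa)\mapsto f(\kappa^{-1})$ in $\mathbb F(\kappa)$ is a finite field of order $|\mathbb F|^{\lceil \deg p_\Phi/2\rceil}$ or so, and over any finite field a single variable quadratic form's class is determined by whether its coefficient is a square; but one can always multiply by a square to normalize to a chosen unit, hence to $1$.

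The remaining, genuinely hands-on, case is $\deg p_\Phi(x)=1$, i.e.\ $p_\Phi(x)=x-\lambda$ with $\lambda\in\mathbb F^\times$; the condition $p_\Phi=p_\Phi^\vee$ forces $\lambda=\lambda^{-1}$, so $\lambda=\pm1$, and $\Phi$ is similar to the Jordan block $J_n(\pm1)$. Here $r=0$ in \eqref{ser13}, so $q(x)=a_0\in\mathbb F^\times$, and the summand is $\sqrt[T]{\Phi}\,a_0$; by Theorem \ref{Theorem 5}(b) Type (iii) two such summands $\sqrt[T]{\Phi}a_0$ and $\sqrt[T]{\Phi}b_0$ over a group with the same $\Phi$ can be interchanged iff the one-variable quadratic forms $a_0 x^2$, $b_0 x^2$ — or rather a sum of them — are equivalent over $\mathbb F$ itself. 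By Lemma \ref{THt}(a) a rank-$s$ quadratic form over $\mathbb F$ is determined by its discriminant class (square or nonsquare), so a direct sum $\bigoplus_{i=1}^s \sqrt[T]{\Phi}a_i$ reduces to $s-1$ copies of $\sqrt[T]{\Phi}$ (with $a=1$) plus one copy of $\sqrt[T]{\Phi}\zeta^{\varepsilon}$ where $\zeta^\varepsilon$ records $\prod a_i$ modulo squares. I would then identify $\sqrt[T]{J_n(\pm1)}\cdot 1$, computed from Lemma \ref{lsdy1}(b), with $\pm\Gamma_n$ (a direct, though slightly tedious, Toeplitz computation using the recurrence for $\chi_\Phi(x)=(x\mp1)^n$, exactly the content of formula \eqref{kus}; note the sign: when $p_\Phi(x)=x+(-1)^{n+1}$ condition \eqref{4.adlw} excludes $\sqrt[T]{\Phi}$, so only the allowed sign of $\lambda$ contributes, and that is why one gets precisely $\Gamma_n$ rather than a pair of sign variants). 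Absorbing the scalar, the surviving blocks are $\Gamma_n$ with arbitrary multiplicity and at most one $\zeta\Gamma_n$, which is type (iv). Finally, uniqueness up to permutation of summands is inherited directly from the uniqueness clauses in Theorem \ref{Theorem 5}(b) together with the fact that, over a finite field, Lemma \ref{THt}(a) makes the Hermitian-form equivalence invariant in Type (iii) reduce to a single $\mathbb Z/2$ invariant, which is exactly encoded by "$\Gamma_n$ versus $\zeta\Gamma_n$." I expect the scalar-block bookkeeping for $p_\Phi(x)=x\pm1$ — pinning down the exact normal form $\Gamma_n$ and checking the sign constraint forced by \eqref{4.adlw} — to be the main obstacle, since everything else is a clean transcription of Theorems \ref{Theorem 5}, Lemmas \ref{lsdy1}, \ref{LEMMA 7}, and \ref{THt}.
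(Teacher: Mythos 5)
Your treatment of the case $\deg p_\Phi(x)\ge 2$ contains a genuine error in the key step. The forms that arise in Theorem \ref{Theorem 5}(b), Type (iii), are \emph{not} quadratic forms over the fixed subfield of $\mathbb F(\kappa)$; they are Hermitian forms over the full field $\mathbb F(\kappa)$ with respect to the \emph{nonidentity} involution $f(\kappa)^\circ=f(\kappa^{-1})$. (That this involution is nonidentity when $\deg p_\Phi\ge 2$ is exactly the short argument the paper gives: if $\kappa=\kappa^{-1}$ then $(x^2-1)\mid p_\Phi(x)$, forcing $p_\Phi(x)=x\pm 1$.) Consequently, rescaling $x_1\mapsto cx_1$ multiplies the coefficient $q_1(\kappa)$ by the \emph{norm} $c^\circ c$, not by $c^2$, and the correct invariant is the coefficient modulo norms, not modulo squares. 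Your claim that "the group of values modulo squares in that fixed field is trivial precisely when $\deg p_\Phi\ge 2$" is false: by Lemma \ref{LEMMA 7} those values sweep out the entire fixed subfield, whose square-class group is $\mathbb Z/2$, and your subsequent sentence even contradicts itself ("determined by whether its coefficient is a square" versus "one can always multiply by a square to normalize \dots to $1$"). The correct ingredient is Lemma \ref{THt}(b), which says that Hermitian forms over a finite field with nonidentity involution are classified by rank alone — this encodes the surjectivity of the norm map between finite fields — and the paper applies it directly. Without that replacement, your argument for Type (iii) of the theorem does not go through.

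The rest of the plan matches the paper's route. For $\deg p_\Phi(x)=1$ your bookkeeping via Lemma \ref{THt}(a) is essentially what the paper does; note, though, that the paper avoids your proposed "slightly tedious Toeplitz computation" for $\sqrt[T]{J_n(\pm 1)}$ by citing the identity $\Gamma_n^{-T}\Gamma_n=\Upsilon_n$ from \cite{h-s_bilin_anyf}, observing that $\Upsilon_n$ is similar to $J_n((-1)^{n+1})$, and invoking Theorem \ref{Theorem 5}(c) to substitute $\Upsilon_n$ for the Frobenius block — so $\Gamma_n=\sqrt[T]{\Upsilon_n}$ appears directly as the canonical summand. You might adopt that shortcut once you repair the Hermitian-form step.
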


\begin{proof}
Let $\mathbb F$ be a
finite field of
characteristic not $2$
with the identity
involution.
By Theorem
\ref{Theorem 5}(a), every
square matrix $A$ over
$\mathbb F$ is
congruent to a direct
sum of matrices of the
form
\[
\text{(a)}\
J_n(0),\quad
\text{(b)}\ [\Phi\diag
I_n]\text{ if
$\sqrt[T]{\Phi}$ does
not
exist},\quad\text{(c)}\
\sqrt[T]{\Phi}q(\Phi).
\]

Consider each of these
summands.

\emph{Summands }(a).
Theorem \ref{Theorem
5}(b) ensures that the
summands of the form
$J_n(0)$ are uniquely
determined by $A$,
which gives the
summands (i) of the
theorem.

\emph{Summands }(b).
By Lemma
\ref{lsdy1}(a),
$\sqrt[T]{\Phi}$ does
not exist if and only
if \eqref{fus} holds.
Theorem \ref{Theorem
5}(b) ensures that the
summands of the form
$[\Phi\diag I_n]$ are
uniquely determined by
$A$,  up to replacement
of $\Phi$ by $\Psi$
with $\chi_{\Psi}(x)
=\chi^{\vee}_{\Phi}(x)$.
This gives the
summands (ii).

\emph{Summands }(c).
Let $\Phi$ be a
nonsingular $n\times
n$ Frobenius block for
which $\sqrt[T]{\Phi}$
exists. Then by Lemma
\ref{lsdy1}(a)
\begin{equation}
\label{htr1}
p_{\Phi}(x) =
p_{\Phi}^{\vee}(x),\qquad
p_{\Phi}(x)\ne x +
(-1)^{n+1}.
\end{equation}
Consider the whole
group of summands of the form $\sqrt[T]{\Phi}q(\Phi)$ with the same $\Phi$:
\begin{equation}\label{kuy}
\sqrt[T]{\Phi}q_1(\Phi)
\oplus\dots\oplus
\sqrt[T]{\Phi}q_s(\Phi).
\end{equation}

Let first $\deg
p_{\Phi}(x)>1$. Then
the involution
$f(\kappa)^{\circ} :=
f(\kappa^{-1})$ on the
field $\mathbb
F(\kappa) = \mathbb
F[x]/p_{\Phi}(x)\mathbb
F[x]$ (see
\eqref{alft} and
\eqref{alfta}) is
nonidentity; otherwise
$\kappa=\kappa^{\circ}=\kappa^{-1}$,
$\kappa^2-1=0$,
$(x^2-1)|p_{\Phi}(x)$,
and hence
$p_{\Phi}(x)=x\pm 1$
since it is
irreducible. By Lemma
\ref{THt}(b), the
Hermitian form
\[q_1(\kappa)x_1^{\circ}y_1+\dots+
q_s(\kappa)x_s^{\circ}y_s\]
over $F(\kappa)$ is
equivalent to
$x_1^{\circ}y_1+\dots+
x_s^{\circ}y_s$. By
Theorem \ref{Theorem
5}(b), the matrix
\eqref{kuy} is
congruent to
$\sqrt[T]{\Phi}
\oplus\dots\oplus
\sqrt[T]{\Phi}$ and
the summands of the
form $\sqrt[T]{\Phi}$
with $\deg
p_{\Phi}(x)>1$ are
uniquely determined by
$A$. This gives the
summands (iii).

Let now
$p_{\Phi}(x)=x+c$.
Then by \eqref{htr1}
and \eqref{ksu}
$x+c=c^{-1}(cx+1)$,
$c=c^{-1}$, $c=\pm 1$.
The inequality in
\eqref{htr1} implies
\begin{equation}
\label{hdtr}
p_{\Phi}(x)= x +
(-1)^n.
\end{equation}
By
\cite[Eq.(70)]{h-s_bilin_anyf},
\begin{equation}\label{1x11}
\Gamma_n^{-T}\Gamma_n=\Upsilon_n:=
(-1)^{n+1}
\begin{bmatrix} 1&2&&\text{\raisebox{-6pt}
{\large\rm *}}
\\&1&\ddots&\\
&&\ddots&2\\
0 &&&1
\end{bmatrix}.
\end{equation}
Hence
$\Gamma_n=\sqrt[T]{\Upsilon_n}$
and $\Upsilon_n$ is
similar to
$J_n((-1)^{n+1})$,
which is similar to
$\Phi$ due to
\eqref{hdtr}. By
Theorem \ref{Theorem
5}(c) we can take
$\Upsilon_n$ instead
of $\Phi$ with
$p_{\Phi}(x)= x +
(-1)^n$ in Theorem
\ref{Theorem 5}(a,b).
The field $\mathbb
F(\kappa) = \mathbb
F[x]/p_{\Phi}(x)\mathbb
F[x]$ is $\mathbb F$
with the identity
involution; all
polynomials $q_i(x)$
in \eqref{kuy} are
some scalars
$a_i\in\mathbb F$. By
Lemma \ref{THt}(a),
the quadratic form
\[q_1(\kappa)x_1^2+\dots+
q_s(\kappa)x_s^2=
a_1x_1^2+\dots+
a_sx_s^2\] over
$\mathbb F$ is
equivalent to
\[
\text{either }\
x_1^2+\dots
+x_r^2,\qquad\text{or
}\ \zeta x_1^2+
x_2^2+\dots +x_r^2,
\]
in which $\zeta $ is a
fixed nonsquare of
$\mathbb F$. Theorem
\ref{Theorem 5}(b)
ensures that
\eqref{kuy} is
congruent to
\[\text{either }\
\Gamma_n
\oplus\dots\oplus
\Gamma_n,\qquad\text{or
}\ \zeta \Gamma_n
\oplus\Gamma_n
\oplus\dots\oplus
\Gamma_n,
\]
and this sum is
uniquely determined by
$A$.  This gives the
summands (iv).
\end{proof}

Note that if
$\sqrt[T]{\Phi}$
exists, then $\deg
p_{\Phi}(x)$ is even
or $p_{\Phi}(x)= x +
(-1)^{n}$. Indeed,
$p_{\Phi}(x)=
p_{\Phi}^{\vee}(x)$ by
\eqref{lbdr}. Let
$p_{\Phi}(x)=x^n+c_1x^{n-1}+
\dots+c_n$. Then
\begin{equation*}\label{frw}
x^n+c_1x^{n-1}+
\dots+c_n=c_n^{-1}(c_nx^n+
\dots+c_1x+1),
\end{equation*}
$c_n=c_n^{-1}$, and
$\theta :=c_n=\pm 1$.
If $n=2m+1$, then
\[
p_{\Phi}(x)=x^n+c_1x^{n-1}+
\dots+c_{m+1}x^{m+1}+
\theta
c_{m+1}x^{m}+\dots
+\theta
c_1x^{n-1}+\theta,
\]
and so
$p_{\Phi}(-\theta
)=0$. Since
$p_{\Phi}(x)$ is
irreducible,
$p_{\Phi}(x)=x+\theta
$. By the inequality
\eqref{4.adlw},
$p_{\Phi}(x)= x +
(-1)^{n}$.

JaJa Example: $\mathbb
F_5/(x^2+x+1)$.

\begin{theorem}
\label{Th5a} Let $\mathbb F$ be a
finite field of characteristic not $2$
with nonidentity involution. Every
square matrix over\/ $\mathbb F$ is
*congruent to a direct sum, uniquely
determined up to permutation of
summands, of matrices of the following
types:
\begin{itemize}
 \item [{\rm(i)}]
$J_n(0)$;

 \item [{\rm(ii)}]
$[\Phi\diag I_n]$, in
which $\Phi$ is an
$n\times n$
nonsingular Frobenius
block such that $
p_{\Phi}(x) \ne
p_{\Phi}^{\vee}(x)$  $($see \eqref{ksu}$)$
and $\Phi$ is
determined up to
replacement by the
Frobenius block $\Psi$
with $\chi_{\Psi}(x)
=\chi^{\vee}_{\Phi}(x)$;

 \item [{\rm(iii)}]
$\sqrt[\displaystyle
*]{\Phi}$, in which $\Phi$ is a nonsingular Frobenius
block  such that
$p_{\Phi}(x) =
p_{\Phi}^{\vee}(x)$.
\end{itemize}
\end{theorem}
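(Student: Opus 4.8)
The plan is to specialize Theorem~\ref{Theorem 5} to the case in which $\mathbb F$ is a finite field of characteristic not $2$ with nonidentity involution, following the pattern of the proof of Theorem~\ref{Th5} but using part~(b) of Lemma~\ref{THt} in place of part~(a). By Theorem~\ref{Theorem 5}(a), every square matrix $A$ over $\mathbb F$ is *congruent to a direct sum of matrices of the form $J_n(0)$, of the form $[\Phi\diag I_n]$ with $\sqrt[*]{\Phi}$ nonexistent, and of the form $\sqrt[*]{\Phi}\,q(\Phi)$; I would treat these three kinds of summand in turn.

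The summands $J_n(0)$ are uniquely determined by $A$ by Theorem~\ref{Theorem 5}(b), giving type~(i). For the summands $[\Phi\diag I_n]$ I would apply Lemma~\ref{lsdy1}(a): since the involution on $\mathbb F$ is nonidentity, condition~\eqref{4.adlw} is vacuous, so $\sqrt[*]{\Phi}$ fails to exist precisely when $p_{\Phi}(x)\ne p_{\Phi}^{\vee}(x)$, and Theorem~\ref{Theorem 5}(b) gives uniqueness up to replacing $\Phi$ by the Frobenius block $\Psi$ with $\chi_{\Psi}(x)=\chi_{\Phi}^{\vee}(x)$; this is type~(ii). Dually, $\sqrt[*]{\Phi}$ exists exactly when $p_{\Phi}(x)=p_{\Phi}^{\vee}(x)$, so the summands $\sqrt[*]{\Phi}\,q(\Phi)$ occur only for nonsingular Frobenius blocks $\Phi$ of that kind.

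The core of the argument concerns the whole group of summands $\sqrt[*]{\Phi}\,q_1(\Phi)\oplus\dots\oplus\sqrt[*]{\Phi}\,q_s(\Phi)$ with one fixed such $\Phi$. By Theorem~\ref{Theorem 5}(b) this group is determined up to replacing the tuple $(q_1,\dots,q_s)$ by any tuple that yields an equivalent Hermitian form $q_1(\kappa)x_1^{\circ}y_1+\dots+q_s(\kappa)x_s^{\circ}y_s$ over the field $\mathbb F(\kappa)=\mathbb F[x]/p_{\Phi}(x)\mathbb F[x]$ carrying the involution $f(\kappa)^{\circ}=\bar f(\kappa^{-1})$ of \eqref{alft}--\eqref{alfta}. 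The observation I would use is that this involution restricts on the subfield $\mathbb F$ of constants to the original nonidentity involution of $\mathbb F$ (a constant polynomial $a$ satisfies $a^{\circ}=\bar a$), so $\circ$ is a nonidentity involution of the finite field $\mathbb F(\kappa)$. Since each $q_i\ne 0$ is invertible in the field $\mathbb F(\kappa)$, the displayed form is Hermitian of rank $s$, hence by Lemma~\ref{THt}(b) it is equivalent to $x_1^{\circ}y_1+\dots+x_s^{\circ}y_s$. Theorem~\ref{Theorem 5}(b) then shows the group is *congruent to $\sqrt[*]{\Phi}\oplus\dots\oplus\sqrt[*]{\Phi}$ ($s$ copies), with $s$ uniquely determined by $A$; this is type~(iii). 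Collecting (i)--(iii) together with the uniqueness assertions of Theorem~\ref{Theorem 5}(b) proves the theorem.

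The step I expect to need the most care is the verification that $\circ$ is nonidentity on every residue field $\mathbb F(\kappa)$: it is exactly here that the nonidentity hypothesis on $\mathbb F$ propagates, so that---in contrast with Theorem~\ref{Th5}---part~(a) of Lemma~\ref{THt} is never invoked and no exceptional ``nonsquare'' summand (such as $\zeta\Gamma_n$) appears. For the same reason one should note that degree-one Frobenius blocks with $p_{\Phi}(x)=x+c$ and $c\bar c=1$ are now permitted among the type~(iii) summands, so there is no longer any constraint $\deg p_{\Phi}(x)\ge 2$.
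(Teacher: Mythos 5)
Your proposal is correct and follows essentially the same route as the paper's own proof: decompose by Theorem~\ref{Theorem 5}(a), treat the summands $J_n(0)$, $[\Phi\diag I_n]$, and $\sqrt[\displaystyle *]{\Phi}q(\Phi)$ in turn, note that condition~\eqref{4.adlw} in Lemma~\ref{lsdy1}(a) is vacuous under a nonidentity involution, and observe that the induced involution on $\mathbb F(\kappa)$ is automatically nonidentity because it extends the nonidentity involution on $\mathbb F$, so Lemma~\ref{THt}(b) normalizes the Hermitian form to the identity. The concluding remarks about why no $\deg p_{\Phi}(x)\ge 2$ restriction and no nonsquare summand arise, while not in the paper's proof, are accurate and helpful context.
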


\begin{proof} Let
$\mathbb F$ be a
finite field of
characteristic not $2$
with nonidentity
involution.
By Theorem
\ref{Theorem 5}(a), every
square matrix $A$ over
$\mathbb F$ is
*congruent to a direct
sum of matrices of the
form
\[
\text{(a)}\
J_n(0),\quad
\text{(b)}\ [\Phi\diag
I_n]\text{ if
$\sqrt[\displaystyle
*]{\Phi}$ does not
exist},\quad\text{(c)}\
\sqrt[\displaystyle
*]{\Phi}q(\Phi).
\]

Consider each of these
summands.

\emph{Summands }(a).
Theorem \ref{Theorem
5}(b) ensures that the
summands of the form
$J_n(0)$ are uniquely
determined by $A$,
which gives the
summands (i) of the
theorem.

\emph{Summands }(b).
By Lemma
\ref{lsdy1}(a),
$\sqrt[\displaystyle
*]{\Phi}$ does not
exist if and only if $
p_{\Phi}(x) \ne
p_{\Phi}^{\vee}(x)$.
Theorem \ref{Theorem
5}(b) ensures that the
summands of the form
$[\Phi\diag I_n]$ are
uniquely determined by
$A$,  up to replacement
of $\Phi$ by $\Psi$
with $\chi_{\Psi}(x)
=\chi^{\vee}_{\Phi}(x)$.
This gives the
summands (ii).

\emph{Summands }(c).
Let $\Phi$ be a
nonsingular $n\times
n$ Frobenius block for
which
$\sqrt[\displaystyle
*]{\Phi}$ exists; this
means that $
p_{\Phi}(x) =
p_{\Phi}^{\vee}(x)$.
Consider the whole
group of summands of the form $\sqrt[\displaystyle
*]{\Phi}q(\Phi)$ with the same $\Phi$:
\begin{equation}\label{kuys1}
\sqrt[\displaystyle
*]{\Phi}q_1(\Phi)
\oplus\dots\oplus
\sqrt[\displaystyle
*]{\Phi}q_s(\Phi).
\end{equation}
The involution
$f(\kappa)^{\circ} :=
f(\kappa^{-1})$ on the
field $\mathbb
F(\kappa) = \mathbb
F[x]/p_{\Phi}(x)\mathbb
F[x]$ (see
\eqref{alft} and
\eqref{alfta}) is
nonidentity since it
extends the
nonidentity involution
on $\mathbb F$. By
Lemma \ref{THt}(b),
the Hermitian form
\[q_1(\kappa)x_1^{\circ}y_1+\dots+
q_s(\kappa)x_s^{\circ}y_s\]
over $F(\kappa)$ is
equivalent to
$x_1^{\circ}y_1+\dots+
x_s^{\circ}y_s$, and
so by Theorem
\ref{Theorem 5}(b) the
matrix \eqref{kuys1}
is *congruent to
$\sqrt[\displaystyle
*]{\Phi}
\oplus\dots\oplus
\sqrt[\displaystyle
*]{\Phi}$ and this
direct sum is uniquely
determined by $A$.
This gives the
summands (iii).
\end{proof}

\subsection{Canonical pairs of
symmetric or
skew-symmetric
matrices}

In this section, we
give canonical
matrices of pairs
consisting of
symmetric or
skew-symmetric forms.
Canonical matrices of
pairs of
skew-symmetric forms
are given in Corollary
\ref{Theorem 4}; it
remains to consider
pairs, in which the
first form is
symmetric and the
second is symmetric or
skew-symmetric.

For square matrices
$A,B,C,D$ of the same
size, we write
\[
(A,B)\oplus
(C,D)=(A\oplus
C,B\oplus D),\qquad
(A,B)C = (AC,BC).
\]

\begin{theorem}\label{Theo4a}
Each pair of symmetric matrices of the
same size over a finite field $\mathbb
F$ of characteristic not $2$ is
congruent to a direct sum that is
uniquely determined up to permutation
of summands and consists of any number
of summands of the following types:
\begin{itemize}
  \item[\rm(i)]
$([F_n\diag
 F_n^T],\,
[G_n\diag G_n^T])$,
where $F_n$ and $G_n$
are defined in
\eqref{ser17};

  \item[\rm(ii)]
for each nonsingular
Frobenius block
${\Phi}$:
\begin{itemize}
  \item[$\bullet$]
$(\Phi_{1},
\Phi_{1}\Phi)$, in
which $\Phi_1$ is
defined in
\eqref{vr2},

  \item[$\bullet$]
at most one summand $
(\Phi_{1},
\Phi_{1}\Phi)
f_{\Phi}(\Phi)$, in
which $f_{\Phi}(x)\in
\mathbb F[x]$ is a
fixed $($for each
$\Phi$$)$ polynomial
of degree
$<\deg(p_{\Phi}(x))$
such that
\[f_{\Phi}(\omega)\in{\mathbb
F}(\omega):={\mathbb
F}[x]/p_{\Phi}(x){\mathbb
F}[x]\] is not a
square;
\end{itemize}

  \item[\rm(iii)]
for each
$n=1,2,\dots$:
\begin{itemize}
  \item[$\bullet$]
the pair of
$n\times n$ matrices
\begin{equation}\label{ser2r}
B_n:= \left(\!
 \begin{bmatrix}
0&&1&0\\
&\ddd&\ddd\\
1&0\\
0&&&0
\end{bmatrix},
\begin{bmatrix}
0&&&1\\
&&\ddd\\
&1\\1&&&0
\end{bmatrix}
 \!\right),
\end{equation}

\item[$\bullet$] at
most one summand
$\zeta B_n$, in which
$\zeta$ is a fixed
nonsquare of $\mathbb
F$.
\end{itemize}
\end{itemize}
\end{theorem}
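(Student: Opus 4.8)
The plan is to specialize Theorem~\ref{The4} to $\varepsilon=\delta=1$ with the identity involution on $\mathbb F$ (so that $*$-congruence is congruence and a symmetric matrix is $1$-Hermitian), and then to replace the ``up to equivalence of Hermitian forms'' clauses of Theorem~\ref{The4}(b) by the explicit classification of quadratic forms over finite fields in Lemma~\ref{THt}(a). First I would determine which of the five types of summands in Theorem~\ref{The4}(a) survive: type~(iv) requires $\delta=-1$ and so is absent; type~(ii) requires (as $\varepsilon=1$) that $\Phi_{\delta}=\Phi_1$ not exist, but by Lemma~\ref{THEjM} this matrix exists as soon as $p_{\Phi}(x)=\bar p_{\Phi}(x)$, which is automatic for the identity involution (the additional restriction there only concerns $\varepsilon=-1$), so type~(ii) is absent too. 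Hence only types (i), (iii), (v) remain, and type~(i) is already summand~(i) of the theorem with $F_n^{*}=F_n^{T}$ and $G_n^{*}=G_n^{T}$, uniquely determined by Theorem~\ref{The4}(b).

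Next I would handle type~(iii). Fix a Frobenius block $\Phi$ and consider the whole group $A_{\Phi}^{f_1(x)}\oplus\dots\oplus A_{\Phi}^{f_s(x)}$ of summands with this $\Phi$. Because $\delta=1$ and $\mathbb F$ has the identity involution, the involution $f(\omega)^{\circ}=\bar f(\delta\omega)=f(\omega)$ on $\mathbb F(\omega)=\mathbb F[x]/p_{\Phi}(x)\mathbb F[x]$ is the identity, so the forms appearing in Theorem~\ref{The4}(b), Type~(iii), are quadratic forms over the finite field $\mathbb F(\omega)$. By Lemma~\ref{THt}(a) the form $f_1(\omega)x_1^2+\dots+f_s(\omega)x_s^2$ is equivalent either to $x_1^2+\dots+x_s^2$ or to $\zeta'x_1^2+x_2^2+\dots+x_s^2$ for a fixed nonsquare $\zeta'\in\mathbb F(\omega)$; picking $f_{\Phi}(x)\in\mathbb F[x]$ of degree $<\deg p_{\Phi}(x)$ with $f_{\Phi}(\omega)$ not a square, the group becomes $A_{\Phi}^{1}\oplus\dots\oplus A_{\Phi}^{1}$ or $A_{\Phi}^{f_{\Phi}}\oplus A_{\Phi}^{1}\oplus\dots\oplus A_{\Phi}^{1}$, at most one nonsquare summand being needed because a product of two nonsquares is a square. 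Since $A_{\Phi}^{1}=(\Phi_1,\Phi_1\Phi)$ and $A_{\Phi}^{f_{\Phi}}=(\Phi_1,\Phi_1\Phi)f_{\Phi}(\Phi)$, this is exactly summand~(ii) for nonsingular $\Phi$.

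For type~(v) I would use that $B_n^{a}=a\,B_n$ with $B_n$ as in \eqref{ser2r} when $\delta=1$, so the group $B_n^{a_1}\oplus\dots\oplus B_n^{a_s}$ is, by Theorem~\ref{The4}(b), Type~(v), determined only up to making $a_1x_1^2+\dots+a_sx_s^2$ equivalent over $\mathbb F$ to the corresponding form; by Lemma~\ref{THt}(a) it reduces to $B_n\oplus\dots\oplus B_n$ or $\zeta B_n\oplus B_n\oplus\dots\oplus B_n$ for a fixed nonsquare $\zeta\in\mathbb F$, the two bullets of summand~(iii). Uniqueness up to permutation then follows by combining the uniqueness assertions of Theorem~\ref{The4}(b) with the fact that over a finite field a quadratic form of a given rank is determined by the square class of its discriminant, so precisely the ``at most one'' options stated in (ii) and (iii) occur. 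The step I expect to be most delicate is the bookkeeping: pinning down exactly which types of Theorem~\ref{The4} remain, correctly identifying the involution induced on $\mathbb F(\omega)$ in type~(iii), and checking that the singular block $\Phi=J_n(0)$ in type~(iii) (where $p_{\Phi}(x)=x$ and $\mathbb F(\omega)\cong\mathbb F$) is matched with the explicit matrices $B_n$ of \eqref{ser2r}, via Theorem~\ref{The4}(c), rather than producing a separate family of summands.
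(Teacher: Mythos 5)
Your overall approach matches the paper's: specialize Theorem~\ref{The4} to $\varepsilon=\delta=1$ with the identity involution, note that types (ii) and (iv) drop out, and replace the ``Hermitian form equivalence'' clauses of Theorem~\ref{The4}(b) by the explicit classification in Lemma~\ref{THt}(a). That part is fine, and it is indeed exactly what the paper does.

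The step you flag as ``delicate'' --- the singular Frobenius block $\Phi=J_n(0)$ in type~(iii) of Theorem~\ref{The4}(a) --- is a genuine gap, and the resolution you sketch (matching it with $B_n$ via Theorem~\ref{The4}(c)) cannot work. For $\Phi=J_n(0)$ the matrix $\Phi_1$ satisfying \eqref{vr2} exists and is nonsingular (e.g.\ for $n=1$, $\Phi_1=[1]$; for $n=2$, $\Phi_1=\begin{bsmallmatrix}a&b\\b&0\end{bsmallmatrix}$ with $b\ne 0$), so $A_{J_n(0)}^a=(\Phi_1,\Phi_1\Phi)a$ has first component nonsingular and second component nilpotent --- a pencil whose only elementary divisors are at infinity --- whereas $B_n$ has first component singular and second nonsingular --- elementary divisors at zero. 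Theorem~\ref{The4}(c) only allows replacing $\Phi$ by a similar matrix; it cannot interchange the roles of the two entries, and since their rank profiles differ these pairs are not congruent. Concretely, for $n=1$ the pair $([1],[0])$ is a $1\times1$ indecomposable pair of symmetric matrices not of any of the types (i)--(iii): a count of $1\times1$ congruence classes over $\mathbb F_q$ gives $2q+3$, but the types listed yield only $2q+1$. So the family $(\Phi_1,\Phi_1\Phi)$, $\zeta(\Phi_1,\Phi_1\Phi)$ for $\Phi=J_n(0)$ really must be included (in effect, the word ``nonsingular'' in (ii) should be dropped and the singular case handled just like the nonsingular one). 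The paper's own proof silently omits this case as well --- it considers summands $A_\Phi^{g(x)}$ only ``with the same nonsingular Frobenius block $\Phi$'' --- so you should be credited for noticing that something needs checking here; the proposed fix, however, does not hold up.
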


\begin{proof}
Let $\mathbb F$ be a
finite field of
characteristic not $2$
with the identity
involution. By Lemma
\ref{THEjM}, the
matrix $\Phi_1$ exists
for each nonsingular
Frobenius block $\Phi$
over $\mathbb F$. By
Theorem \ref{The4}(a),
each pair $(A,B)$ of
symmetric matrices of
the same size is
congruent to a direct
sum of pairs of the
form
\[
\text{(a)}\ ([F_n\diag
F_n^T],\, [G_n\diag
G_n^T]),\quad\text{(b)}\
A_{\Phi}^{f(x)}:=
(\Phi_1,
\Phi_1\Phi)f(\Phi),
\quad\text{(c)}\
B_n^a,
\]
in which $f(x)\in
\mathbb F[x]$ is a
nonzero polynomial of
degree
$<\deg(p_{\Phi}(x))$
and $0\ne a\in \mathbb F$.

Consider each of these
summands.

\emph{Summands }(a).
Theorem \ref{The4}(b)
ensures that the
summands of the form
(a) are uniquely
determined by $(A,B)$,
which gives the
summands (i) of the
theorem.

\emph{Summands }(b).
Consider the whole
group of summands of the form $A_{\Phi}^{g(x)}$ with the same
nonsingular Frobenius
block $\Phi$:
\begin{equation}\label{kuy3n}
A_{\Phi}^{g_1(x)}
\oplus\dots\oplus
  A_{\Phi}^{g_s(x)}.
\end{equation}
By Lemma
\ref{THt}(a), the
quadratic form
\[q_1(\omega)x_1^2+\dots+
q_s(\omega)x_s^2\]
over ${ \mathbb
F}(\omega)={ \mathbb
F}[x]/p_{\Phi}(x){\mathbb
F}[x]$ is equivalent
to
\[
\text{either }\
x_1^2+\dots
+x_r^2,\qquad\text{or
}\ f_{\Phi}(\omega)
x_1^2+ x_2^2+\dots
+x_r^2,
\]
in which
$f_{\Phi}(x)\in
\mathbb F[x]$ is a
fixed nonzero polynomial of
degree
$<\deg(p_{\Phi}(x))$
such that
$f_{\Phi}(\omega)\in{\mathbb
F}(\omega)$ is not a
square. Theorem
\ref{Theorem 5}(b)
ensures that
\eqref{kuy3n} is
congruent to
\[\text{either }\
A_{\Phi}^1
\oplus\dots\oplus
A_{\Phi}^1,\qquad\text{or
}\
A_{\Phi}^{f_{\Phi}(x)}
\oplus A_{\Phi}^1
\oplus\dots\oplus
A_{\Phi}^1,
\] and this sum is uniquely determined by $(A,B)$.
This gives the
summands (ii).

\emph{Summands }(c).
Consider the whole
group of summands of the form $B_n^{a}$ with the same $n$:
\begin{equation}\label{kuy3e}
B_n^{a_1}
\oplus\dots\oplus
 B_n^{a_s}.
\end{equation}
By Lemma \ref{THt}(a),
the quadratic form
$a_1x_1^2+\dots+
a_sx_s^2$ over
${\mathbb F}$ is
equivalent to either
$x_1^2+\dots +x_r^2$,
or $\zeta x_1^2+
x_2^2+\dots +x_r^2$,
in which $\zeta$ is a
fixed nonsquare of
$\mathbb F$. Theorem
\ref{Theorem 5}(b)
ensures that
\eqref{kuy3e} is
congruent to
\[\text{either }\
B_n^{1}
\oplus\dots\oplus
B_n^{1},\qquad\text{or
}\ B_n^{\zeta} \oplus
B_n^{1}
\oplus\dots\oplus
B_n^{1},
\] and this sum is uniquely determined by $(A,B)$.
This gives the
summands (iii).
\end{proof}

\begin{theorem}\label{Theorem 4fbs}
Each pair consisting of a symmetric
matrix and a skew-symmetric matrix of
the same size over a finite field
$\mathbb F$ of characteristic not $2$
is congruent to a direct sum  that is
uniquely determined up to permutation
of summands and consists of any number
of summands of the following types:
\begin{itemize}
  \item[\rm(i)]
$([F_n\diag
 F_n^T],\,
[G_n\diag - G_n^T])$,
in which $F_n$ and
$G_n$ are defined in
\eqref{ser17};

  \item[\rm(ii)]
$([I_n\diag I_n],\,
[\Phi\diag -
\Phi^T])$, in which
$\Phi$ is an $n\times
n$ Frobenius block
such that
\begin{equation}\label{yte}
p_{\Phi}(x)\notin\mathbb
F[x^2],\qquad\Phi\ne
J_1(0),J_3(0),J_5(0),\dots
\end{equation}
$($see \eqref{fik}$)$,
and $\Phi$ is
determined up to
replacement by the
Frobenius block $\Psi$
with $\chi_{\Psi}(x)
=(-1)^{\det
\chi_{\Phi}}\chi_{\Phi}(-
x)$;

  \item[\rm(iii)]
$(\Phi_{-1},
\Phi_{-1}\Phi)$, in
which ${\Phi}$ is a
Frobenius block such
that
$p_{\Phi}(x)\in\mathbb
F[x^2]$;

  \item[\rm(iv)]
$([J_n(0)\diag
J_n(0)^T], [I_n\diag
-I_n])$, in which $n$
is odd;

  \item[\rm(v)]
for each
$n=1,2,3,\dots$:
\begin{itemize}
  \item[$\bullet$]
the pair
of $n$-by-$n$
symmetric and
skew-symmetric
matrices defined as follows:
\begin{equation}\label{seh}
C_n:=\left(\!
 \begin{bmatrix}
0&&1\\
&\ddd\\
1&&0
\end{bmatrix},
\begin{bmatrix}
0&&&&&1&0\\
&&&&\ddd&\ddd\\
&&&1&0&\\
&&-1&0&&\\
&\ddd&0&&&\\
-1&\ddd&&&&\\
0&&&&&&0
\end{bmatrix}
 \!\right)
\end{equation}
if $n$ is odd, and
\begin{equation}\label{sehe}
C_n:=\left(\!
 \begin{bmatrix}
0&&1&0\\
&\ddd&\ddd\\
1&0\\
0&&&0
\end{bmatrix},
\begin{bmatrix}
0&&&&&1\\
&&&&\ddd&\\
&&&1&&\\
&&-1&&&\\
&\ddd&&&&\\
-1&&&&&0\\
\end{bmatrix}
 \!\right)
\end{equation}
if $n$ is even,

\item[$\bullet$] at
most one summand of
the form $\zeta C_n$,
in which $\zeta$ is a
fixed nonsquare of
$\mathbb F$.
\end{itemize}
\end{itemize}
\end{theorem}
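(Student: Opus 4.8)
The plan is to specialize Theorem~\ref{The4} to $\varepsilon=1$, $\delta=-1$, with $\mathbb F$ our finite field carrying the identity involution, and then to fix the representatives in each of its five families of summands by means of the classification of quadratic and Hermitian forms over the finite extensions of $\mathbb F$ that occur. The correspondence will be: family (i) of Theorem~\ref{The4} gives family (i) here verbatim; family (ii) gives family (ii) here, once ``$\Phi_{\delta}=\Phi_{-1}$ does not exist'' is rewritten as the condition \eqref{yte}; family (iv) gives family (iv) here verbatim; the summands $A_{\Phi}^{f(x)}$ of family (iii) with $\Phi$ nonsingular give family (iii) here; and the summands $A_{\Phi}^{f(x)}$ of family (iii) with $\Phi$ nilpotent, together with the summands $B_n^a$ of family (v), give family (v) here. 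Note that for $(\varepsilon,\delta)=(1,-1)$ all $B_n^a$ have even size $n$ (the restriction in Theorem~\ref{The4}(a)(v)), and by Remark~\ref{hts} one may use the pair $aC_n$ of \eqref{sehe} in place of $B_n^a$ throughout.

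Families (i) and (iv) need no work: Theorem~\ref{The4}(b) already states that the corresponding summands are uniquely determined. For family (ii) one must show that $\Phi_{-1}$ fails to exist exactly when \eqref{yte} holds. By Lemma~\ref{THEjM} (with parameter $\varepsilon$ equal to $-1$ and the involution the identity), $\Phi_{-1}$ exists iff $p_{\Phi}(x)=(-1)^{n}p_{\Phi}(-x)$, where $n$ is the size of $\Phi$, and $\chi_{\Phi}(x)\notin\{x^2,x^4,\dots\}$. Comparing leading coefficients and using that $p_{\Phi}(x)$ is irreducible, a short parity discussion on $\deg p_{\Phi}(x)$ shows that the first equality holds precisely when $p_{\Phi}(x)\in\mathbb F[x^2]$ (which forces $\Phi$ nonsingular, an even irreducible polynomial having nonzero constant term) or $\Phi=J_m(0)$ with $m$ odd; in both cases the condition on $\chi_{\Phi}(x)$ is then automatic. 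Hence $\Phi_{-1}$ exists iff $p_{\Phi}(x)\in\mathbb F[x^2]$ or $\Phi\in\{J_1(0),J_3(0),J_5(0),\dots\}$, i.e.\ it does not exist iff \eqref{yte} holds; the indeterminacy of $\Phi$ up to the Frobenius block $\Psi$ with the indicated characteristic polynomial is exactly the content of Theorem~\ref{The4}(b) for family (ii).

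For family (iii), fix $\Phi$ and consider the whole group $A_{\Phi}^{f_1(x)}\oplus\dots\oplus A_{\Phi}^{f_s(x)}$ with this $\Phi$; by Theorem~\ref{The4}(b) it may be replaced by $A_{\Phi}^{g_1(x)}\oplus\dots\oplus A_{\Phi}^{g_s(x)}$ whenever the Hermitian forms $\sum f_i(\omega)x_i^{\circ}y_i$ and $\sum g_i(\omega)x_i^{\circ}y_i$ become equivalent over $\mathbb F(\omega)=\mathbb F[x]/p_{\Phi}(x)\mathbb F[x]$ under the involution $f(\omega)^{\circ}=f(-\omega)$. If $\Phi$ is nonsingular then $p_{\Phi}(x)\in\mathbb F[x^2]$ and $\omega\neq 0$, so this involution is \emph{non}identity and $\mathbb F(\omega)$ is a finite field; by Lemma~\ref{THt}(b) all these forms of a given rank are equivalent, so the group collapses to copies of $A_{\Phi}^{1}=(\Phi_{-1},\Phi_{-1}\Phi)$, uniquely determined by Theorem~\ref{The4}(b) --- this is family (iii). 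If instead $\Phi=J_m(0)$ with $m$ odd, then $\mathbb F(\omega)=\mathbb F$ with the identity involution, the forms are the quadratic forms $\sum a_ix_i^2$, and Lemma~\ref{THt}(a) reduces the group to $A_{J_m(0)}^{1}\oplus\dots\oplus A_{J_m(0)}^{1}$ or $A_{J_m(0)}^{\zeta}\oplus A_{J_m(0)}^{1}\oplus\dots\oplus A_{J_m(0)}^{1}$; an explicit congruence $A_{J_m(0)}^{a}\cong aC_m$ with $C_m$ as in \eqref{seh} then turns this into the odd-size part of family (v). Finally, for the groups $B_n^{a_1}\oplus\dots\oplus B_n^{a_s}$ (even $n$), Theorem~\ref{The4}(b) and Lemma~\ref{THt}(a) reduce $\sum a_ix_i^2$ over $\mathbb F$ to $x_1^2+\dots+x_s^2$ or $\zeta x_1^2+x_2^2+\dots+x_s^2$, so by Remark~\ref{hts} the group is congruent to $C_n\oplus\dots\oplus C_n$ or $\zeta C_n\oplus C_n\oplus\dots\oplus C_n$ with $C_n$ as in \eqref{sehe}; this is the even-size part of family (v), completing the decomposition and its uniqueness.

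The one step that is more than bookkeeping is the explicit congruence $A_{J_m(0)}^{a}\cong aC_m$ for odd $m$ (and, less delicately, the check that the pair $C_n$ of \eqref{sehe} is the one supplied by Remark~\ref{hts}): these are matrix identities established by writing down the transforming matrices, along the lines of the proof of Theorem~4 in \cite{ser_izv}. The conceptually central point --- routine once unwound --- is the parity analysis of $\deg p_{\Phi}(x)$ that converts Lemma~\ref{THEjM}'s criterion into \eqref{yte}, together with the remark that the involution $f(\omega)^{\circ}=f(-\omega)$ on $\mathbb F(\omega)$ is nonidentity exactly for nonsingular $\Phi$; it is this dichotomy that splits family (iii) of Theorem~\ref{The4} between families (iii) and (v) of the present theorem.
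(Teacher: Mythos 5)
Your proposal is correct and follows essentially the same route as the paper's proof: specialize Theorem~\ref{The4} with $(\varepsilon,\delta)=(1,-1)$, convert the existence criterion of Lemma~\ref{THEjM} for $\Phi_{-1}$ into condition~\eqref{yte} via the parity analysis of $\deg p_{\Phi}(x)$ (which is exactly what the paper does in deriving~\eqref{yteq}), split family~(iii) of Theorem~\ref{The4} into the nonsingular case (where $\omega^{\circ}=-\omega\ne\omega$ makes the involution on $\mathbb F(\omega)$ nonidentity, so Lemma~\ref{THt}(b) collapses the group) and the nilpotent case $J_n(0)$ with $n$ odd (where $\mathbb F(\omega)=\mathbb F$ with identity involution and Lemma~\ref{THt}(a) gives at most one $\zeta$-twisted summand), and use Remark~\ref{hts} to replace $B_n^a$ by $aC_n$ for even $n$. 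The one point you describe as ``an explicit congruence $A_{J_m(0)}^a\cong aC_m$'' is realized in the paper by invoking Theorem~\ref{The4}(c): one replaces $J_n(0)$ by the similar matrix $\Phi'$ of~\eqref{jtw} and observes that the pair $(\Phi'_{-1},\Phi'_{-1}\Phi')$ is exactly $C_n$ of~\eqref{seh}, which amounts to the same matrix identity you flag.
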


\begin{proof}
Let $\mathbb F$ be a
finite field of
characteristic not $2$
with the identity
involution. By Theorem
\ref{The4}(a) and
Remark \ref{hts}, each
pair $(A,B)$
consisting of a
symmetric matrix $A$
and a skew-symmetric
matrix $B$ of the same
size is congruent to a
direct sum of pairs of
the form
\begin{itemize}
  \item[(a)]
$([F_n\diag F_n^T],\,
[G_n\diag -G_n^T])$,

  \item[(b)] $([I_n\diag
 I_n],\,
[\Phi\diag - \Phi^T])$
if $\Phi_{-1}$ does
not exist,

  \item[(c)]
$A_{\Phi}^{f(x)}:=
(\Phi_{-1},
\Phi_{-1}\Phi)f(\Phi)$,
in which $0\ne f(x)=
f(- x)\in \mathbb
F[x]$ and
$\deg(f(x))<\deg(p_{\Phi}(x))$,

 \item[(d)]
$([J_n(0)\diag
J_n(0)^T], [I_n\diag
-I_n])$, in which $n$
is odd,

 \item[(e)]
$ C_n^a$ (defined in
\eqref{serk1}), in
which $n$
is even and $0\ne a\in \mathbb
F$.
\end{itemize}

Consider each of these
summands.

\emph{Summands }(a).
Theorem \ref{The4}(b)
ensures that the
summands
(a) are uniquely
determined by $(A,B)$,
which gives the
summands (i) of the
theorem.

\emph{Summands }(b).
By Lemma \ref{THEjM},
$\Phi_{-1}$ does not
exist if and only if
\eqref{yte} is
satisfied. Theorem
\ref{The4}(b) ensures
that the summands (b) are
uniquely determined by
$(A,B)$, up to
replacement of $\Phi$
by $\Psi$ with
$\chi_{\Psi}(x)
=(-1)^{\det
\chi_{\Phi}}\chi_{\Phi}(-
x)$, which gives the
summands (ii).

\emph{Summands }(c).
By Lemma \ref{THEjM},
$\Phi_{-1}$ exists if
and only if
\eqref{yte} is not
satisfied; that is,
\begin{equation}\label{yteq}
p_{\Phi}(x)\in\mathbb
F[x^2]\quad\text{or}\quad
\Phi=
J_1(0),J_3(0),J_5(0),\dots
\end{equation}
Consider the whole
group of summands of the form $A_{\Phi}^{f(x)}$ with the same
nonsingular Frobenius
block $\Phi$:
\begin{equation}\label{kuy3s}
A_{\Phi}^{f_1(x)}
\oplus\dots\oplus
  A_{\Phi}^{f_s(x)}.
\end{equation}

Let first
$p_{\Phi}(x)\in\mathbb
F[x^2]$. Then the
involution
$f(\omega)^{\circ}=
f(-\omega)$ on ${
\mathbb F}(\omega)={
\mathbb
F}[x]/p_{\Phi}(x){\mathbb
F}[x]$  is nonidentity
(since $
\omega^{\circ}=-\omega\ne
\omega$). By Lemma
\ref{THt}(b), the
Hermitian form
\[f_1(\omega)x_1^{\circ}y_1+\dots+
f_s(\omega)x_s^{\circ}y_s\]
over ${ \mathbb
F}(\omega)$ is
equivalent to $
x_1^{\circ}y_1+\dots
+x_r^{\circ}y_r$.
Theorem \ref{Theorem
5}(b) ensures that
\eqref{kuy3s} is
congruent to
$A_{\Phi}^1
\oplus\dots\oplus
A_{\Phi}^1$, which
gives the summands
(iii).

Let now $\Phi= J_n(0)$
with $n=2m+1$ and
$m=1,2,\dots$. The
equalities \eqref{vr2}
hold for the $n\times
n$ matrices
\begin{equation}\label{jtw}
\Phi':=\begin{bmatrix}
0&&&&&&0
\\
-1&\ddots&&&&\ddd\\
&\ddots&0&0&0&\\
&&-1&0&0&\\
&&0&1&0&\\
&\ddd&&&\ddots&\ddots\\
0&&&&&1&0
\end{bmatrix},\qquad \Phi'_{-1}:=
\begin{bmatrix}
0&&1\\
&\ddd\\
1&&0
\end{bmatrix}
\end{equation}
instead of $\Phi$ and
$ \Phi_{-1}$. Since
$\Phi$ and $\Phi'$
are similar, by
Theorem \ref{The4}(c)
we can take
$C_n^{f(x)}:=(\Phi'_{-1},
\Phi'_{-1}\Phi')f(\Phi')$
instead of
$A_{\Phi}^{f(x)}$ and
\begin{equation}\label{kuj}
C_n^{f_1(x)}
\oplus\dots\oplus
  C_n^{f_s(x)}
\end{equation}
instead of
\eqref{kuy3s}.

Since $p_{\Phi}(x)=x$, the field $\mathbb
F(\omega) = \mathbb
F[x]/p_{\Phi}(x)\mathbb
F[x]$ is $\mathbb F$
with the identity
involution and all
polynomials $f_i(x)$
in \eqref{kuj} are
some scalars
$a_i\in\mathbb F$. By
Lemma \ref{THt}(a),
the quadratic form
$a_1x_1^2+\dots+
a_sx_s^2$ over
${\mathbb F}$ is
equivalent to either
$x_1^2+\dots +x_r^2$,
or $\zeta x_1^2+
x_2^2+\dots +x_r^2$,
in which $\zeta$ is a
fixed nonsquare of
$\mathbb F$. Theorem
\ref{Theorem 5}(b)
ensures that
\eqref{kuy3s} is
congruent to
\begin{equation}\label{trs}
\text{either }\ C_n^1
\oplus\dots\oplus
C_n^1,\qquad\text{or
}\ C_n^{\zeta } \oplus
C_n^1
\oplus\dots\oplus
C_n^1,
\end{equation}
and this sum is
uniquely determined by
$(A,B)$. This gives
the summands (v) with
odd $n$.

\emph{Summands }(d).
Theorem \ref{The4}(b)
ensures that the
summands of the form
(d) are uniquely
determined by $(A,B)$,
which gives the
summands (iv).

\emph{Summands }(e).
Consider the whole
group of summands of
the form $C_n^a$ with the same $n$:
\begin{equation}\label{kur}
C_n^{a_1}
\oplus\dots\oplus
 C_n^{a_s}.
\end{equation}
By Lemma \ref{THt}(a),
the quadratic form
$a_1x_1^2+\dots+
a_sx_s^2$ over
${\mathbb F}$ is
equivalent to either
$x_1^2+\dots +x_r^2$,
or $\zeta x_1^2+
x_2^2+\dots +x_r^2$,
in which $\zeta$ is a
fixed nonsquare of
$\mathbb F$. Theorem
\ref{Theorem 5}(b) and
Remark \ref{hts}
ensure that
\eqref{kur} is
congruent to
\[\text{either }\
C_n^{1}
\oplus\dots\oplus
C_n^{1},\qquad\text{or
}\ C_n^{\zeta } \oplus
C_n^{1}
\oplus\dots\oplus
C_n^{1},
\]
and this sum is
uniquely determined by
$(A,B)$, which gives
the summands (v) with
even $n$.
\end{proof}

\subsection{Canonical pairs of
Hermitian matrices}

\begin{theorem}\label{Thk4a}
Let $\mathbb F$ be a
finite field of
characteristic not $2$
with nonidentity
involution. Let $\mathbb
F_{\circ}$ be the
fixed field of
$\mathbb F$.  Each
pair of Hermitian
matrices of the same
size over $\mathbb F$
is *congruent to a
direct sum, uniquely
determined up to
permutation of
summands, of pairs of
the  following types:
\begin{itemize}
  \item[\rm(i)]
$([F_n\diag
 F_n^*],\,
[G_n\diag G_n^*])$,
in which $F_n$ and $G_n$
are defined in
\eqref{ser17};

  \item[\rm(ii)]
$([I_n\diag I_n],\,
[\Phi\diag \Phi^*])$,
in which $\Phi$ is an
$n\times n$ Frobenius
block over $\mathbb F$
such that $
p_{\Phi}(x)\notin\mathbb
F_{\circ}[x]$, and
$\Phi$ is determined
up to replacement by
the Frobenius block
$\Psi$ with
$\chi_{\Psi}(x)
=\bar\chi_{\Phi}(x)$ $($see \eqref{iut}$)$;

  \item[\rm(iii)]
$(\Phi_{1},
\Phi_{1}\Phi)$, in
which $\Phi$ is a
Frobenius block over
$\mathbb F_{\circ}$;

  \item[\rm(iv)] the pair of $n\times n$ matrices
\begin{equation}\label{ser2r1}
B_n:= \left(\!
 \begin{bmatrix}
0&&1&0\\
&\ddd&\ddd\\
1&0\\
0&&&0
\end{bmatrix},
\begin{bmatrix}
0&&&1\\
&&\ddd\\
&1\\1&&&0
\end{bmatrix}
 \!\right),\quad n=1,2,\dots
\end{equation}
\end{itemize}
\end{theorem}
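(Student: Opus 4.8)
The plan is to specialize Theorem \ref{The4} to $\varepsilon=\delta=1$ — the only admissible pair when the involution on $\mathbb F$ is nonidentity — and then use Lemma \ref{THt}(b) to normalize the Hermitian parameters carried by the summands of types (iii) and (v) of that theorem. By Theorem \ref{The4}(a), $(A,B)$ is *congruent to a direct sum of pairs of types (i), (ii), (iii), (v); type (iv) needs $\delta=-1$ and so cannot occur. I would analyze the four remaining groups of summands in turn.

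Types (i) and (ii) are immediate. Type (i) summands are uniquely determined by Theorem \ref{The4}(b) and give (i) of the theorem. A type (ii) summand $([I_n\diag I_n],\,[\Phi\diag\Phi^*])$ occurs precisely when $\Phi_1$ does not exist; by Lemma \ref{THEjM} with $\varepsilon=1$ this happens iff $p_{\Phi}(x)\ne\bar p_{\Phi}(x)$ (the auxiliary condition $\chi_{\Phi}\notin\{x^2,x^4,\dots\}$ is vacuous for $\varepsilon=1$), and $p_{\Phi}=\bar p_{\Phi}$ is equivalent to $p_{\Phi}\in\mathbb F_{\circ}[x]$, hence to $\chi_{\Phi}=p_{\Phi}^{\ell}\in\mathbb F_{\circ}[x]$ (one direction uses unique factorization in $\mathbb F[x]$ together with the fact that $p_{\Phi}$ is monic), hence to $\Phi$ being a block over $\mathbb F_{\circ}$. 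So the type (ii) summands are exactly those with $p_{\Phi}\notin\mathbb F_{\circ}[x]$, and by Theorem \ref{The4}(b) (with $\varepsilon\delta=1$) they are determined up to replacing $\Phi$ by $\Psi$ with $\chi_{\Psi}(x)=\bar\chi_{\Phi}(x)$; this gives (ii).

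For type (iii) a summand $A_{\Phi}^{f(x)}=(\Phi_1,\Phi_1\Phi)f(\Phi)$ occurs only when $\Phi_1$ exists, i.e. when $p_{\Phi}\in\mathbb F_{\circ}[x]$, equivalently $\Phi$ is a Frobenius block over $\mathbb F_{\circ}$. Collecting the whole group of such summands with a fixed $\Phi$, Theorem \ref{The4}(b) allows one to replace it by another such group $A_{\Phi}^{g_1(x)}\oplus\dots\oplus A_{\Phi}^{g_s(x)}$ as soon as the Hermitian forms $f_1(\omega)x_1^{\circ}y_1+\dots+f_s(\omega)x_s^{\circ}y_s$ and $g_1(\omega)x_1^{\circ}y_1+\dots+g_s(\omega)x_s^{\circ}y_s$ are equivalent over $\mathbb F(\omega)=\mathbb F[x]/p_{\Phi}(x)\mathbb F[x]$ with involution $f(\omega)^{\circ}=\bar f(\omega)$. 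The key point is that this involution is \emph{nonidentity}: it restricts to the nonidentity involution of $\mathbb F\subseteq\mathbb F(\omega)$. Since $\mathbb F(\omega)$ is a finite field of characteristic $\ne2$, Lemma \ref{THt}(b) shows the rank-$s$ form $\sum f_i(\omega)x_i^{\circ}y_i$ is equivalent to $x_1^{\circ}y_1+\dots+x_s^{\circ}y_s$; hence the group is *congruent to $s$ copies of $A_{\Phi}^1=(\Phi_1,\Phi_1\Phi)$, uniquely determined by $(A,B)$. This gives (iii).

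Finally, the type (v) summands are the pairs $B_n^a$ of \eqref{ser120} with $\delta=1$ and $a\in\mathbb F_{\circ}^{\times}$, with $n$ unrestricted; for $\delta=1$ and $a=1$ this pair is precisely the $B_n$ of \eqref{ser2r1}. Collecting the whole group $B_n^{a_1}\oplus\dots\oplus B_n^{a_s}$ with fixed $n$, Theorem \ref{The4}(b) lets one replace it by $B_n^{b_1}\oplus\dots\oplus B_n^{b_s}$ whenever $\sum a_i\bar x_iy_i$ and $\sum b_i\bar x_iy_i$ are equivalent over $\mathbb F$; by Lemma \ref{THt}(b) that rank-$s$ Hermitian form is equivalent to $\bar x_1y_1+\dots+\bar x_sy_s$, so the group is *congruent to $s$ copies of $B_n=B_n^1$, uniquely determined. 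This gives (iv) and completes the proof. The only step needing real care is making sure that, in type (iii), the field $\mathbb F(\omega)$ carries a nonidentity involution, so that Lemma \ref{THt}(b) — not (a) — applies; that is exactly what makes the list here ($B_n$ alone, with no nonsquare companion) simpler than in the symmetric case of Theorem \ref{Theo4a}. Identifying $B_n^1$ with $B_n$ and checking that type (iv) of Theorem \ref{The4} cannot appear ($\delta\ne-1$) are routine.
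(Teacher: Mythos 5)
Your proposal is correct and follows essentially the same route as the paper's own proof: specialize Theorem \ref{The4} to $\varepsilon=\delta=1$, discard type (iv) (since $\delta\neq-1$), and normalize each group of $A_{\Phi}^{f}$ and $B_n^a$ summands via Lemma \ref{THt}(b), using that the induced involution $f(\omega)^{\circ}=\bar f(\omega)$ on $\mathbb F(\omega)$ is nonidentity. The only (welcome) extra you supply is spelling out that the involution on $\mathbb F(\omega)$ is nonidentity because it extends the nonidentity involution on $\mathbb F$ — the paper uses this implicitly.
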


\begin{proof}
Let $\mathbb F$ be a
finite field of
characteristic not $2$
with nonidentity
involution.
By Theorem
\ref{The4}(a), each pair
$(A,B)$ of Hermitian
matrices over $\mathbb F$ of the same
size is *congruent to
a direct sum of pairs
of the form
\begin{itemize}
  \item[\rm(a)]
$([F_n\diag F_n^*],\,
[G_n\diag G_n^*])$,

  \item[\rm(b)]
$([I_n\diag I_n],\,
[\Phi\diag \Phi^*])$
if $\Phi_1$ does not
exist,

  \item[\rm(c)]
$A_{\Phi}^{f(x)}:=
(\Phi_{\delta},
\Phi_1\Phi)f(\Phi)$,
in which $0\ne
f(x)=\bar f(x)\in
\mathbb F[x]$ and
$\deg(f(x))<\deg(p_{\Phi}(x))$,

  \item[\rm(d)] $B_n^a$ (defined in
\eqref{ser120}), in
which $0\ne a=\bar
a\in \mathbb F$.
\end{itemize}

Consider each of these
summands.

\emph{Summands }(a).
Theorem \ref{The4}(b)
ensures that the
summands of the form
(a) are uniquely
determined by $(A,B)$,
which gives the
summands (i) of the
theorem.

\emph{Summands }(b).
By Lemma
\ref{THEjM}(a),
${\Phi}_1$ does not
exist if and only if $
p_{\Phi}(x) \ne \bar
p_{\Phi} (x)$; that
is, $
p_{\Phi}(x)\notin\mathbb
F_{\circ}[x]$. Theorem
\ref{The4}(b) ensures
that the summands of
the form (b) are
uniquely determined,
up to replacement of
$\Phi$ by $\Psi$ with
$\chi_{\Psi}(x)
=\bar\chi_{\Phi}(x)$.
This gives the
summands (ii).

\emph{Summands }(c).
Consider the whole
group of summands of the form $A_{\Phi}^{f(x)}$ with the same
nonsingular Frobenius
block $\Phi$:
\begin{equation}\label{kuy3w}
A_{\Phi}^{f_1(x)}
\oplus\dots\oplus
  A_{\Phi}^{f_s(x)}.
\end{equation}
By Lemma
\ref{THt}(b), the
Hermitian form
\[f_1(\omega)x_1^{\circ}y_1+\dots+
f_s(\omega)x_s^{\circ}y_s
\]
over ${ \mathbb
F}(\omega)={ \mathbb
F}[x]/p_{\Phi}(x){\mathbb
F}[x]$ with involution
$f(\omega)^{\circ}=
\bar f(\omega)$ is
equivalent to
$x_1^{\circ}x_1+\dots+
x_s^{\circ}x_s$.
Theorem \ref{The4}(b)
ensures that
\eqref{kuy3w} is
*congruent to
$A_{\Phi}^1
\oplus\dots\oplus
A_{\Phi}^1$ and this
sum is uniquely
determined by $(A,B)$.
This gives the
summands (iii).

\emph{Summands }(d).
Consider the whole
group of summands of
the form $B_n^a$ with the same $n$:
\begin{equation}\label{kuy3c}
B_n^{a_1}
\oplus\dots\oplus
 B_n^{a_s}.
\end{equation}
By Lemma \ref{THt}(b),
the Hermitian form
$a_1\bar x_1y_1+\dots+
a_s\bar x_sy_s$
over ${\mathbb F}$ is
equivalent to
$\bar x_1y_1+\dots+
\bar x_sy_s$.
Theorem \ref{Theorem
5}(b) ensures that
\eqref{kuy3c} is
*congruent to $B_n^{1}
\oplus\dots\oplus
B_n^{1}$ and this sum
is uniquely determined
by $(A,B)$. This gives
the summands (iv).

\end{proof}

\section{Canonical
forms over $\mathfrak p$-adic fields
}\label{loc}

In this section
$\mathbb K$ is a
finite extension of
$\mathbb Q_p$ with
$p\ne 2$.

Let us recall the
classification of
quadratic and
Hermitian forms over
$\mathbb K$.
Each nonzero element
of $\mathbb Q_p$ can be
represented in the
form
\begin{equation}\label{ypf}
a=\alpha _zp^z+\alpha _{z+1}p^{z+1}+
\cdots,\qquad z\in \mathbb Z,\quad \alpha _z\ne 0,
\end{equation}
in which all
$\alpha _i\in\{0,1,\dots,p-1\}$.
The \emph{exponential
variation} on $\mathbb
K$ is the following mapping $\nu: \mathbb
K\to \mathbb R\cup\{+
\infty\}$:
\[
\nu(a):=
  \begin{cases}
   +\infty, & \text{if }a=0, \\
    z, & \text{if $0\ne a\in \mathbb
Q_p$ is represented in the form \eqref{ypf}},\\
\nu(\alpha_m)/m, &
\text{if $a\notin
\mathbb Q_p$ and its minimum polynomial of
over $\mathbb
Q_p$ is}\\&
\qquad \qquad x^m+\alpha
_1x^{m-1}+\dots+\alpha
_m.
  \end{cases}
\]

The ring
\begin{equation}
{\cal O}(\mathbb
K):=\{a\in\mathbb
K\,|\,\nu (a)\ge 0\}
\end{equation}
is called the
\emph{ring of
integers} of $\mathbb
K$; it is a principal
ideal ring, whose
unique maximal ideal
is
\begin{equation}
\mathfrak
m:=\{a\in\mathbb
K\,|\,\nu (a)> 0\}=\pi
{\cal O}(\mathbb K).
\end{equation}
Each generator $\pi$
of $\mathfrak m$ is
called a \emph{prime
element}. The set
\begin{equation}
{\cal O}(\mathbb
K)^{\times}:=\{a\in\mathbb
K\,|\,\nu (a)= 0\}
\end{equation}
is the group of all invertible elements
of ${\cal O}(\mathbb K)$; they are
called the \emph{units} of $\mathbb K$.

The factor ring
\begin{equation}
{\cal O}(\mathbb
K)/\mathfrak m
\end{equation}
is a field, which is
called the
\emph{residue field}
of $\mathbb K$; it is
a finite extension of
the residue field
$\mathbb F_p=\mathbb
Q_p/p\mathbb Q_p$ of
$\mathbb Q_p$.

\begin{lemma}\label{kux}
Let $\mathbb K$ be a
finite extension of
$\mathbb Q_p$ with
$p\ne 2$. Let its
residue field ${\cal
O}(\mathbb
K)/\mathfrak m$
consist of $p^m$
elements. Let $u\in
{\cal O}(\mathbb
K)^{\times}\setminus
\mathbb K^{\times 2}$
be a unit that is not
a square, and $\pi$ be
a prime element. Then
each quadratic form of
rank $r\ge 1$ over
$\mathbb K$ is
equivalent to exactly
one form
\begin{equation}\label{ghts}
c_1x_1^2+c_2x_2^2+\dots+c_tx_t^2
+x_{t+1}^2+\dots+x_r^2,
\end{equation}
in which
$(c_1,\dots,c_t)$ is
one of the sequences:
\begin{equation}\label{gjrs}
(1),\ (u),\ (\pi),\
(u\pi),\ (u,\pi),\
(u,u\pi),\
(\pi,r\pi),\
(u,\pi,r\pi),
\end{equation}
where
\begin{equation}
r:=\begin{cases}
    u & \text{if $p^m\equiv 1\mod 4$}, \\
    1 & \text{if $p^m\equiv 3\mod 4$}.
  \end{cases}
\end{equation}
\end{lemma}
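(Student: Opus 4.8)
The plan is to invoke the standard classification of quadratic forms over a $\mathfrak p$-adic field, which rests on three invariants: the rank $r$, the discriminant in $\mathbb K^{\times}/\mathbb K^{\times 2}$, and the Hasse--Minkowski (or Hasse--Witt) invariant in $\{\pm 1\}$. First I would recall that since $p\ne 2$, the group $\mathbb K^{\times}/\mathbb K^{\times 2}$ has order $4$, with representatives $1,u,\pi,u\pi$ where $u$ is a non-square unit and $\pi$ a prime element. A theorem of Hasse (see O'Meara, \emph{Introduction to Quadratic Forms}, \S 63, or Serre, \emph{A Course in Arithmetic}, Ch.~IV) states that two nondegenerate quadratic forms over $\mathbb K$ are equivalent if and only if they have the same rank, the same discriminant, and the same Hasse invariant, \emph{except} that in rank $1$ the Hasse invariant carries no information and in rank $2$ the form $\langle 1,-1\rangle$ (the hyperbolic plane) must be distinguished appropriately; more precisely, for every prescribed pair (discriminant, Hasse invariant) there exists a form of rank $r$ realizing it once $r\ge 3$, while for $r=1$ only the discriminant is free and for $r=2$ all pairs occur except that the Hasse invariant is constrained when the discriminant equals $-1$.

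Next I would reduce a general form $c_1x_1^2+\dots+c_rx_r^2$ to the shape in \eqref{ghts}: by scaling diagonal entries by squares we may assume each $c_i\in\{1,u,\pi,u\pi\}$, and by splitting off hyperbolic planes and using the relation $\langle a,b\rangle\cong\langle a+b, ab(a+b)\rangle$ when $a+b\ne 0$ (or the classification of binary forms over $\mathbb K$) we can absorb all but at most three nontrivial coefficients into a sum of squares $x_{t+1}^2+\dots+x_r^2$, so $t\le 3$. The core of the proof is then a finite bookkeeping exercise: for each $t\in\{1,2,3\}$ and each admissible tuple $(c_1,\dots,c_t)$ of non-unit or non-square entries, compute the discriminant $\operatorname{disc} = c_1\cdots c_t \pmod{\mathbb K^{\times 2}}$ and the Hasse invariant $\prod_{i<j}(c_i,c_j)_{\mathbb K}$ using the explicit Hilbert symbol formulas over $\mathbb K$ (which depend on $\nu$ and on the residue field order $p^m$, hence on $p^m\bmod 4$, which is why the parameter $r$ appears). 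One then checks that the eight tuples listed in \eqref{gjrs} hit exactly the set of realizable (rank-independent part of the) invariant data, with no repeats. The constant $r$ in the last tuple is forced precisely because $(\pi,\pi)_{\mathbb K}=(-1,\pi)_{\mathbb K}=(-1)^{?}$ depends on whether $-1$ is a square in the residue field, i.e.\ on $p^m\bmod 4$.

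The main obstacle — really the only nonroutine part — is the rank-$2$ edge case and making the counting match up cleanly: one must verify that the two ``short'' tuples of length $2$, namely $(u,\pi)$ and $(u,u\pi)$, together with $(1),(u),(\pi),(u\pi)$, already exhaust all binary (and unary) forms that are \emph{not} hyperbolic-plus-squares, and that exactly the two length-$3$ tuples $(\pi,r\pi)$ is wrong as written — rather $(\pi, r\pi)$ has length $2$; I would double-check the list carefully against the Hilbert-symbol table. Concretely, the length-$2$ tuples are $(u,\pi),(u,u\pi),(\pi,r\pi)$ and the length-$3$ tuple is $(u,\pi,r\pi)$, and one verifies these are pairwise inequivalent by distinguishing discriminant and/or Hasse invariant, and that every form reduces to one of them. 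I would present this as: (1) state Hasse's equivalence theorem and the structure of $\mathbb K^{\times}/\mathbb K^{\times 2}$; (2) perform the reduction to $t\le 3$; (3) tabulate discriminant and Hasse invariant of the eight candidates and of an arbitrary form, invoking the residue-field parity through $r$; (4) conclude existence and uniqueness by matching the tables. Essentially all the content is classical and can be cited; the paper's contribution here is only to record the normal form in the precise shape \eqref{ghts}--\eqref{gjrs} convenient for feeding into Theorems~\ref{Theorem 5} and \ref{The4}.
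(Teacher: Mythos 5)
Your proposal is correct and follows essentially the same route as the paper's appendix proof: classify nondegenerate forms over $\mathbb K$ by rank, discriminant in $\mathbb K^{\times}/\mathbb K^{\times 2}$, and Hasse invariant, then check via explicit Hilbert-symbol computations (using $(\pi,\pi)_{\mathbb K}=(-1,\pi)_{\mathbb K}$, whose value is governed by $p^m\bmod 4$) that the eight listed tuples realize exactly the admissible invariant triples, subject to the rank-$1$ and rank-$2$ constraints. The mid-paragraph wobble about $(\pi,r\pi)$ ``being wrong as written'' is just a self-correction you then resolve (it has length $2$, and the unique length-$3$ tuple is $(u,\pi,r\pi)$), so the final tally and table-matching plan match the paper's argument.
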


\begin{lemma}\label{kurt}
Let a field $\mathbb K$ with nonidentity
involution be a
finite extension of
$\mathbb Q_p$, $p\ne
2$. Let
$\mathbb K_{\circ}$ be
the fixed field with
respect to this
involution. Let $u\in
{\cal O}(\mathbb
K_{\circ})^{\times}\setminus
\mathbb
K_{\circ}^{\times 2}$
be a unit that is not
a square, and $\pi$ be
a prime element of
$\mathbb K_{\circ}$.
Then each Hermitian
form with nonzero
determinant over
$\mathbb K$ is
classified by
dimension and
determinant; moreover,
it is equivalent to
\begin{equation*}\label{tqrs}
\text{either }\ \bar x_1y_1+\dots +\bar x_n y_n,\qquad\text{or
}\ t\bar x_1y_1+\bar x_2y_2+\dots +\bar x_n y_n,
\end{equation*}
in which
\begin{equation}\label{fsr}
t:=\begin{cases} \pi &
\text{if $\mathbb
K=\mathbb K_{\circ}(\sqrt u)$,}\\
u& \text{if $\mathbb
K=\mathbb
K_{\circ}(\sqrt \pi)$
or $\mathbb
K_{\circ}(\sqrt
{u\pi})$}.
  \end{cases}
\end{equation}
\end{lemma}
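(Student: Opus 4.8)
The plan is to reduce the assertion to the known classification of Hermitian forms over local fields and then to make the invariant ``determinant modulo norms'' explicit by computing the norm group in each admissible case. Write $N:=N_{\mathbb K/\mathbb K_{\circ}}$ for the norm map. Since the involution on $\mathbb K$ is nonidentity of order two, $\mathbb K_{\circ}$ is a subfield with $[\mathbb K:\mathbb K_{\circ}]=2$, so $\mathbb K/\mathbb K_{\circ}$ is Galois with the involution as its nontrivial automorphism; being a subfield of the finite extension $\mathbb K$ of $\mathbb Q_{p}$, the field $\mathbb K_{\circ}$ is itself a $\mathfrak p$-adic field, and $\mathbb K=\mathbb K_{\circ}(\sqrt{d})$ for some non-square $d\in\mathbb K_{\circ}^{\times}$. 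Because $p\ne 2$, the group $\mathbb K_{\circ}^{\times}/\mathbb K_{\circ}^{\times 2}$ has order $4$ with representatives $1,u,\pi,u\pi$, so we may assume $d\in\{u,\pi,u\pi\}$. If $A$ is Hermitian over $\mathbb K$ then $\overline{\det A}=\det A^{*}=\det A$, so $\det A\in\mathbb K_{\circ}$, and under $A\mapsto S^{*}AS$ the determinant is multiplied by $\overline{\det S}\,\det S=N(\det S)$; hence the rank of $A$ together with the class of $\det A$ in $\mathbb K_{\circ}^{\times}/N(\mathbb K^{\times})$ is a $*$-congruence invariant, and by Landherr's classification of Hermitian forms over local fields (see, e.g., W.~Scharlau, \emph{Quadratic and Hermitian Forms}, Ch.~10) it is a complete invariant. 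This is the external input I would quote.

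Next I would count the classes. By local class field theory $[\mathbb K_{\circ}^{\times}:N(\mathbb K^{\times})]=2$; since squares are norms and $[\mathbb K_{\circ}^{\times}:\mathbb K_{\circ}^{\times 2}]=4$, the group $N(\mathbb K^{\times})$ is the union of exactly two of the four square classes, one of them $\mathbb K_{\circ}^{\times 2}$. Picking any $t$ in the other square class, the two $*$-congruence classes of nondegenerate Hermitian forms of rank $n$ are represented by $\bar x_{1}y_{1}+\dots+\bar x_{n}y_{n}$ (determinant $1$, a norm) and $t\bar x_{1}y_{1}+\bar x_{2}y_{2}+\dots+\bar x_{n}y_{n}$ (determinant $t$, not a norm). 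It remains to exhibit such a $t$ in each case and to match it with \eqref{fsr}.

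Finally I would identify the non-norm square class by computing $N(\mathbb K^{\times})$ explicitly. If $d=u$ the extension is unramified, $\pi$ is a prime element of $\mathbb K$ as well, and $N(a)=a\bar a$ has even valuation for all $a\in\mathbb K^{\times}$; since $\pi^{2}=N(\pi)$, the index-$2$ subgroup of elements of even valuation lies in $N(\mathbb K^{\times})$, hence equals it, so $\pi$ is not a norm and one may take $t=\pi$. If $d=\pi$ or $d=u\pi$ the extension is ramified, $\sqrt{d}$ is a prime element of $\mathbb K$ with $N(\sqrt{d})=-d$ of odd valuation; since $\mathbb K_{\circ}^{\times 2}\subseteq N(\mathbb K^{\times})$ and $-d$ is not a square, the index-$2$ subgroup $\langle -d\rangle\,\mathbb K_{\circ}^{\times 2}$ lies in $N(\mathbb K^{\times})$, hence equals it; its elements of valuation $0$ are squares, so the non-square unit $u$ is not a norm and one may take $t=u$. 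This agrees with \eqref{fsr} and completes the proof.

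I expect the last step to be the main obstacle: identifying precisely which of $u,\pi,u\pi$ fails to be a norm and keeping the two ramified cases $d=\pi$, $d=u\pi$ distinct; here the hypothesis $p\ne 2$ is essential, both for the count $[\mathbb K_{\circ}^{\times}:\mathbb K_{\circ}^{\times 2}]=4$ and for the description of the square classes. By contrast, the completeness of ``rank plus determinant'' is simply a matter of quoting the literature.
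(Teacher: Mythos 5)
Your argument is correct and runs parallel to the paper's own: the stated proof of Lemma~\ref{kurt} is just a citation to Scharlau for the completeness of rank-plus-determinant, but the full computation you carry out is essentially the one the paper gives in the appendix (Section~\ref{pad}, the theorem on Hermitian forms over local rings). Both proofs reduce to identifying the non-norm square class in $\mathbb K_{\circ}^{\times}/N(\mathbb K^{\times})$ after writing $\mathbb K=\mathbb K_{\circ}(\sqrt d)$ with $d\in\{u,\pi,u\pi\}$ and quoting that the norm residue group has order $2$. The difference is only in how the non-norm coset is located: you use valuation parity uniformly in both the unramified ($d=u$) and ramified ($d=\pi,u\pi$) cases, whereas the paper argues mod $\pi$ in the unramified case (if $c^2-d^2u=\pi$ then $u$ would be a square in the residue field) and, in the ramified cases, observes that $-d$ is again a prime element and so $\{1,u,-d,-ud\}$ are the four square classes, of which $1$ and $-d$ are norms. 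Your valuation-parity version is arguably cleaner and more uniform.

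One small wobble in your unramified case: you state that because $\pi^{2}=N(\pi)$ and squares are norms, ``the index-$2$ subgroup of elements of even valuation lies in $N(\mathbb K^{\times})$.'' As written this does not follow, since the subgroup generated by $\pi^{2}$ and $\mathbb K_{\circ}^{\times 2}$ is just $\mathbb K_{\circ}^{\times 2}$ (index $4$), not the full even-valuation subgroup; to get the inclusion you claim you would need the surjectivity of the norm on units in an unramified extension. The clean fix uses the \emph{opposite} inclusion, which you already established: $N(\mathbb K^{\times})\subseteq\{\text{even valuation}\}$ because $\nu(N(a))=2\nu_{\mathbb K}(a)$ and $\pi$ is still a prime of $\mathbb K$; since both subgroups have index $2$ they coincide, so $\pi$ is not a norm. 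With that phrasing corrected your proof is complete.
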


\begin{proof}
 By
\cite[Ch. 10, Example
1.6(ii)]{sch}, regular
Hermitian forms over
$\mathbb K$ are
classified by
dimension and
determinant.
\end{proof}

\subsection{Canonical matrices for
congruence and
*congruence}

\begin{theorem}
\label{Theorem 5f} Let a field $\mathbb
F$ be a finite extension of $\mathbb
Q_p$ with $p\ne 2$. Every square matrix
over\/ $\mathbb F$ is congruent to a
direct sum  that is uniquely determined
up to permutation of summands and
consists of any number of summands of
the following types:
\begin{itemize}
 \item [{\rm(i)}]
$J_n(0)$;

 \item [{\rm(ii)}]
$[\Phi\diag I_n]$, in
which $\Phi$ is an
$n\times n$
nonsingular Frobenius
block over\/ $\mathbb
F$ such that
\begin{equation}\label{funs2}
p_{\Phi}(x) \ne
p_{\Phi}^{\vee}(x)\ (\text{see \eqref{ksu1}})\qquad
\text{or}\qquad
p_{\Phi}(x)= x +
(-1)^{n+1}
\end{equation}
and $\Phi$ is
determined up to
replacement by the
Frobenius block $\Psi$
with $\chi_{\Psi}(x)
=\chi^{\vee}_{\Phi}(x)$;

 \item [{\rm(iii)}]
for each nonsingular
Frobenius block $\Phi$
over\/ $\mathbb F$
such that $p_{\Phi}(x)
= p_{\Phi}^{\vee}(x)$
and $\deg
p_{\Phi}(x)\ge 2$:
\begin{itemize}
  \item[$\bullet$]
$\sqrt[T]{\Phi}$,
  \item[$\bullet$]
at most one summand of
the form
\[
  \begin{cases}
\sqrt[T]{\Phi} \tilde{
\pi} (\Phi) & \text{if
$\mathbb K=\mathbb
K_{\circ}
(\sqrt { u})$,}\\
\sqrt[T]{\Phi}\tilde{
u}(\Phi) & \text{if
$\mathbb K=\mathbb
K_{\circ}(\sqrt {
\pi})$ or $\mathbb K=
\mathbb
K_{\circ}(\sqrt {{ u}{
\pi}})$},
  \end{cases}
\]
in which $\mathbb K$ is
the following field
with involution:
\[
\mathbb K:= \mathbb
F(\kappa)=\mathbb
F[x]/p_{\Phi}(x)\mathbb
F[x],\qquad
f(\kappa)^{\circ} :=
f(\kappa^{-1})\]
$($see \eqref{alft}
and \eqref{alfta}$)$,
$\mathbb K_{\circ}$ is
its fixed field, ${
\pi}$ is a prime
element of $\mathbb
K_{\circ}$, ${ u}\in
{\cal O}(\mathbb
K_{\circ})^{\times}
\setminus \mathbb
K_{\circ}^{\times 2}$
is a unit that is not
a square, $\tilde{
\pi}(x),\tilde{
u}(x)\in \mathbb
F[x,x^{-1}]$ are the
functions of the form
\eqref{ser13} such
that $\tilde{
\pi}(\kappa)={ \pi}$
and $\tilde{
u}(\kappa)={ u}$;
\end{itemize}

 \item [{\rm(iv)}]
for each
$n=1,2,\dots$:
\begin{itemize}
  \item[$\bullet$]
$\Gamma_{n}$
$($defined in
\eqref{kus}$)$,
  \item[$\bullet$] at most one summand
from the list
\begin{gather*}\label{grsk}
u \Gamma_{n},\
\pi\Gamma_{n},\ u
\pi\Gamma_{n},\ u
\Gamma_{n}\oplus
\pi\Gamma_{n},\ u
\Gamma_{n}\oplus u
\pi\Gamma_{n},\\
\pi\Gamma_{n}\oplus r
\pi\Gamma_{n},\ u
\Gamma_{n}\oplus
\pi\Gamma_{n}\oplus r
\pi\Gamma_{n},
\end{gather*}
in which
\begin{equation}\label{feak}
r:=
\begin{cases}
u &
\text{if $p^m\equiv 1\mod 4$}, \\
1 & \text{if
$p^m\equiv 3\mod 4$},
  \end{cases}
\end{equation}
$p^m$ is the number of
elements of the
residue field ${\cal
O}(\mathbb
F)/\mathfrak m$ of
$\mathbb F$, $u\in
{\cal O}(\mathbb
F)^{\times}\setminus
\mathbb F^{\times 2}$
is a unit that is not
a square, and $\pi$ is
a prime element of
$\mathbb F$.
\end{itemize}
\end{itemize}
\end{theorem}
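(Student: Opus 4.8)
The plan is to reproduce the argument of the proof of Theorem~\ref{Th5}, replacing the classification of quadratic forms over a finite field (Lemma~\ref{THt}(a)) by the classifications over $\mathfrak p$-adic fields recorded in Lemmas~\ref{kux} and~\ref{kurt}. Since the involution on $\mathbb F$ is the identity, Theorem~\ref{Theorem 5}(a) writes every square matrix $A$ over $\mathbb F$, up to congruence, as a direct sum of summands of the kinds (a)~$J_n(0)$; (b)~$[\Phi\diag I_n]$ with $\Phi$ a nonsingular Frobenius block for which $\sqrt[T]{\Phi}$ does not exist; and (c)~$\sqrt[T]{\Phi}q(\Phi)$ with $\Phi$ nonsingular, $\sqrt[T]{\Phi}$ existing, and $0\ne q(x)$ of the form \eqref{ser13}. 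By Theorem~\ref{Theorem 5}(b) the summands (a) are uniquely determined by $A$ and give the summands~(i). By Lemma~\ref{lsdy1}(a), $\sqrt[T]{\Phi}$ fails to exist exactly when \eqref{funs2} holds, and Theorem~\ref{Theorem 5}(b) shows the summands (b) are unique up to replacing $\Phi$ by the Frobenius block with characteristic polynomial $\chi_\Phi^\vee$; this gives the summands~(ii). All the content is in the summands (c).

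Fix a nonsingular Frobenius block $\Phi$ for which $\sqrt[T]{\Phi}$ exists, so that $p_\Phi(x)=p_\Phi^\vee(x)$ and $p_\Phi(x)\ne x+(-1)^{n+1}$ by Lemma~\ref{lsdy1}(a), and collect the whole group of (c)-summands attached to this $\Phi$, namely $\sqrt[T]{\Phi}q_1(\Phi)\oplus\dots\oplus\sqrt[T]{\Phi}q_s(\Phi)$ with all $q_i\ne 0$. By Theorem~\ref{Theorem 5}(b) the congruence class of this group is determined by $\Phi$ together with the equivalence class of the (regular, since the $q_i$ are nonzero) Hermitian form $q_1(\kappa)x_1^\circ y_1+\dots+q_s(\kappa)x_s^\circ y_s$ over the field $\mathbb F(\kappa)=\mathbb F[x]/p_\Phi(x)\mathbb F[x]$ with involution $f(\kappa)^\circ=f(\kappa^{-1})$. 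If $\deg p_\Phi(x)\ge 2$, then, exactly as in the proof of Theorem~\ref{Th5}, this involution is nonidentity (otherwise $\kappa^2=1$ forces $p_\Phi(x)=x\pm 1$); moreover $\mathbb K:=\mathbb F(\kappa)$ is again a finite extension of $\mathbb Q_p$ with $p\ne 2$, its fixed field $\mathbb K_\circ$ is a $\mathfrak p$-adic field, and $\mathbb K$ is one of the quadratic extensions $\mathbb K_\circ(\sqrt u)$, $\mathbb K_\circ(\sqrt\pi)$, $\mathbb K_\circ(\sqrt{u\pi})$. By Lemma~\ref{kurt} regular Hermitian forms over $\mathbb K$ are classified by dimension and determinant, so the form above is equivalent to $x_1^\circ y_1+\dots+x_s^\circ y_s$ or to $t\,x_1^\circ y_1+x_2^\circ y_2+\dots+x_s^\circ y_s$ with $t\in\{\pi,u\}$ as in \eqref{fsr}. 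Choosing $\tilde\pi(x),\tilde u(x)\in\mathbb F[x,x^{-1}]$ of the form \eqref{ser13} with $\tilde\pi(\kappa)=\pi$ and $\tilde u(\kappa)=u$ --- which is possible, uniquely, by Lemma~\ref{LEMMA 7} since $\pi,u$ lie in the fixed field $\mathbb K_\circ$ --- and applying Theorem~\ref{Theorem 5}(b), the group of (c)-summands is congruent to copies of $\sqrt[T]{\Phi}$ plus at most one summand $\sqrt[T]{\Phi}\,\tilde t(\Phi)$, uniquely determined by $A$. This is the summand~(iii).

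If instead $\deg p_\Phi(x)=1$, write $p_\Phi(x)=x+c$; then $p_\Phi=p_\Phi^\vee$ forces $c=\pm1$, and $p_\Phi(x)\ne x+(-1)^{n+1}$ forces $p_\Phi(x)=x+(-1)^n$. As in the proof of Theorem~\ref{Th5}, \eqref{1x11} gives $\Gamma_n=\sqrt[T]{\Upsilon_n}$ with $\Upsilon_n$ similar to $J_n((-1)^{n+1})$, hence to $\Phi$, so by Theorem~\ref{Theorem 5}(c) we may replace $\Phi$ by $\Upsilon_n$; now $\mathbb F(\kappa)=\mathbb F$ with the identity involution, each $q_i$ is a scalar $a_i\ne 0$, and $\sqrt[T]{\Upsilon_n}q_i(\Upsilon_n)=a_i\Gamma_n$. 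Thus the group of (c)-summands equals $a_1\Gamma_n\oplus\dots\oplus a_s\Gamma_n$, whose congruence class is that of the quadratic form $a_1x_1^2+\dots+a_sx_s^2$ over $\mathbb F$. By Lemma~\ref{kux} this form is equivalent to exactly one $c_1x_1^2+\dots+c_tx_t^2+x_{t+1}^2+\dots+x_s^2$ with $(c_1,\dots,c_t)$ one of the eight sequences \eqref{gjrs}, so by Theorem~\ref{Theorem 5}(b) the group is congruent to copies of $\Gamma_n$ together with at most one combination drawn from the list in~(iv) (the seven nontrivial sequences, with $r$ as in \eqref{feak}), uniquely determined by $A$. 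Combining the four cases with the uniqueness assertions of Theorem~\ref{Theorem 5}(b) then yields the theorem.

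The routine steps are direct transcriptions of the finite-field proof; the one point needing care is the case analysis for the summands (c): recognizing that the relevant classification switches from regular Hermitian forms (Lemma~\ref{kurt}) to quadratic forms (Lemma~\ref{kux}) precisely as $\deg p_\Phi$ drops from $\ge 2$ to $1$, identifying which of the three quadratic extensions $\mathbb K/\mathbb K_\circ$ occurs together with the corresponding representative $t$ from \eqref{fsr}, and verifying via Lemma~\ref{LEMMA 7} that $\pi$ and $u$ admit representatives $\tilde\pi,\tilde u$ of the form \eqref{ser13}.
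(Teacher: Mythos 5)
Your proposal follows essentially the same route as the paper's proof: decompose via Theorem~\ref{Theorem 5}(a) into summands (a), (b), (c); dispatch (a) and (b) by Theorem~\ref{Theorem 5}(b) and Lemma~\ref{lsdy1}(a); and then, for the groups of (c)-summands, split on $\deg p_\Phi$, applying Lemma~\ref{kurt} (Hermitian forms over the $\mathfrak p$-adic extension $\mathbb F(\kappa)$) when $\deg p_\Phi\ge 2$ and Lemma~\ref{kux} (quadratic forms over $\mathbb F$) together with the $\Gamma_n=\sqrt[T]{\Upsilon_n}$ identification when $\deg p_\Phi=1$. Your only additions beyond the paper's text are minor but welcome explications: noting that $\mathbb K=\mathbb F(\kappa)$ is again a $\mathfrak p$-adic field and is one of the three quadratic extensions of its fixed field, and explicitly citing Lemma~\ref{LEMMA 7} to justify that $\pi,u\in\mathbb K_\circ$ admit unique representatives $\tilde\pi,\tilde u$ of the form~\eqref{ser13}.
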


\begin{proof}
Let
a field $\mathbb F$ be
a finite extension of
$\mathbb Q_p$ with
$p\ne 2$.
By Theorem
\ref{Theorem 5}(a),
every square matrix
$A$ over $\mathbb F$
is congruent to a
direct sum of matrices
of the form
\[
\text{(a)}\
J_n(0),\quad
\text{(b)}\ [\Phi\diag
I_n]\text{ if
$\sqrt[T]{\Phi}$ does
not
exist},\quad\text{(c)}\
\sqrt[T]{\Phi}q(\Phi).
\]

Consider each of these
summands.

\emph{Summands }(a).
Theorem \ref{Theorem
5}(b) ensures that the
summands of the form
$J_n(0)$ are uniquely
determined by $A$,
which gives the
summands (i) of the
theorem.

\emph{Summands }(b).
By Lemma
\ref{lsdy1}(a),
$\sqrt[T]{\Phi}$ does
not exist if and only
if \eqref{funs2}
holds. Theorem
\ref{Theorem 5}(b)
ensures that the
summands of the form
$[\Phi\diag I_n]$ are
uniquely determined by
$A$,  up to
replacement of $\Phi$
by $\Psi$ with
$\chi_{\Psi}(x)
=\chi^{\vee}_{\Phi}(x)$.
This gives the
summands (ii).

\emph{Summands }(c).
Let $\Phi$ be a
nonsingular $n\times
n$ Frobenius block for
which $\sqrt[T]{\Phi}$
exists. Then by Lemma
\ref{lsdy1}(a)
\begin{equation}
\label{htr}
p_{\Phi}(x) =
p_{\Phi}^{\vee}(x),\qquad
p_{\Phi}(x)\ne x +
(-1)^{n+1}.
\end{equation}
Consider the whole
group of summands of the form $\sqrt[T]{\Phi}q(\Phi)$ with the same $\Phi$:
\begin{equation}\label{kuyp}
\sqrt[T]{\Phi}q_1(\Phi)
\oplus\dots\oplus
\sqrt[T]{\Phi}q_s(\Phi).
\end{equation}

Let first $\deg
p_{\Phi}(x)>1$. Then
the involution
$f(\kappa)^{\circ} :=
f(\kappa^{-1})$ on the
field $\mathbb
F(\kappa) = \mathbb
F[x]/p_{\Phi}(x)\mathbb
F[x]$ (see
\eqref{alft} and
\eqref{alfta}) is
nonidentity; otherwise
$\kappa=\kappa^{\circ}=\kappa^{-1}$,
$\kappa^2-1=0$,
$x^2-1$ divides $p_{\Phi}(x)$,
and hence
$p_{\Phi}(x)=x\pm 1$
since it is
irreducible. By Lemma
\ref{kurt}, the
Hermitian form
\[q_1(\kappa)x_1^{\circ}y_1+\dots+
q_s(\kappa)x_s^{\circ}y_s\]
over $\mathbb
F(\kappa)$ is
equivalent to either $x_1^{\circ}y_1+\dots+
x_s^{\circ}y_s$ or
$tx_1^{\circ}y_1+x_2^{\circ}y_2+\dots+
x_s^{\circ}y_s$, in
which $t$ is defined
in \eqref{fsr}.
Theorem \ref{Theorem
5}(b) ensures that the
matrix \eqref{kuyp} is
congruent to
\[
\text{either }\ \sqrt[T]{\Phi}
\oplus\dots\oplus
\sqrt[T]{\Phi},\qquad\text{or
}\ \sqrt[T]{\Phi}\tilde
t(\Phi)
\oplus\sqrt[T]{\Phi}
\oplus\dots\oplus
\sqrt[T]{\Phi},
\]
in which $\tilde t(x)\in
\mathbb F[x,x^{-1}]$
is the function of the
form \eqref{ser13}
such that
$\tilde{t}(\kappa)=t$.
This sum is uniquely
determined by $A$,
which gives the
summands (iii).

Let now
$p_{\Phi}(x)=x+c$.
Then by \eqref{htr}
and \eqref{ksu},
$x+c=c^{-1}(cx+1)$,
$c=c^{-1}$, $c=\pm 1$.
The inequality in
\eqref{htr} implies
\begin{equation}
\label{hdtr1}
p_{\Phi}(x)= x +
(-1)^n.
\end{equation}
By \eqref{1x11},
$\Gamma_n=\sqrt[T]{\Upsilon_n}$
and $\Upsilon_n$ is
similar to
$J_n((-1)^{n+1})$,
which is similar to
$\Phi$ due to
\eqref{hdtr}. By
Theorem \ref{Theorem
5}(c) we can take
$\Upsilon_n$ instead
of $\Phi$ with
$p_{\Phi}(x)= x +
(-1)^n$ in Theorem
\ref{Theorem 5}(a,b).
The field $\mathbb
F(\kappa) = \mathbb
F[x]/p_{\Phi}(x)\mathbb
F[x]$ is $\mathbb F$
with the identity
involution; all
polynomials $q_i(x)$
in \eqref{kuyp} are
some scalars
$a_i\in\mathbb F$, by
Lemma \ref{kux} with
$\mathbb K= \mathbb
F$, the quadratic form
\[q_1(\kappa)x_1^2+\dots+
q_s(\kappa)x_s^2=
a_1x_1^2+\dots+
a_sx_s^2\] over
$\mathbb F$ is
equivalent to exactly
one form \eqref{ghts},
in which
$(c_1,\dots,c_t)$ is
one of the sequences
\eqref{gjrs}. Theorem
\ref{Theorem 5}(b)
ensures that
\eqref{kuyp} is
congruent to a direct
sum of matrices of the
form (iv), and this
sum is uniquely
determined by $A$.
This gives the
summands (iv).
\end{proof}

\begin{theorem}
\label{Th 5} Let a
field $\mathbb F$ with
nonidentity involution
be a
finite extension of
$\mathbb Q_p$ with
$p\ne 2$. Every square
matrix $A$ over\/
$\mathbb F$ is
*congruent to a direct
sum that is uniquely determined up to
permutation of summands and consists of
any number of summands of the following
types:
\begin{itemize}
 \item [{\rm(i)}]
$J_n(0)$;

 \item [{\rm(ii)}]
$[\Phi\diag I_n]$, in
which $\Phi$ is an
$n\times n$
nonsingular Frobenius
block over\/ $\mathbb
F$ such that
$p_{\Phi}(x) \ne
p_{\Phi}^{\vee}(x)$ $($see \eqref{ksu}$)$
and $\Phi$ is
determined up to
replacement by the
Frobenius block $\Psi$
with $\chi_{\Psi}(x)
=\chi^{\vee}_{\Phi}(x)$;

 \item [{\rm(iii)}]
for each nonsingular
Frobenius block $\Phi$
over\/ $\mathbb F$
such that $p_{\Phi}(x)
= p_{\Phi}^{\vee}(x)$:
\begin{itemize}
  \item[$\bullet$]
$\sqrt[\displaystyle
*]{\Phi}$,
  \item[$\bullet$]
at most one summand of
the form
\[
  \begin{cases}
\sqrt[\displaystyle
*]{\Phi} \tilde{ \pi}
(\Phi) & \text{if
$\mathbb K=\mathbb
K_{\circ}
(\sqrt { u})$,}\\
\sqrt[\displaystyle
*]{\Phi}\tilde{
u}(\Phi) & \text{if
$\mathbb K=\mathbb
K_{\circ}(\sqrt {
\pi})$ or $\mathbb
K=\mathbb
K_{\circ}(\sqrt {{ u}{
\pi}})$},
  \end{cases}
\]
in which $\mathbb K$ is
the following field
with involution:
\[
\mathbb K:= \mathbb
F(\kappa)=\mathbb
F[x]/p_{\Phi}(x)\mathbb
F[x],\qquad
f(\kappa)^{\circ} :=
\bar f(\kappa^{-1})\]
$($see \eqref{iut}, \eqref{alft}
and \eqref{alfta}$)$,
$\mathbb K_{\circ}$ is
its fixed field, ${
\pi}$ is a prime
element of $\mathbb
K_{\circ}$, ${ u}\in
{\cal O}(\mathbb
K_{\circ})^{\times}
\setminus \mathbb
K_{\circ}^{\times 2}$
is a unit that is not
a square, $\tilde{
\pi}(x),\tilde{
u}(x)\in \mathbb
F[x,x^{-1}]$ are the
functions of the form
\eqref{ser13} such
that $\tilde{
\pi}(\kappa)={ \pi}$
and $\tilde{
u}(\kappa)={ u}$.
\end{itemize}
\end{itemize}
\end{theorem}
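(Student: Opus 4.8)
The proof of Theorem~\ref{Th 5} parallels the proof of Theorem~\ref{Th5a}, the only change being that the residue field of $\mathbb F$ is now infinite (well, $\mathbb F$ is a $\mathfrak p$-adic field), so the classification of Hermitian forms is no longer "dimension only" but "dimension and determinant'' — hence in type~(iii) we must allow at most one extra summand carrying the nontrivial determinant. The plan is to start from Theorem~\ref{Theorem 5}(a), which says that $A$ is *congruent to a direct sum of blocks $J_n(0)$, blocks $[\Phi\diag I_n]$ with $\sqrt[*]{\Phi}$ nonexistent, and blocks $\sqrt[*]{\Phi}q(\Phi)$ with $\sqrt[*]{\Phi}$ existing and $q(x)$ of the form~\eqref{ser13}, and then to analyze each of these three families using Lemma~\ref{lsdy1}(a) for the existence criterion and Lemma~\ref{kurt} for the Hermitian-form classification over the field $\mathbb F(\kappa)$.

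**Key steps.** First, the summands $J_n(0)$: by Theorem~\ref{Theorem 5}(b) they are uniquely determined, giving type~(i). Second, the summands $[\Phi\diag I_n]$: by Lemma~\ref{lsdy1}(a), since the involution on $\mathbb F$ is nonidentity, $\sqrt[*]{\Phi}$ fails to exist exactly when $p_\Phi(x)\ne p_\Phi^\vee(x)$ (condition~\eqref{4.adlw} is vacuous for a nonidentity involution); Theorem~\ref{Theorem 5}(b) gives uniqueness up to the replacement $\chi_\Psi(x)=\chi_\Phi^\vee(x)$, yielding type~(ii). Third, and this is the substantive part, the summands $\sqrt[*]{\Phi}q(\Phi)$ with a fixed nonsingular Frobenius block $\Phi$ such that $p_\Phi(x)=p_\Phi^\vee(x)$: I would group all summands with the same $\Phi$, pass to the field $\mathbb F(\kappa)=\mathbb F[x]/p_\Phi(x)\mathbb F[x]$ with involution $f(\kappa)^\circ=\bar f(\kappa^{-1})$, and invoke Theorem~\ref{Theorem 5}(b), which says the block of summands is classified by the equivalence class of the Hermitian form $\sum q_i(\kappa)x_i^\circ y_i$ over $\mathbb F(\kappa)$. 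I must check that this involution on $\mathbb F(\kappa)$ is nonidentity (it extends the nonidentity involution on $\mathbb F$, so its fixed field $\mathbb K_\circ$ is a proper subfield), and that $\mathbb F(\kappa)$ is a $\mathfrak p$-adic field with a nonidentity involution, so Lemma~\ref{kurt} applies: the Hermitian form is classified by dimension and determinant and is equivalent to $\sum \bar x_iy_i$ or to $t\bar x_1y_1+\sum_{i\ge2}\bar x_iy_i$ with $t$ as in~\eqref{fsr}. Pulling $t\in\mathbb F(\kappa)$ back to a function $\tilde t(x)\in\mathbb F[x,x^{-1}]$ of the form~\eqref{ser13} with $\tilde t(\kappa)=t$, Theorem~\ref{Theorem 5}(b) then says the block is *congruent to $\bigoplus\sqrt[*]{\Phi}\oplus\bigl(\text{at most one }\sqrt[*]{\Phi}\tilde t(\Phi)\bigr)$ and this sum is uniquely determined by $A$, which is precisely type~(iii). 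Note that, in contrast with Theorem~\ref{Theorem 5f}, there is \emph{no} separate $\Gamma_n$-type family here: since the involution on $\mathbb F$ is nonidentity, the degenerate case $p_\Phi(x)=x\pm1$ does not split off a residue field with identity involution with a genuine quadratic-form classification — $\mathbb F(\kappa)$ is still all of $\mathbb F$ but now with the nonidentity involution restricted from $\mathbb F$ (when $\deg p_\Phi=1$), so Lemma~\ref{kurt} still governs it and it merges into type~(iii); this is why there are only three types of summands.

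**Main obstacle.** The delicate point is the bookkeeping in type~(iii) when $\deg p_\Phi(x)=1$, i.e. $p_\Phi(x)=x+c$ with $c=c^\vee$. One must verify that $\mathbb F(\kappa)\cong\mathbb F$ inherits exactly the nonidentity involution $a\mapsto\bar a$ (because $\kappa\mapsto\kappa^{-1}$ combined with the condition $x+c=(x+c)^\vee$ forces the map to be $a\mapsto\bar a$ on scalars), so that the fixed field $\mathbb K_\circ$ is the fixed field $\mathbb F_\circ$ of $\mathbb F$ and Lemma~\ref{kurt} applies verbatim; then the "function of the form~\eqref{ser13}'' $\tilde t$ is just the constant $t\in\mathbb F$, and the surviving summands $\sqrt[*]{\Phi}\,\tilde t(\Phi)$ are literally $t\cdot\sqrt[*]{\Phi}$. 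Checking that these low-degree cases genuinely fall under the same umbrella as the $\deg p_\Phi\ge2$ cases — so that a single uniform statement of type~(iii) covers everything — is the one place where care is needed; the rest is a routine transcription of the proof of Theorem~\ref{Th5a} with "Lemma~\ref{THt}(b)'' replaced by "Lemma~\ref{kurt}.''
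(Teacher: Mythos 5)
Your proposal is correct and follows essentially the same route as the paper: start from Theorem~\ref{Theorem 5}(a), handle types (i) and (ii) via Theorem~\ref{Theorem 5}(b) and Lemma~\ref{lsdy1}(a), then for type (iii) group the $\sqrt[\displaystyle *]{\Phi}q(\Phi)$ summands with a fixed $\Phi$, observe that the involution on $\mathbb F(\kappa)$ is nonidentity because it extends the nonidentity involution on $\mathbb F$, apply Lemma~\ref{kurt}, and pull $t$ back to $\tilde t(x)$ of the form~\eqref{ser13}. The ``main obstacle'' you flag (the $\deg p_{\Phi}=1$ case) is actually a non-issue that the paper handles silently: the involution $f(\kappa)^{\circ}=\bar f(\kappa^{-1})$ on $\mathbb F(\kappa)$ restricts to $a\mapsto\bar a$ on scalars by its very definition, so it is nonidentity for every $\Phi$ regardless of degree, and no separate bookkeeping is needed.
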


\begin{proof}
Let a
field $\mathbb F$ with
nonidentity involution
be a
finite extension of
$\mathbb Q_p$ with
$p\ne 2$.
By Theorem
\ref{Theorem 5}(a),
every square matrix
$A$ over $\mathbb F$
is *congruent to a
direct sum of matrices
of the form
\[
\text{(a)}\
J_n(0),\quad
\text{(b)}\ [\Phi\diag
I_n]\text{ if
$\sqrt[\displaystyle
*]{\Phi}$ does not
exist},\quad\text{(c)}\
\sqrt[\displaystyle
*]{\Phi}q(\Phi).
\]

Consider each of these
summands.

\emph{Summands }(a).
Theorem \ref{Theorem
5}(b) ensures that the
summands of the form
$J_n(0)$ are uniquely
determined by $A$,
which gives the
summands (i) of the
theorem.

\emph{Summands }(b).
By Lemma
\ref{lsdy1}(a),
$\sqrt[\displaystyle
*]{\Phi}$ does not
exist if and only if $
p_{\Phi}(x) \ne
p_{\Phi}^{\vee}(x)$.
Theorem \ref{Theorem
5}(b) ensures that the
summands of the form
$[\Phi\diag I_n]$ are
uniquely determined by
$A$,  up to replacement
of $\Phi$ by $\Psi$
with $\chi_{\Psi}(x)
=\chi^{\vee}_{\Phi}(x)$.
This gives the
summands (ii).

\emph{Summands }(c).
Let $\Phi$ be a
nonsingular $n\times
n$ Frobenius block for
which
$\sqrt[\displaystyle
*]{\Phi}$ exists; this
means that $
p_{\Phi}(x) =
p_{\Phi}^{\vee}(x)$.
Consider the whole
group of summands of the form $\sqrt[\displaystyle
*]{\Phi}q(\Phi)$ with the same $\Phi$:
\begin{equation}\label{kuys}
\sqrt[\displaystyle
*]{\Phi}q_1(\Phi)
\oplus\dots\oplus
\sqrt[\displaystyle
*]{\Phi}q_s(\Phi).
\end{equation}
The involution
$f(\kappa)^{\circ} :=
f(\kappa^{-1})$ on the
field $\mathbb
F(\kappa) = \mathbb
F[x]/p_{\Phi}(x)\mathbb
F[x]$ (see
\eqref{alft} and
\eqref{alfta}) is
nonidentity since it
extends the
nonidentity involution
on $\mathbb F$. By
Lemma \ref{kurt} with
$\mathbb K:=\mathbb
F(\kappa)$, the
Hermitian form
\[q_1(\kappa)x_1^{\circ}y_1+\dots+
q_s(\kappa)x_s^{\circ}y_s\]
over $\mathbb
F(\kappa)$ is
equivalent to either
$x_1^{\circ}y_1+\dots+
x_s^{\circ}y_s$, or $tx_1^{\circ}y_1+ x_2^{\circ}y_2+\dots+
x_s^{\circ}y_s$, in
which $t$ is defined
in \eqref{fsr}. By
Theorem \ref{Theorem
5}(b), the matrix
\eqref{kuys} is
*congruent to
\[
\text{either }\  \sqrt[\displaystyle
*]{\Phi}
\oplus\dots\oplus
\sqrt[\displaystyle
*]{\Phi},\qquad\text{or
}\ \sqrt[\displaystyle
*]{\Phi}\tilde t(\Phi)
\oplus\sqrt[\displaystyle
*]{\Phi}
\oplus\dots\oplus
\sqrt[\displaystyle
*]{\Phi},
\]
where $\tilde t(x)\in
\mathbb F[x,x^{-1}]$
is the function of the
form \eqref{ser13}
such that
$\tilde{t}(\kappa)=t$.
This sum is uniquely
determined by $A$,
which gives the
summands (iii).
\end{proof}

\subsection{Canonical pairs of
symmetric or
skew-symmetric
matrices}

For each Frobenius
block $\Phi$, denote
by $\sqrt[T]{\Phi}$
and
$\Phi_{\varepsilon}$
($\varepsilon=\pm 1$)
fixed nonsingular
matrices satisfying,
respectively, the
conditions
\begin{align}\label{vrfmau1}
\sqrt[T]{\Phi}&=(\sqrt[T]{\Phi})^TA,
   \\ \label{vrfmau2a}
\Phi_{\varepsilon}
&=\Phi_{\varepsilon}^T,
\quad
\Phi_{\varepsilon}A=
\varepsilon
(\Phi_{\varepsilon}A)^T,
\end{align}
in which $A$ is
similar to $\Phi$.
Each of these matrices
may not exist for
some $\Phi$; existence
conditions and
explicit forms of
these matrices were
established in
\cite{ser_izv}.

\begin{theorem}\label{Them 4a}
Let a field $\mathbb F$ be a finite
extension of $\mathbb Q_p$ with $p\ne
2$. Each pair of symmetric matrices of
the same size over $\mathbb F$ is
congruent to a direct sum that is
uniquely determined up to permutation
of summands and consists of any number
of summands of the following types:
\begin{itemize}
  \item[\rm(i)]
$([F_n\diag
 F_n^T],\,
[G_n\diag G_n^T])$,
in which $F_n$ and $G_n$
are defined in
\eqref{ser17};

  \item[\rm(ii)]
for each nonsingular
Frobenius block
${\Phi}$ over $\mathbb
F$:
\begin{itemize}
  \item[$\bullet$]
$(\Phi_{1},
\Phi_{1}\Phi)$, in
which $\Phi_1$ is
defined in
\eqref{vr2},

  \item[$\bullet$]
at most one summand of
the form
\[
(\Phi_{1},
\Phi_{1}\Phi)
f_1(\Phi)\oplus\dots\oplus
(\Phi_{1},
\Phi_{1}\Phi)f_t(\Phi),
\]
in which
$(f_1(x),\dots,f_t(x))$
is a sequence of
polynomials over
$\mathbb F$ of degree
$<\deg(p_{\Phi}(x))$
such that the sequence
$(f_1(\omega),
\dots,f_t(\omega))$ of
elements of the field
\[\mathbb K:={\mathbb
F}(\omega)={\mathbb
F}[x]/p_{\Phi}(x){\mathbb
F}[x]\] is one of the
sequences
\begin{equation}\label{glrs}
(1),\ ( u),\ ( \pi),\
( u \pi),\ ( u, \pi),\
( u, u \pi),\ ( \pi,r
\pi), \ ( u, \pi, r
\pi),
\end{equation}
where
\begin{equation}\label{fepa}
r:=\begin{cases}
 u &
\text{if $p^m\equiv 1\mod 4$}, \\
 1& \text{if
$p^m\equiv 3\mod 4$},
  \end{cases}
\end{equation}
$p^m$ is the
number of elements of
the residue field
${\cal O}(\mathbb
K)/\mathfrak m$ of
$\mathbb K$, $ u\in
{\cal O}(\mathbb
K)^{\times}\setminus
\mathbb K^{\times 2}$
is a unit that is not
a square, and $ \pi$
is a prime element of
$\mathbb K$;
\end{itemize}

  \item[\rm(iii)]
for each
$n=1,2,\dots$:
\begin{itemize}
  \item[$\bullet$]
the pair of
$n$-by-$n$ matrices
\begin{equation}\label{ser20}
B_n:= \left(\!
 \begin{bmatrix}
0&&1&0\\
&\ddd&\ddd\\
1&0\\
0&&&0
\end{bmatrix},
\begin{bmatrix}
0&&&1\\
&&\ddd\\
&1\\1&&&0
\end{bmatrix}
 \!\right),
\end{equation}

\item[$\bullet$] at
most one summand of
the form
\begin{equation}\label{flvr}
B_nc_1
\oplus\dots\oplus
B_nc_t,
\end{equation}
in which
$(c_1,\dots,c_t)$ is
one of the sequences
\begin{equation}\label{grka}
(1),\ (u),\ (\pi),\ (u
\pi),\ (u,\pi),\ (u,u
\pi),\ (\pi,r \pi),\
(u,\pi, r \pi),
\end{equation}
where
\begin{equation}\label{fega}
r:= \begin{cases} u &
\text{if $p^m\equiv 1\mod 4$}, \\
1 & \text{if
$p^m\equiv 3\mod 4$},
  \end{cases}
\end{equation}
$p^m$ is the number of
elements of the
residue field ${\cal
O}(\mathbb
F)/\mathfrak m$ of
$\mathbb F$, $u\in
{\cal O}(\mathbb
F)^{\times}\setminus
\mathbb F^{\times 2}$
is a unit that is not
a square, and $\pi$ is
a prime element of
$\mathbb F$.
\end{itemize}
\end{itemize}
\end{theorem}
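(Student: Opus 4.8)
The plan is to specialize Theorem~\ref{The4} to $\mathbb F$ with the identity involution and $\varepsilon=\delta=1$, and then to dispose of the resulting equivalence problems for quadratic forms by means of Lemma~\ref{kux}; this is the $\mathfrak p$-adic counterpart of the proof of Theorem~\ref{Theo4a}, with Lemma~\ref{THt}(a) replaced throughout by Lemma~\ref{kux}. For the identity involution and $\varepsilon=\delta=1$, the matrix $\Phi_1$ exists for every Frobenius block $\Phi$ over $\mathbb F$: by Lemma~\ref{THEjM} the condition $p_\Phi(x)=\bar p_\Phi(x)$ is automatic, and the extra restriction imposed when $\varepsilon=-1$ is vacuous. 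Hence in Theorem~\ref{The4}(a) the summands of types (ii) and (iv) do not appear, and every pair $(A,B)$ of symmetric matrices over $\mathbb F$ is congruent to a direct sum of pairs of three families: (a) $([F_n\diag F_n^T],\,[G_n\diag G_n^T])$; (b) $A_\Phi^{f(x)}=(\Phi_1,\Phi_1\Phi)f(\Phi)$ with $\Phi$ a nonsingular Frobenius block over $\mathbb F$ and $0\ne f(x)\in\mathbb F[x]$ of degree $<\deg p_\Phi(x)$; and (c) $B_n^a$ with $0\ne a\in\mathbb F$.

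I would then treat the three families as in the proof of Theorem~\ref{Theo4a}. The (a)-summands are uniquely determined by Theorem~\ref{The4}(b), which yields the summands~(i). For (b), fix a nonsingular Frobenius block $\Phi$ and consider the whole group $A_\Phi^{f_1(x)}\oplus\dots\oplus A_\Phi^{f_s(x)}$ with this $\Phi$; by Theorem~\ref{The4}(b) it may be replaced by $A_\Phi^{h_1(x)}\oplus\dots\oplus A_\Phi^{h_s(x)}$ precisely when the Hermitian forms $f_1(\omega)x_1^\circ y_1+\dots+f_s(\omega)x_s^\circ y_s$ and $h_1(\omega)x_1^\circ y_1+\dots+h_s(\omega)x_s^\circ y_s$ are equivalent over $\mathbb K:=\mathbb F(\omega)=\mathbb F[x]/p_\Phi(x)\mathbb F[x]$ with involution $f(\omega)^\circ=\bar f(\delta\omega)=f(\omega)$. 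This involution is the identity, so these are the quadratic forms $f_1(\omega)x_1^2+\dots+f_s(\omega)x_s^2$ over $\mathbb K$; and $\mathbb K$, being a finite extension of $\mathbb F$, is itself a finite extension of $\mathbb Q_p$, so Lemma~\ref{kux} applies to $\mathbb K$ and shows that $f_1(\omega)x_1^2+\dots+f_s(\omega)x_s^2$ is equivalent to exactly one form~\eqref{ghts} whose coefficient sequence is one of the eight sequences~\eqref{gjrs}, with the nonsquare unit, the prime element and the integer $r$ attached to $\mathbb K$ and its residue field. Translating back, the group is congruent to $A_\Phi^{h_1(x)}\oplus\dots\oplus A_\Phi^{h_t(x)}\oplus A_\Phi^1\oplus\dots\oplus A_\Phi^1$ with $(h_1(\omega),\dots,h_t(\omega))$ one of the sequences~\eqref{glrs}, and this is uniquely determined by $(A,B)$; this gives the summands~(ii). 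For (c), the group $B_n^{a_1}\oplus\dots\oplus B_n^{a_s}$ with a fixed $n$ depends, again by Theorem~\ref{The4}(b), only on the equivalence class of the quadratic form $a_1x_1^2+\dots+a_sx_s^2$ over $\mathbb F$ itself; applying Lemma~\ref{kux} with $\mathbb K=\mathbb F$ shows that this group is congruent to $B_nc_1\oplus\dots\oplus B_nc_t\oplus B_n\oplus\dots\oplus B_n$ with coefficient sequence one of~\eqref{grka}, now with $u,\pi,r$ belonging to $\mathbb F$, and this is uniquely determined by $(A,B)$; this gives the summands~(iii).

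The step that requires the most care is the bookkeeping of the fields over which the arithmetic invariants are taken: for the type-(ii) summands the quadratic form must be classified over the $\mathfrak p$-adic field $\mathbb F(\omega)$, which is in general a proper extension of $\mathbb F$ with a strictly larger residue field, so the $u,\pi,r$ appearing in~\eqref{glrs} refer to $\mathbb F(\omega)$ rather than to $\mathbb F$, whereas for the type-(iii) summands they refer to $\mathbb F$. One should also check the degenerate subcase $\deg p_\Phi(x)=1$ in family~(b): there $\mathbb F(\omega)=\mathbb F$ and the $f_i(x)$ are nonzero scalars, so Lemma~\ref{kux} applies directly with $\mathbb K=\mathbb F$. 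Finally one confirms, via Theorem~\ref{The4}(b), that after each group of type-(b) and of type-(c) summands has been normalized the whole decomposition is uniquely determined by $(A,B)$. Everything else is the routine translation between pairs of forms and pairs of matrices already carried out in the proof of Theorem~\ref{Theo4a}.
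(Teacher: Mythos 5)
Your proposal is correct and follows essentially the same route as the paper's own proof: specialize Theorem~\ref{The4} with identity involution and $\varepsilon=\delta=1$, observe via Lemma~\ref{THEjM} that $\Phi_{1}$ always exists so only the three families (a), (b), (c) occur, and then replace the quadratic-form classification step (Lemma~\ref{THt}(a) in the finite-field case) by Lemma~\ref{kux} applied to the appropriate $\mathfrak p$-adic field. Your explicit remark that for type-(ii) summands the invariants $u,\pi,r$ belong to $\mathbb K=\mathbb F(\omega)$ rather than to $\mathbb F$ is exactly the bookkeeping the paper relies on.
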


\begin{proof}
Let a
field $\mathbb F$ with the identity involution
be a
finite extension of
$\mathbb Q_p$,
$p\ne 2$. By Lemma
\ref{THEjM}, the
matrix $\Phi_1$ exists
for each nonsingular
Frobenius block $\Phi$
over $\mathbb F$. By
Theorem \ref{The4}(a),
each pair $(A,B)$ of
symmetric matrices of
the same size is
congruent to a direct
sum of pairs of the
form
\[
\text{(a)}\ ([F_n\diag
F_n^T],\, [G_n\diag
G_n^T]),\quad\text{(b)}\
A_{\Phi}^{f(x)}:=
(\Phi_1,
\Phi_1\Phi)f(\Phi),
\quad\text{(c)}\
B_n^a,
\]
in which $f(x)\in
\mathbb F[x]$ is a
nonzero polynomial of
degree
$<\deg(p_{\Phi}(x))$
and $0\ne a\in \mathbb F$.

Consider each of these
summands.

\emph{Summands }(a).
Theorem \ref{The4}(b)
ensures that the
summands of the form
(a) are uniquely
determined by $(A,B)$,
which gives the
summands (i) of the
theorem.

\emph{Summands }(b).
Consider the whole
group of summands of the form $A_{\Phi}^{g(x)}$ with the same
nonsingular Frobenius
block $\Phi$:
\begin{equation}\label{kuy3a1}
A_{\Phi}^{g_1(x)}
\oplus\dots\oplus
  A_{\Phi}^{g_s(x)}.
\end{equation}
By Lemma
\ref{kux}, the
quadratic form
\[q_1(\omega)x_1^2+\dots+
q_s(\omega)x_s^2\]
over ${ \mathbb
F}(\omega)={ \mathbb
F}[x]/p_{\Phi}(x){\mathbb
F}[x]$ is equivalent
to exactly one form
\eqref{ghts}, in which
$(a_1,\dots,a_t)$ is
one of the sequences
\eqref{gjrs}. Theorem
\ref{The4}(b) ensures
that \eqref{kuy3a1} is
congruent to a direct
sum of pairs of the
form (ii) and this sum
is uniquely determined
by $(A,B)$, which gives
the summands (ii).

\emph{Summands }(c).
For each $n$, consider
the whole group of
summands of the form $B_n^{a}$ with the same $n$:
\begin{equation}\label{kuy3b1}
B_n^{a_1}
\oplus\dots\oplus
 B_n^{a_s}.
\end{equation}
By Lemma \ref{kux},
the quadratic form
$a_1x_1^2+\dots+
a_sx_s^2$ over
${\mathbb F}$ is
equivalent to to
exactly one form
\eqref{ghts}, in which
$(c_1,\dots,c_t)$ is
one of the sequences
\eqref{gjrs}. Theorem
\ref{The4}(b) ensures
that \eqref{kuy3b1} is
congruent to a direct
sum of pairs of the
form (iii) and this
sum is uniquely
determined by $(A,B)$, which
gives the
summands (iii).
\end{proof}

\begin{theorem}\label{Theorem 4b}
Let a field $\mathbb F$ be a finite
extension of $\mathbb Q_p$ with $p\ne
2$. Each pair consisting of a symmetric
matrix and a skew-symmetric matrix of
the same size over $\mathbb F$ is
congruent to a direct sum   that is
uniquely determined up to permutation
of summands and consists of any number
of summands of the following types:
\begin{itemize}
  \item[\rm(i)]
$([F_n\diag
 F_n^T],\,
[G_n\diag - G_n^T])$,
in which $F_n$ and
$G_n$ are defined in
\eqref{ser17};

  \item[\rm(ii)]
$([I_n\diag I_n],\,
[\Phi\diag -
\Phi^T])$, in which
$\Phi$ is an $n\times
n$ Frobenius block
over $\mathbb F$ such
that
\begin{equation}\label{rad}
p_{\Phi}(x)\notin\mathbb
F[x^2], \qquad\Phi\ne
J_1(0),J_3(0),J_5(0),\dots
\end{equation}
$($see \eqref{fik}$)$,
and $\Phi$ is
determined up to
replacement by the
Frobenius block $\Psi$
with $\chi_{\Psi}(x)
=(-1)^{\det
\chi_{\Phi}}\chi_{\Phi}(-
x)$;

  \item[\rm(iii)]
for each Frobenius
block ${\Phi}$  over
$\mathbb F$ such that
$p_{\Phi}(x)\in\mathbb
F[x^2]$:
\begin{itemize}
  \item[$\bullet$]
$(\Phi_{-1},
\Phi_{-1}\Phi)$,

  \item[$\bullet$]
at most one summand of
the form
\[
  \begin{cases}
(\Phi_{-1},
\Phi_{-1}\Phi)
\tilde{\pi}(\Phi)
 &
\text{if $\mathbb K=
\mathbb
K_{\circ}(\sqrt
{ u})$,}\\
(\Phi_{-1},
\Phi_{-1}\Phi)\tilde{
u}(\Phi) & \text{if
$\mathbb K=\mathbb
K_{\circ}(\sqrt{
\pi})$ or $\mathbb
K=\mathbb
K_{\circ}(\sqrt {
u\pi})$},
  \end{cases}
\]
in which $\mathbb K$
is the following field
with involution:
\[
\mathbb K:={\mathbb
F}(\omega)={\mathbb
F}[x]/p_{\Phi}(x){\mathbb
F}[x],\qquad
f(\omega)^{\circ}=
f(-\omega),\] $\mathbb
K_{\circ}$ is its
fixed field, $\pi$ is
any prime element of
$\mathbb K_{\circ}$,
and $ u\in {\cal
O}(\mathbb
K_{\circ})^{\times}\setminus
\mathbb
K_{\circ}^{\times 2}$
is any unit that is
not a square; $\tilde{
\pi}(x)$ and $\tilde{
u}(x)$ are polynomials
over $\mathbb F$ of
degree
$<\deg(p_{\Phi}(x))$
such that
\[\tilde{\pi}(x)=
\tilde{\pi}(- x),\
\tilde{\pi}(\omega)=
\pi, \qquad \tilde{
u}(x)= \tilde{ u}(-
x),\ \tilde{
u}(\omega)= u;
\]
\end{itemize}

  \item[\rm(iv)]
$([J_n(0)\diag
J_n(0)^T], [I_n\diag
-I_n])$, in which $n$
is odd;

  \item[\rm(v)]
for each
$n=1,2,3,\dots$:
\begin{itemize}
  \item[$\bullet$]
the pair
of $n$-by-$n$
symmetric and
skew-symmetric
matrices defined as follows:
\begin{equation}\label{seEh}
C_n:=\left(\!
 \begin{bmatrix}
0&&1\\
&\ddd\\
1&&0
\end{bmatrix},
\begin{bmatrix}
0&&&&&1&0\\
&&&&\ddd&\ddd\\
&&&1&0&\\
&&-1&0&&\\
&\ddd&0&&&\\
-1&\ddd&&&&\\
0&&&&&&0
\end{bmatrix}
 \!\right)
\end{equation}
if $n$ is odd, and
\begin{equation}\label{seF}
C_n:=\left(\!
 \begin{bmatrix}
0&&1&0\\
&\ddd&\ddd\\
1&0\\
0&&&0
\end{bmatrix},
\begin{bmatrix}
0&&&&&1\\
&&&&\ddd&\\
&&&1&&\\
&&-1&&&\\
&\ddd&&&&\\
-1&&&&&0\\
\end{bmatrix}
 \!\right)
\end{equation}
if $n$ is even, and

\item[$\bullet$] at
most one summand of
the form
\begin{equation}\label{flG}
C_nc_1
\oplus\dots\oplus
C_nc_t,
\end{equation}
in which
$(c_1,\dots,c_t)$ is
one of the sequences
\begin{equation}\label{gGy}
(1),\ (u),\ (\pi),\ (u
\pi),\ (u,\pi),\ (u,u
\pi),\ (\pi,r\pi),\
(u,\pi, r\pi),
\end{equation}
where
\begin{equation}\label{feG}
r:=\begin{cases} u&
\text{if $p^m\equiv 1\mod 4$}, \\
1 & \text{if
$p^m\equiv 3\mod 4$},
  \end{cases}
\end{equation}
$p^m$ is the number of
elements of the
residue field ${\cal
O}(\mathbb
F)/\mathfrak m$ of
$\mathbb F$, $u\in
{\cal O}(\mathbb
F)^{\times}\setminus
\mathbb F^{\times 2}$
is a unit that is not
a square, and $\pi$ is
a prime element of
$\mathbb F$.
\end{itemize}

\end{itemize}
\end{theorem}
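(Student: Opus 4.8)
The plan is to follow exactly the pattern of the proofs of Theorems \ref{Theorem 4fbs} and \ref{Them 4a}: start from the decomposition provided by Theorem \ref{The4}(a) (with $\varepsilon=1$, $\delta=-1$) together with Remark \ref{hts}, and then replace the classification input over finite fields (Lemma \ref{THt}) by the classification of quadratic and Hermitian forms over $\mathfrak p$-adic fields (Lemmas \ref{kux} and \ref{kurt}). First I would set $\mathbb F$ to have the identity involution and invoke Theorem \ref{The4}(a) and Remark \ref{hts}: every pair $(A,B)$ with $A$ symmetric and $B$ skew-symmetric is congruent to a direct sum of pairs of the five types (a)--(e) listed in the proof of Theorem \ref{Theorem 4fbs} — namely $([F_n\diag F_n^T],[G_n\diag -G_n^T])$; $([I_n\diag I_n],[\Phi\diag -\Phi^T])$ when $\Phi_{-1}$ does not exist; $A_{\Phi}^{f(x)}=(\Phi_{-1},\Phi_{-1}\Phi)f(\Phi)$ with $0\ne f(x)=f(-x)$; $([J_n(0)\diag J_n(0)^T],[I_n\diag -I_n])$ with $n$ odd; and $C_n^a$ with $n$ even.

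Then I would treat the five groups of summands in turn, reusing verbatim the arguments of Theorem \ref{Theorem 4fbs} for the parts that do not involve the ground field:

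Summands (a) and (d) are handled by the uniqueness part Theorem \ref{The4}(b), giving types (i) and (iv) unchanged.

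Summands (b): by Lemma \ref{THEjM}, $\Phi_{-1}$ fails to exist exactly when \eqref{rad} holds, and Theorem \ref{The4}(b) gives uniqueness up to $\chi_{\Psi}(x)=(-1)^{\det\chi_{\Phi}}\chi_{\Phi}(-x)$, yielding type (ii).

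Summands (c): I would group $A_{\Phi}^{f_1(x)}\oplus\dots\oplus A_{\Phi}^{f_s(x)}$ with fixed $\Phi$. When $p_{\Phi}(x)\in\mathbb F[x^2]$ the involution $f(\omega)^{\circ}=f(-\omega)$ on $\mathbb K=\mathbb F[x]/p_{\Phi}(x)\mathbb F[x]$ is nonidentity, so apply Lemma \ref{kurt}: the Hermitian form $\sum f_i(\omega)x_i^{\circ}y_i$ over $\mathbb K$ is equivalent to $\sum x_i^{\circ}y_i$ or to $t\,x_1^{\circ}y_1+\sum_{i\ge 2}x_i^{\circ}y_i$ with $t$ given by \eqref{fsr}; by Theorem \ref{The4}(b) this yields the "at most one extra summand" part of type (iii). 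When instead $\Phi=J_n(0)$ with $n=2m+1$ odd, I would pass, as in Theorem \ref{Theorem 4fbs}, to the similar pair $C_n^{f(x)}=(\Phi'_{-1},\Phi'_{-1}\Phi')f(\Phi')$ via \eqref{jtw} and Theorem \ref{The4}(c); since $p_{\Phi}(x)=x$, the field $\mathbb K$ is $\mathbb F$ with identity involution and the $f_i$ are scalars $a_i$, so Lemma \ref{kux} (with $\mathbb K=\mathbb F$) classifies $\sum a_i x_i^2$ as exactly one form \eqref{ghts} with coefficient sequence one of \eqref{gjrs}; Theorem \ref{The4}(b) then gives type (v) with odd $n$.

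Summands (e): for fixed even $n$, group $C_n^{a_1}\oplus\dots\oplus C_n^{a_s}$; by Remark \ref{hts} these are congruent-equivalent to the $B_n^{a_i}$, and the classification is governed by the quadratic form $\sum a_i x_i^2$ over $\mathbb F$; Lemma \ref{kux} reduces it to exactly one form \eqref{ghts} with coefficients \eqref{gjrs}, and Theorem \ref{The4}(b) together with Remark \ref{hts} gives type (v) with even $n$.

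The main obstacle is purely bookkeeping rather than conceptual: one must be careful that the two sources of $p$-adic data are genuinely different — for type (iii) the relevant field is $\mathbb K=\mathbb F[x]/p_{\Phi}(x)\mathbb F[x]$ equipped with the involution $f(\omega)\mapsto f(-\omega)$, so it is the \emph{Hermitian} classification (Lemma \ref{kurt}) that applies, with $\mathbb K_{\circ}$ the fixed field and $t$ depending on which of $\mathbb K_{\circ}(\sqrt u),\mathbb K_{\circ}(\sqrt\pi),\mathbb K_{\circ}(\sqrt{u\pi})$ equals $\mathbb K$; whereas for types (v) (both parities) the involution collapses to the identity and it is the \emph{quadratic} classification (Lemma \ref{kux}) over the appropriate residue-cardinality-$p^m$ field that applies, with the eight sequences \eqref{gjrs} and the parity-of-$p^m\bmod 4$ split in $r$. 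I would also double-check that, in the odd-$n$ case of (v), the degenerate prime polynomial $p_{\Phi}(x)=x$ is the only Frobenius block with $p_{\Phi}(x)\in\mathbb F[x^2]$ that is singular, so that the cases $p_{\Phi}(x)\in\mathbb F[x^2]$ nonconstant (type (iii)) and $\Phi=J_n(0)$, $n$ odd (type (v)) are exhaustive and disjoint within summands (c) — exactly as recorded in \eqref{yteq} in the proof of Theorem \ref{Theorem 4fbs}. Finally, the uniqueness ("uniquely determined up to permutation of summands") follows in each case from the corresponding uniqueness clause of Theorem \ref{The4}(b) combined with the uniqueness of the normal forms in Lemmas \ref{kux} and \ref{kurt}.
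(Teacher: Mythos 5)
Your proposal reproduces the paper's proof essentially verbatim: the same decomposition from Theorem~\ref{The4}(a) together with Remark~\ref{hts}, the same per-summand treatment, the passage from $J_n(0)$ to $\Phi'$ via~\eqref{jtw} and Theorem~\ref{The4}(c) for odd $n$, and the correct substitution of Lemma~\ref{kurt} (Hermitian, nonidentity involution) in type~(iii) and Lemma~\ref{kux} (quadratic) in type~(v). One tiny slip in your bookkeeping aside --- you write that $p_{\Phi}(x)=x$ is ``the only Frobenius block with $p_{\Phi}(x)\in\mathbb F[x^2]$ that is singular,'' but in fact $x\notin\mathbb F[x^2]$; the two alternatives of~\eqref{ytw} are already disjoint because $p_\Phi\in\mathbb F[x^2]$ forces even degree, hence $\Phi$ nonsingular --- the argument is the same as the paper's.
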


\begin{proof}
Let a field $\mathbb F$ with the
identity involution be a
finite extension of
$\mathbb Q_p$,
$p\ne 2$.
By Theorem
\ref{The4}(a) and
Remark \ref{hts}, each
pair $(A,B)$
consisting of a
symmetric matrix $A$
and a skew-symmetric
matrix $B$ of the same
size is congruent to a
direct sum of pairs of
the form
\begin{itemize}
  \item[(a)]
$([F_n\diag F_n^T],\,
[G_n\diag -G_n^T])$,

  \item[(b)] $([I_n\diag
 I_n],\,
[\Phi\diag - \Phi^T])$
if $\Phi_{-1}$ does
not exist,

  \item[(c)]
$A_{\Phi}^{f(x)}:=
(\Phi_{-1},
\Phi_{-1}\Phi)f(\Phi)$,
in which $0\ne f(x)=
f(- x)\in \mathbb
F[x]$ and
$\deg(f(x))<\deg(p_{\Phi}(x))$,

 \item[(d)]
$([J_n(0)\diag
J_n(0)^T], [I_n\diag
-I_n])$, in which $n$
is odd,

 \item[(e)]
$ C_n^a$ (defined in
\eqref{serk1}), in
which $n$
is even and $ 0\ne a\in \mathbb
F$.
\end{itemize}

Consider each of these
summands.

\emph{Summands }(a).
Theorem \ref{The4}(b)
ensures that the
summands of the form
(a) are uniquely
determined by $(A,B)$,
which gives the
summands (i) of the
theorem.

\emph{Summands }(b).
By Lemma \ref{THEjM},
$\Phi_{-1}$ does not
exist if and only if
\eqref{rad} is
satisfied. Theorem
\ref{The4}(b) ensures
that the summands of
the form (b) are
uniquely determined by
$(A,B)$, up to
replacement of $\Phi$
by $\Psi$ with
$\chi_{\Psi}(x)
=(-1)^{\det
\chi_{\Phi}}\chi_{\Phi}(-
x)$, which gives the
summands (ii).

\emph{Summands }(c).
By Lemma \ref{THEjM},
$\Phi_{-1}$ exists if
and only if
\eqref{rad} is not
satisfied; that is,
\begin{equation}\label{ytw}
p_{\Phi}(x)\in\mathbb
F[x^2]\quad\text{or}\quad
\Phi=
J_1(0),J_3(0),J_5(0),\dots
\end{equation}
Consider the whole
group of summands of the form $A_{\Phi}^{f(x)}$
with the same
nonsingular Frobenius
block $\Phi$:
\begin{equation}\label{kuy3s1}
A_{\Phi}^{f_1(x)}
\oplus\dots\oplus
  A_{\Phi}^{f_s(x)}.
\end{equation}

Let first
$p_{\Phi}(x)\in\mathbb
F[x^2]$. Then the
involution
$f(\omega)^{\circ}=
f(-\omega)$ on ${
\mathbb F}(\omega)={
\mathbb
F}[x]/p_{\Phi}(x){\mathbb
F}[x]$  is nonidentity
(since $
\omega^{\circ}=-\omega\ne
\omega$). By Lemma
\ref{kurt}, the
Hermitian form
\[f_1(\omega)x_1^{\circ}y_1+\dots+
f_s(\omega)x_s^{\circ}y_s\]
over ${ \mathbb
F}(\omega)$ is
equivalent to either
$x_1^{\circ}y_1+\dots+
x_s^{\circ}y_s$, or $tx_1^{\circ}y_1+x_2^{\circ}y_2+\dots+
x_s^{\circ}y_s$, in
which $t$ is defined
in \eqref{fsr}.
Theorem \ref{Theorem
5}(b) ensures that
\eqref{kuy3s1} is
congruent to
\[
\text{either }\  A_{\Phi}^1
\oplus\dots\oplus
A_{\Phi}^1,\qquad\text{or
}\ A_{\Phi}^1\tilde
t(\Phi) \oplus
A_{\Phi}^1
\oplus\dots\oplus
A_{\Phi}^1,
\]
where $\tilde t(x)\in
\mathbb F[x,x^{-1}]$
is the function of the
form \eqref{ser13}
such that
$\tilde{t}(\kappa)=t$.
This sum is uniquely
determined by $(A,B)$,
which gives the
summands (iii).

Let now $\Phi= J_n(0)$
with $n=2m+1$ and
$m=1,2,\dots$ The
equalities \eqref{vr2}
hold for the $n\times
n$ matrices $\Phi'$ and $
\Phi_{-1}'$ defined in
\eqref{jtw} instead of
$\Phi$ and $
\Phi_{-1}$. Since
$\Phi$ and $\Phi'$
are similar, by
Theorem \ref{The4}(c)
we can take
$C_n^{f(x)}:=(\Phi'_{-1},
\Phi'_{-1}\Phi')f(\Phi')$
instead of
$A_{\Phi}^{f(x)}$ and
\begin{equation}\label{kuj1}
C_n^{f_1(x)}
\oplus\dots\oplus
  C_n^{f_s(x)}
\end{equation}
instead of
\eqref{kuy3s}.

Since $p_{\Phi}(x)=x$, the field $\mathbb
F(\omega) = \mathbb
F[x]/p_{\Phi}(x)\mathbb
F[x]$ is $\mathbb F$
with the identity
involution and all
polynomials $f_i(x)$
in \eqref{kuj} are
some scalars
$a_i\in\mathbb F$. By
Lemma \ref{kux}, the
quadratic form
$a_1x_1^2+\dots+
a_sx_s^2$ over
${\mathbb F}$ is
equivalent to exactly
one form \eqref{ghts},
in which
$(c_1,\dots,c_t)$ is
one of the sequences
\eqref{gjrs}. Theorem
\ref{The4}(b) ensures
that \eqref{kuj1} is
congruent to a direct
sum of pairs of the
form (iii), and this
sum is uniquely
determined by $(A,B)$.
This gives the
summands (v) with odd
$n$.

\emph{Summands }(d).
Theorem \ref{The4}(b)
ensures that the
summands of the form
(d) are uniquely
determined by $(A,B)$,
which gives the
summands (iv).

\emph{Summands }(e).
Consider the whole
group of summands of
the form $C_n^a$ with the same $n$:
\begin{equation}\label{kur1}
C_n^{a_1}
\oplus\dots\oplus
 C_n^{a_s}.
\end{equation}
By Lemma \ref{kux},
the quadratic form
$a_1x_1^2+\dots+
a_sx_s^2$ over
${\mathbb F}$ is
equivalent to exactly
one form \eqref{ghts},
in which
$(c_1,\dots,c_t)$ is
one of the sequences
\eqref{gjrs}. Theorem
\ref{The4}(b) ensures
that \eqref{kur1} is
congruent to a direct
sum of pairs of the
form (iii), and this
sum is uniquely
determined by $(A,B)$.
This gives the
summands (v) with even
$n$.
\end{proof}

\subsection{Canonical pairs of
Hermitian matrices}

\begin{theorem}\label{Thew}
Let a field $\mathbb
F$ with nonidentity
involution be a finite
extension of $\mathbb
Q_p$, $p\ne 2$.
Let $\mathbb
F_{\circ}$ be the
fixed field of
\/$\mathbb F$. Each pair
of Hermitian matrices
of the same size over
$\mathbb F$ is
*congruent to a direct
sum that is uniquely determined up to
permutation of summands and consists of
any number of summands of the following
types:
\begin{itemize}
  \item[\rm(i)]
$([F_n\diag
 F_n^*],\,
[G_n\diag G_n^*])$, in which
$F_n$ and $G_n$
are defined in
\eqref{ser17};

 \item[\rm(ii)]
$([I_n\diag I_n],\,
[\Phi\diag \Phi^*])$,
in which $\Phi$ is an
$n\times n$ Frobenius
block such that
$p_{\Phi}(x)\notin\mathbb
F_{\circ}[x]$, and
$\Phi$ is determined
up to replacement by
the Frobenius block
$\Psi$ with
$\chi_{\Psi}(x)
=\bar\chi_{\Phi}(x)$ $($see \eqref{iut}$)$;

  \item[\rm(iii)]
for each Frobenius
block ${\Phi}$ over
$\mathbb F$ such that
$p_{\Phi}(x)\in\mathbb
F_{\circ}[x]$:
\begin{itemize}
  \item[$\bullet$]
$(\Phi_{1},
\Phi_{1}\Phi)$, in
which $\Phi_1$ is
defined in Lemma
\ref{THEjM},

  \item[$\bullet$]
at most one summand of
the form
\[
  \begin{cases}
(\Phi_{1},
\Phi_{1}\Phi)
f_{\pi}(\Phi)
 &
\text{if $\mathbb K=
\mathbb
K_{\circ}(\sqrt
{ u})$,}\\
(\Phi_{1},
\Phi_{1}\Phi)f_{
u}(\Phi) & \text{if
$\mathbb K=\mathbb
K_{\circ}(\sqrt{
\pi})$ or $\mathbb
K=\mathbb
K_{\circ}(\sqrt {
u\pi})$},
  \end{cases}
\]
in which $\mathbb K$
is the following field
with involution:
\[
\mathbb K:={\mathbb
F}(\omega)={\mathbb
F}[x]/p_{\Phi}(x){\mathbb
F}[x],\qquad
f(\omega)^{\circ}=
\bar f(\omega),\]
$\mathbb K_{\circ}$ is
its fixed field, $\pi$
is any prime element
of $\mathbb
K_{\circ}$, and $ u\in
{\cal O}(\mathbb
K_{\circ})^{\times}\setminus
\mathbb
K_{\circ}^{\times 2}$
is any unit that is
not a square, $\tilde{
\pi}(x)$ and $\tilde{
u}(x)$ are polynomials
over $\mathbb
F_{\circ}$ of degree
$<\deg(p_{\Phi}(x))$
such that
\[
\tilde{\pi}(\omega)=
\pi, \qquad \tilde{
u}(\omega)=
 u;
\]
\end{itemize}

  \item[\rm(iv)]
for each
$n=1,2,\dots$:
\begin{itemize}
  \item[$\bullet$]
the pair of
$n$-by-$n$
 matrices
\begin{equation*}\label{jer20}
B_n:= \left(\!
 \begin{bmatrix}
0&&1&0\\
&\ddd&\ddd\\
1&0\\
0&&&0
\end{bmatrix},
\begin{bmatrix}
0&&&1\\
&&\ddd\\
&1\\1&&&0
\end{bmatrix}
 \!\right),
\end{equation*}

\item[$\bullet$] at
most one summand of
the form
\[
  \begin{cases}
B_n \pi& \text{if
$\mathbb F=\mathbb
F_{\circ}
(\sqrt {u})$,}\\
B_nu & \text{if
$\mathbb F=\mathbb
F_{\circ}(\sqrt
{\pi})$ or $\mathbb
F=\mathbb
F_{\circ}(\sqrt
{{u}{\pi}})$},
  \end{cases}
\]
in which ${\pi}$ is a
prime element of
$\mathbb F_{\circ}$
and ${u}\in {\cal
O}(\mathbb
F_{\circ})^{\times}
\setminus \mathbb
F_{\circ}^{\times 2}$
is a unit that is not
a square.
\end{itemize}
\end{itemize}
\end{theorem}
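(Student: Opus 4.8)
The plan is to carry out, word for word, the four-step reduction used in the proof of Theorem \ref{Thk4a} (the finite-field analogue of this statement) and of Theorem \ref{Them 4a}, but feeding in the $\mathfrak p$-adic classification of Hermitian forms (Lemma \ref{kurt}) wherever the finite-field proof invokes Lemma \ref{THt}(b). Let $\mathbb F$ be a finite extension of $\mathbb Q_p$, $p\ne 2$, with nonidentity involution and fixed field $\mathbb F_\circ$. Applying Theorem \ref{The4}(a) with $\varepsilon=\delta=1$ (the only admissible pair when the involution is nonidentity, so that types (iv) and (v) of that theorem do not occur), every pair $(A,B)$ of Hermitian matrices over $\mathbb F$ of the same size is *congruent to a direct sum of blocks of four kinds: (a) $([F_n\diag F_n^*],[G_n\diag G_n^*])$; (b) $([I_n\diag I_n],[\Phi\diag\Phi^*])$ with $\Phi$ a Frobenius block for which $\Phi_1$ does not exist; (c) $A_\Phi^{f(x)}:=(\Phi_1,\Phi_1\Phi)f(\Phi)$ with $0\ne f(x)=\bar f(x)\in\mathbb F[x]$ of degree $<\deg p_\Phi(x)$; and (d) $B_n^a$ (see \eqref{ser120}) with $0\ne a=\bar a\in\mathbb F$.

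First I would dispose of the easy cases. By Theorem \ref{The4}(b) the (a)-blocks are uniquely determined, giving summands (i). By Lemma \ref{THEjM} with $\varepsilon=1$ — where the existence condition reads $p_\Phi(x)=\bar p_\Phi(x)$ — the matrix $\Phi_1$ fails to exist exactly when $p_\Phi(x)\notin\mathbb F_\circ[x]$; Theorem \ref{The4}(b) then shows the (b)-blocks are determined up to replacing $\Phi$ by the Frobenius block $\Psi$ with $\chi_\Psi(x)=\bar\chi_\Phi(x)$, which yields summands (ii). For the (d)-blocks I would collect all $B_n^{a_i}$ with a fixed $n$; by Theorem \ref{The4}(b) this group is classified, up to replacement, by the equivalence class of the Hermitian form $\sum_i a_i\bar x_iy_i$ over $\mathbb F$, which is regular since all $a_i\ne 0$. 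Lemma \ref{kurt} applied to $\mathbb F$ itself reduces this form to $\sum_i\bar x_iy_i$ or $t\bar x_1y_1+\sum_{i\ge 2}\bar x_iy_i$ with $t=\pi$ if $\mathbb F=\mathbb F_\circ(\sqrt u)$ and $t=u$ otherwise (notation of \eqref{fsr}), so the group is *congruent to $B_n^1\oplus\dots\oplus B_n^1$ or $B_n^t\oplus B_n^1\oplus\dots\oplus B_n^1$; this gives summands (iv).

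The remaining case, the (c)-blocks, is where the real work sits. Fixing a Frobenius block $\Phi$ over $\mathbb F$ with $p_\Phi(x)\in\mathbb F_\circ[x]$ (note this includes the singular blocks $\Phi=J_n(0)$, for which $p_\Phi(x)=x$, so unlike Theorem \ref{Theorem 4b} no separate treatment is needed), I would collect all summands $A_\Phi^{f_i(x)}$ with that $\Phi$; by Theorem \ref{The4}(b) such a group is classified, up to replacement, by the equivalence class of the Hermitian form $\sum_i f_i(\omega)x_i^\circ y_i$ over the field $\mathbb K=\mathbb F(\omega)=\mathbb F[x]/p_\Phi(x)\mathbb F[x]$ with involution $f(\omega)^\circ=\bar f(\omega)$. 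The points to check here are: that this involution is well defined (because $p_\Phi=\bar p_\Phi$) and nonidentity (it restricts to the given involution on $\mathbb F$); that its fixed field is $\mathbb K_\circ=\mathbb F_\circ(\omega)=\mathbb F_\circ[x]/p_\Phi(x)\mathbb F_\circ[x]$, for which one notes that $p_\Phi$, being irreducible over $\mathbb F$, is irreducible over $\mathbb F_\circ$, and then a degree count gives $[\mathbb K:\mathbb F_\circ(\omega)]=2=[\mathbb K:\mathbb K_\circ]$; and that, since $p\ne 2$, the quadratic extension $\mathbb K/\mathbb K_\circ$ is one of $\mathbb K_\circ(\sqrt u)$, $\mathbb K_\circ(\sqrt\pi)$, $\mathbb K_\circ(\sqrt{u\pi})$, so the element $t$ of \eqref{fsr} attached to $\mathbb K$ is well defined. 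Since each $f_i(\omega)\ne 0$ the form $\sum_i f_i(\omega)x_i^\circ y_i$ is regular, so Lemma \ref{kurt} reduces it to $\sum_i x_i^\circ y_i$ or $tx_1^\circ y_1+\sum_{i\ge 2}x_i^\circ y_i$; writing $t=g(\omega)$ with $g\in\mathbb F_\circ[x]$ of degree $<\deg p_\Phi$ (possible since $t\in\mathbb K_\circ=\mathbb F_\circ(\omega)$), the group becomes $A_\Phi^1\oplus\dots\oplus A_\Phi^1$ or $A_\Phi^{g(x)}\oplus A_\Phi^1\oplus\dots\oplus A_\Phi^1$. This gives summands (iii), and assembling the four cases — with uniqueness up to permutation of summands inherited from Theorem \ref{The4}(b) — proves the theorem. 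I expect the bookkeeping around the involuted field $(\mathbb K,{}^{\circ})$, its fixed field $\mathbb K_\circ$, and the requirement that $\pi$ and $u$ be represented by polynomials over $\mathbb F_\circ$ of degree $<\deg p_\Phi$ (so as to match the exact shape of the summands in (iii)) to be the main obstacle; everything else transcribes directly from the finite-field proof.
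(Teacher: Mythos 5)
Your proof is correct and follows essentially the same route as the paper: decompose $(A,B)$ via Theorem~\ref{The4}(a) with $\varepsilon=\delta=1$, dispose of (a) and (b) by Theorem~\ref{The4}(b) and Lemma~\ref{THEjM}, and replace the appeal to Lemma~\ref{THt}(b) in the finite-field proof of Theorem~\ref{Thk4a} by Lemma~\ref{kurt} to classify the Hermitian forms that arise from the (c)- and (d)-blocks. One small slip to correct: type (v) of Theorem~\ref{The4}(a) (the $B_n^a$ blocks, requiring only $\varepsilon=1$) \emph{does} occur here --- it is exactly your (d)-block, which you then handle correctly --- so only type (iv) (requiring $\delta=-1$) is excluded; the rest of your bookkeeping around $(\mathbb K,{}^{\circ})$, $\mathbb K_{\circ}$, and the degree bound matches what the paper implicitly uses, and your observation that the singular $\Phi=J_n(0)$ need no separate treatment (unlike Theorem~\ref{Theorem 4b}) is a correct refinement of the paper's wording.
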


\begin{proof}
Let a field $\mathbb
F$ with nonidentity
involution be a finite
extension of $\mathbb
Q_p$, $p\ne 2$. By
Theorem \ref{The4}(a),
each pair $(A,B)$ of
Hermitian matrices of
the same size over
$\mathbb F$ is
*congruent to a direct
sum of pairs of the
form
\begin{itemize}
  \item[\rm(a)]
$([F_n\diag F_n^*],\,
[G_n\diag G_n^*])$,

  \item[\rm(b)]
$([I_n\diag I_n],\,
[\Phi\diag \Phi^*])$
if $\Phi_1$ does not
exist,

  \item[\rm(c)]
$A_{\Phi}^{f(x)}:=
(\Phi_{\delta},
\Phi_1\Phi)f(\Phi)$,
in which $0\ne
f(x)=\bar f(x)\in
\mathbb F[x]$ and
$\deg(f(x))<\deg(p_{\Phi}(x))$.

  \item[\rm(d)] $B_n^a$ (defined in
\eqref{ser120}), in
which $0\ne a=\bar
a\in \mathbb F$.
\end{itemize}

Consider each of these
summands.

\emph{Summands }(a).
Theorem \ref{The4}(b)
ensures that the
summands of the form
(a) are uniquely
determined by $(A,B)$,
which gives the
summands (i) of the
theorem.

\emph{Summands }(b).
By Lemma
\ref{THEjM}(a),
${\Phi}_1$ does not
exist if and only if $
p_{\Phi}(x) \ne \bar
p_{\Phi} (x)$; that
is, $
p_{\Phi}(x)\notin\mathbb
F_{\circ}[x]$. Theorem
\ref{The4}(b) ensures
that the summands of
the form (b) are
uniquely determined,
up to replacement of
$\Phi$ by $\Psi$ with
$\chi_{\Psi}(x)
=\bar\chi_{\Phi}(x)$.
This gives the
summands (ii).

\emph{Summands }(c).
Consider the whole
group of summands of the form $A_{\Phi}^{f(x)}$ with the same
nonsingular Frobenius
block $\Phi$:
\begin{equation}\label{ktj}
A_{\Phi}^{f_1(x)}
\oplus\dots\oplus
  A_{\Phi}^{f_s(x)}.
\end{equation}
 By Lemma
\ref{kurt}, the
Hermitian form
\[f_1(\omega)x_1^{\circ}y_1+\dots+
f_s(\omega)x_s^{\circ}y_s\]
over ${ \mathbb
F}(\omega)={ \mathbb
F}[x]/p_{\Phi}(x){\mathbb
F}[x]$ with involution
$f(\omega)^{\circ}=
\bar f(\omega)$ is
equivalent to either
$x_1^{\circ}y_1+\dots+
x_s^{\circ}y_s$, or
$tx_1^{\circ}y_1+x_2^{\circ}y_2+\dots+
x_s^{\circ}y_s$, in
which $t$ is defined
in \eqref{fsr}.
Theorem \ref{Theorem
5}(b) ensures that
\eqref{kuy3s1} is
*congruent to
\[
\text{either }\  A_{\Phi}^1
\oplus\dots\oplus
A_{\Phi}^1,\qquad\text{or
}\ A_{\Phi}^1\tilde
t(\Phi) \oplus
A_{\Phi}^1
\oplus\dots\oplus
A_{\Phi}^1,
\]
in which $\tilde t(x)\in
\mathbb F[x,x^{-1}]$
is the function of the
form \eqref{ser13}
such that
$\tilde{t}(\kappa)=t$.
This sum is uniquely
determined by $(A,B)$,
which gives the
summands (iii).

\emph{Summands }(d).
Consider the whole
group of summands of
the form $B_n^a$ with the same $n$:
\begin{equation}\label{kuyc}
B_n^{a_1}
\oplus\dots\oplus
 B_n^{a_s}.
\end{equation}
 By Lemma
\ref{kurt}, the
Hermitian form
\[a_1x_1^{\circ}y_1+\dots+
a_sx_s^{\circ}y_s\]
over ${ \mathbb F}$ is
equivalent either
$x_1^{\circ}y_1+\dots+
x_s^{\circ}y_s$, or
$tx_1^{\circ}y_1+x_2^{\circ}y_2+\dots+
x_s^{\circ}y_s$, in
which $t$ is defined
in \eqref{fsr}.
Theorem \ref{Theorem
5}(b) ensures that
\eqref{kuyc} is
*congruent to
\[
\text{either }\  B_n \oplus\dots\oplus
B_n,\qquad\text{or
}\ B_n\tilde t(\Phi)
\oplus B_n
\oplus\dots\oplus B_n,
\]
in which $\tilde t(x)\in
\mathbb F[x,x^{-1}]$
is the function of the
form \eqref{ser13}
such that
$\tilde{t}(\kappa)=t$.
This sum is uniquely
determined by $(A,B)$,
which gives the
summands (iv).
\end{proof}

\section{Appendix: Quadratic forms over finite extensions of $p$-adic fields}
\label{pad}

In this section, we recall some known
results on quadratic forms over finite
extensions of $p$-adic fields that are
used in the paper.

Let $\mathbb F$ be a field and let
$\nu$ be an \emph{exponential
variation} on $\mathbb F$; that is, a
map $\nu : \mathbb F\to \mathbb
R\cup\{+ \infty\}$ with the properties
\begin{gather}\label{lkk}
\nu (x)=+\infty\quad \Longleftrightarrow\quad x=0,\\
\min \{\nu (x), \nu (y)\}\le \nu (x+y),\\
\nu (x)+\nu (y)=\nu
(xy)
\end{gather}
for all $x,y\in\mathbb
F$.

For example, the field
$\mathbb Q_p$ of
$p$-adic numbers
possesses an
exponential variation
that is defined on
each nonzero $p$-adic
number as follows:
\begin{equation}\label{ksyk}
 v(a_zp^z+a_{z+1}p^{z+1}+\dots)=z,
\end{equation}
where
$a_i\in\{0,1,\dots,p-1\}$,
$a_z\ne 0$, and $z\in
\mathbb Z$.

In this section
$\mathbb F$ denotes a
finite extension of
$\mathbb Q_p$, $p\ne
2$. In this case the
exponential variation
\eqref{ksy} can be
extended to an
exponential variation
of $\mathbb F$. This
variation is unique
and is given by the
formula:
\begin{equation}\label{kiy}
\nu (a)=\frac 1
nv(N(a))\qquad
\text{for all
}a\in\mathbb F,
\end{equation}
in which $n:=(\mathbb
F:\mathbb Q_p)=\dim
_{\mathbb Q_p}\mathbb
F$ is the degree of
$\mathbb F$ over
$\mathbb Q_p$ and
$N(a)$ is the
\emph{norm} of $a$ in
$\mathbb F$ over
$\mathbb Q_p$; that
is, the determinant of
the linear mapping
$x\mapsto xa$ on
$\mathbb F$ as a
vector space over
$\mathbb Q_p$. If
$x^m+\alpha
_1x^{m-1}+\dots+\alpha
_m$ is the minimum
polynomial of
$a\in\mathbb F$ over
$\mathbb Q_p$ then the
variation \eqref{kiy}
can be also given by
the formula:
\begin{equation}\label{kiys}
\nu (a)=\frac 1 m
v(\alpha _m)\qquad
\text{for all
}a\in\mathbb F.
\end{equation}

Note that there exists
a natural number $e$
such that
$e\nu(\mathbb
F^{\times})=\mathbb
Z$. The ring
\begin{equation}
{\cal
O}:=\{x\in\mathbb
F\,|\,\nu (x)\ge 0\}
\end{equation}
is called the
\emph{ring of
integers} (with
respect to $\nu$);
\begin{equation}
\mathfrak
m:=\{x\in\mathbb
F\,|\,\nu (x)> 0\}=\pi
R
\end{equation}
is the unique maximal
ideal of $R$ and its
generator $\pi$ is
called a \emph{prime
element} (it is any
element of $\mathbb F$
with the smallest
positive $\nu(\pi)$;
that is,
$\nu(\pi)=1/e$); the
factor ring
\begin{equation}
{\cal O}/\mathfrak m
\end{equation}
is a field, which is
called the
\emph{residue field};
and the set
\begin{equation}
{\cal
O}^{\times}:=\{x\in\mathbb
F\,|\,\nu (x)= 0\}
\end{equation}
is the group of all
invertible elements of
${\cal O}$ (which are
called \emph{units}).

The residue field
${\cal O}/\mathfrak m$
is an extension of the
residue field $\mathbb
F_p$ of $\mathbb Q_p$
and
\begin{equation}\label{jyf}
e ({\cal O}/\mathfrak
m:\mathbb F_p)=
n=(\mathbb F:\mathbb
Q_p)
\end{equation}

By \cite[Section VI,
Theorem 2.2]{lam} or
\cite[Ch. 6, Facts
4.1]{sch} $\mathbb
F^{\times}/\mathbb
F^{\times 2}$ consists
of 4 cosets,
represented by $1,\
u,\ \pi,\ u\pi$, where
$u\in {\cal
O}^{\times}$ is a unit
with $u\notin \mathbb
F^{\times 2}$ (or,
which is equivalent,
with $u+\mathfrak
m\notin ({\cal
O}/\mathfrak
m)^{\times 2}$; see
\cite[Example
3.11]{gam}, recall
that $p\ne 2$).

The \emph{Hilbert
symbol} is defined for
$a,b\in \mathbb
F^{\times}$ by
\begin{equation}\label{mhg}
(a,b)_{\mathbb F}:=
  \begin{cases}
    1 & \text{if $ax^2+by^2$ represents $1$}, \\
    -1 & \text{otherwise}.
  \end{cases}
\end{equation}
The \emph{Hasse
invariant} of a form
$q\sim
a_1x_1^2+a_2x_2^2+\dots+a_rx_r^2$
with $a_1,\dots,a_r\in
\mathbb F^{\times}$ is
\begin{equation}\label{loy}
c(q):=\prod_{i<
j}(a_i,a_j)_{\mathbb
F}
\end{equation}
(see \cite[Ch. VIII,
p. 210]{mil}).

By \cite[Ch. VIII, Theorem 4.10]{mil},
two quadratic forms over $\mathbb F$
are equivalent if and only if they have
the same rank $n$, the same
discriminant $d$ (in $\mathbb
F^{\times}/\mathbb F^{\times 2}$), and
the same Hasse invariant. By \cite[Ch.
VIII, Proposition 4.11]{mil}, if $q$ is
a quadratic form of rank $r$, then
\begin{itemize}
  \item
If $r=1$ then
$c(q)=1$.
  \item
If $r=2$ and $c(q)=-1$
then $d(q)\ne -1$ (mod
squares).
\end{itemize}
Apart from these
constraints, every
triple $r\ge 1$,
$d\in\{1,\ u,\ \pi,\
u\pi\}$ (mod squares),
$c=\pm 1$ occurs as
the set of invariants
of a quadratic form
over $\mathbb F$.

\begin{theorem}
Let $\mathbb F$ be a
finite extension of
$\mathbb Q_p$ with
$p\ne 2$. Let its
residue field ${\cal
O}/\mathfrak m$
consist of $p^m$
elements. Let $u\in
{\cal
O}^{\times}\setminus
\mathbb F^{\times 2}$
be a unit that is not
a square, and $\pi$ be
a prime element. Then
each quadratic form of
rank $r\ge 1$ over
$\mathbb F$ is
equivalent to exactly
one form
\begin{equation}\label{gts}
a_1x_1^2+a_2x_2^2+\dots+a_tx_t^2
+x_{t+1}^2+\dots+x_r^2,
\end{equation}
in which
$(a_1,\dots,a_t)$ is
one of the sequences:
\begin{equation}\label{grs}
(1),\ (u),\ (\pi),\
(u\pi),\ (u,\pi),\
(u,u\pi),
\end{equation}
\begin{equation}\label{fea}
\begin{cases}
    (\pi,u\pi),\ (u,\pi,u\pi) & \text{if $p^m\equiv 1\mod 4$}, \\
    (\pi,\pi),\ (u,\pi,\pi) & \text{if $p^m\equiv 3\mod 4$}.
  \end{cases}
\end{equation}
\end{theorem}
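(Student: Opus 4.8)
The plan is to read the answer off two facts recalled just above: the triple $(r,d,c)$ — rank, discriminant in $\mathbb F^{\times}/\mathbb F^{\times 2}=\{1,u,\pi,u\pi\}$, and Hasse invariant — is a complete invariant of a quadratic form over $\mathbb F$ \cite[Ch.~VIII, Thm.~4.10]{mil}, and the triples that actually occur are exactly those with $r\ge1$, $d\in\{1,u,\pi,u\pi\}$, $c=\pm1$, subject to $c=1$ when $r=1$ and $d\not\equiv-1$ (mod squares) when $r=2$ and $c=-1$ \cite[Ch.~VIII, Prop.~4.11]{mil}. So it suffices to show that the list \eqref{gts}--\eqref{fea} contains, for each admissible triple, exactly one form realising it.

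First I would compute the relevant Hilbert symbols over $\mathbb F$ by comparing valuations: $(v,w)_{\mathbb F}=1$ for any two units $v,w$; $(v,\pi)_{\mathbb F}=1$ iff $v$ is a square modulo $\mathfrak m$, whence $(u,\pi)_{\mathbb F}=-1$; and, since $(\pi,-\pi)_{\mathbb F}=1$ (the ternary form $\langle\pi,-\pi,-1\rangle$ is isotropic), $(\pi,\pi)_{\mathbb F}=(-1,\pi)_{\mathbb F}$, which equals $1$ iff $-1$ is a square in the residue field, i.e. iff $p^m\equiv1\pmod4$. Bilinearity then gives $(u,u\pi)_{\mathbb F}=-1$, $(\pi,u\pi)_{\mathbb F}=-(\pi,\pi)_{\mathbb F}$, $(u\pi,u\pi)_{\mathbb F}=(\pi,\pi)_{\mathbb F}$, and so on.

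Next, for a form $\phi=a_1x_1^2+\dots+a_tx_t^2+x_{t+1}^2+\dots+x_r^2$ the trailing ones contribute nothing to the invariants, so within a fixed rank $\phi$ is governed by $d=a_1\cdots a_t$ (mod squares) and $c=\prod_{i<j\le t}(a_i,a_j)_{\mathbb F}$. Running the eight sequences of \eqref{grs}--\eqref{fea} through these symbols, the six sequences of \eqref{grs} realise the four pairs with $c=1$ together with $(\pi,-1)$ and $(u\pi,-1)$, while the two sequences of \eqref{fea} realise $(1,-1)$ and $(u,-1)$; the case split is forced precisely because $(\pi,\pi)_{\mathbb F}$ and $(\pi,u\pi)_{\mathbb F}$ flip sign with $p^m\bmod4$, so one must use $(\pi,u\pi)$ resp.\ $(u,\pi,u\pi)$ when $p^m\equiv1$ and $(\pi,\pi)$ resp.\ $(u,\pi,\pi)$ when $p^m\equiv3$. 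Thus the eight forms have pairwise distinct $(d,c)$, and all eight pairs are attained, so uniqueness and existence follow once ranks are matched. For the ranks: every $c=1$ sequence has length $1$, so each $(d,1)$ occurs at every $r\ge1$; every $c=-1$ sequence has length $2$, except for exactly one value of $d$ where it has length $3$, and that value is precisely $-1$ mod squares ($-1\equiv1$ if $p^m\equiv1\pmod4$, $-1\equiv u$ if $p^m\equiv3\pmod4$). Hence for $r=1$ only the four $(d,1)$ appear, for $r=2$ the attainable $c=-1$ forms are exactly those with $d\not\equiv-1$, and for $r\ge3$ all eight appear — matching the constraints of \cite[Prop.~4.11]{mil} exactly. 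Combined with the completeness of $(r,d,c)$, this gives the theorem.

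Since the two structural inputs from \cite{mil} are quoted, the only genuine work is the Hilbert-symbol bookkeeping together with the $p^m\equiv1$ versus $p^m\equiv3\pmod4$ dichotomy. The delicate point is to verify that the two extra sequences in \eqref{fea} are chosen so that their Hasse invariants are genuinely $-1$ — this is exactly where $(\pi,\pi)_{\mathbb F}$ changes sign — and that their length-$3$ representative lands on $d\equiv-1$, so as to be compatible with the restriction on rank-$2$ forms; everything else is a routine reparametrisation of the invariant triples.
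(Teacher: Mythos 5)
Your proposal is correct and follows essentially the same route as the paper: classify by the complete invariant triple $(r,d,c)$, compute the relevant Hilbert symbols, and check that the eight forms in \eqref{gts}--\eqref{fea} realise exactly the admissible triples of each rank. The only cosmetic difference is that you derive $(\pi,\pi)_{\mathbb F}=(-1,\pi)_{\mathbb F}$ from $(\pi,-\pi)_{\mathbb F}=1$ and bilinearity, whereas the paper instead cites tabulated cases for the symbols; both routes are standard and equivalent.
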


\begin{proof} Let us show that the forms \eqref{gts} give all possible invariant triples $(r,d,c)$.

The forms \eqref{gts}
with $t=1$ and
$a_1\in\{1,\ u,\ \pi,\
u\pi\}$ give all
possible triples
$(r,d,c)$ with $c=1$;
in particular, all
possible triples with
$r=1$.

The remaining forms
\eqref{gts} have the
Hasse invariant $c=-1$
since:

$\bullet$
$(u,\pi)_{\mathbb
F}=-1$ by
\cite[p.\,53, Case
2]{gam}.

$\bullet$
$(u,u\pi)_{\mathbb
F}=(u,u)_{\mathbb
F}(u,\pi)_{\mathbb
F}=-1$ since
$(u,u)_{\mathbb F}=1$
by \cite[p.\,53, Case
1]{gam}.

$\bullet$
$(\pi,\pi)_{\mathbb
F}=(-1)^{(q-1)/2}$ by
\cite[p.\,53, Case
3]{gam}. Thus,
$(\pi,\pi)_{\mathbb
F}=-1$  if $p^m\equiv
3\mod 4$ and
$(\pi,u\pi)_{\mathbb
F}=(\pi,u)_{\mathbb
F}(\pi,\pi)_{\mathbb
F}=-1$  if $p^m\equiv
1\mod 4$.

$\bullet$ If
$p^m\equiv 3\mod 4$
then the Hasse
invariant of the form
with triple
$(u,\pi,\pi)$ is
$(u,\pi)_{\mathbb
F}(u,\pi)_{\mathbb
F}(\pi,\pi)_{\mathbb
F}=(\pi,\pi)_{\mathbb
F}=-1$. If $p^m\equiv
1\mod 4$ then the
Hasse invariant of the
form with triple
$(u,\pi,u\pi)$ is
$(u,\pi)_{\mathbb
F}(u,u\pi)_{\mathbb
F}(\pi,u\pi)_{\mathbb
F}=(u,u)_{\mathbb
F}(u,\pi)_{\mathbb
F}^3(\pi,\pi)_{\mathbb
F}=(u,\pi)_{\mathbb
F}(\pi,\pi)_{\mathbb
F}=-1$.

In particular, we have
3 invariant triples
with $r=2$,  $c=-1$,
and
\[
d\in\begin{cases}
    \{u\pi,\ \pi,\ u\} \text{ (mod squares)} & \text{if $p^m\equiv 1\mod 4$}, \\
    \{u\pi,\ \pi,\ 1\}\text{ (mod squares)}  & \text{if $p^m\equiv 3\mod 4$}.
  \end{cases}
\]
But if $r=2$ and
$c=-1$ then $d$ can
have only 3 values
(mod squares) since
$d\ne -1$ (mod
squares); thus, we
have all possible
invariant triples with
$r=2$.

We have all possible
invariant triples with
$r\ge 3$ and $c=-1$,
since then $d\in\{1,\
u,\ \pi,\ u\pi\}$ (mod
squares).
\end{proof}

\subsection{Irreducible polynomials over $\mathbb Q_p$}

Let $f(x)\in \mathbb
Z_p[x]$ be a monic
polynomial whose
reduction modulo $p$
is irreducible in
$\mathbb F_p[x]$. Then
$f(x)$ is irreducible
over $\mathbb Q_p$.
\cite[Corollary
5.3.8]{Gou}

The Eisenstein
criterion. Suppose
that the polynomial
$f(x) = x^n +
a_1x^{n-1} + \dots +
a_n\in \mathbb Z_p[x]$
satisfies the
conditions $p|a_i$ for
all $i$ and $p^2\nmid
a_n$. Then $f(x)$ is
irreducible over
$\mathbb Q_p$.
\cite[Theorem
5.5]{bak}.

Let $n$ and $m$ be
coprime natural
numbers. Then the
polynomial $x^n-p^m$
is irreducible over
$\mathbb Q_p$.
\cite[Theorem
5.3]{bak}.

\subsection{Hermitian forms over local rings}

\begin{theorem}
Let $\mathbb F$ be a
finite extension of
$\mathbb Q_p$, $p\ne
2$, with a fixed
nonidentity
involution. Let
$\mathbb F_{\circ}$ be
the fixed field with
respect to this
involution. Let $u\in
{\cal O}(\mathbb
F_{\circ})^{\times}\setminus
\mathbb
F_{\circ}^{\times 2}$
be a unit that is not
a square, and $\pi$ be
a prime element of
$\mathbb F_{\circ}$.
Then each regular
(=with nonzero
determinant) Hermitian
form over $\mathbb F$
is equivalent to
either
\[\bar x_1 y_1+\dots + \bar x_n y_n,\]
or
\[
  \begin{cases}
\pi \bar x_1
y_1+\bar x_2y_2+\dots
+\bar x_{n}y_{n} &
\text{if $\mathbb
F=\mathbb F_{\circ}(\sqrt u)$,}\\
u\bar x_1y_1+\bar x_2y_2+\dots +\bar x_{n}y_{n} & \text{if
$\mathbb F=\mathbb
F_{\circ}(\sqrt \pi)$
or $\mathbb
F_{\circ}(\sqrt
{u\pi})$}.
  \end{cases}
\]
\end{theorem}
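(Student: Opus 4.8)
The plan is to reduce to the classification of regular Hermitian forms over $\mathbb F$ by rank and determinant, already recorded in Lemma~\ref{kurt} (see \cite[Ch.~10, Example~1.6(ii)]{sch}), and then to pin down an explicit non-norm scalar realizing the nontrivial determinant class.

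First I would make the determinant invariant precise. If a regular Hermitian form has Gram matrix $H=H^*$, then $\overline{\det H}=\det H^*=\det H$, so $\det H\in\mathbb F_\circ^\times$; and under $*$-congruence $H\mapsto S^*HS$ the determinant gets multiplied by $\overline{\det S}\,\det S=N(\det S)$, where $N:=N_{\mathbb F/\mathbb F_\circ}$ denotes the norm (for a quadratic extension the involution is the nontrivial element of the Galois group, so $N(a)=a\bar a$). Hence the class of $\det H$ in the group $\mathbb F_\circ^\times/N(\mathbb F^\times)$ is a $*$-congruence invariant, and by the cited classification the pair (rank, determinant class) is complete. By the norm--index theorem for quadratic extensions of local fields (local class field theory; see e.g.\ \cite{lam,sch}), $[\mathbb F_\circ^\times:N(\mathbb F^\times)]=2$. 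Thus for each rank $n\ge 1$ there are exactly two $*$-congruence classes: one represented by $\bar x_1y_1+\dots+\bar x_ny_n$ (determinant $1$), the other by $t\bar x_1y_1+\bar x_2y_2+\dots+\bar x_ny_n$ for any fixed $t\in\mathbb F_\circ^\times\setminus N(\mathbb F^\times)$. It remains only to check that the scalar $t$ named in the statement is a non-norm in each case.

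For the unramified extension $\mathbb F=\mathbb F_\circ(\sqrt u)$ I would argue by valuations: the exponential variation $\nu$ on $\mathbb F$ restricts to that on $\mathbb F_\circ$ and is fixed by the involution, so $\nu(N(a))=2\nu(a)\in 2\mathbb Z$ for every $a\in\mathbb F^\times$, while $\nu(\pi)=1$; hence $\pi\notin N(\mathbb F^\times)$, and one may take $t=\pi$. For the ramified extensions $\mathbb F=\mathbb F_\circ(\sqrt d)$ with $d\in\{\pi,u\pi\}$ (so $\nu(d)$ is odd) I would show $u\notin N(\mathbb F^\times)$: if $u=N(x+y\sqrt d)=x^2-dy^2$ with $x,y\in\mathbb F_\circ$, then $\nu(x^2)$ is even and $\nu(dy^2)$ is odd, so the two terms have distinct valuations; since $\nu(u)=0$ this forces $\nu(x)=0$ and $\nu(y)\ge 0$, whence reduction modulo the maximal ideal $\mathfrak m$ gives $u\equiv x^2$, i.e.\ $u$ is a square in the residue field. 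As $p\ne 2$, Hensel's lemma then lifts this to $u\in\mathbb F_\circ^{\times2}$, contradicting the choice of $u$; so one may take $t=u$.

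I expect the main point requiring care to be the ramified case: one must be sure that the valuation--parity comparison $\nu(x^2)\ne\nu(dy^2)$ genuinely forces $x$ to be a unit (rather than merely excluding a single configuration), and one must invoke the standard fact that for $p\ne 2$ a unit that is a square modulo $\mathfrak m$ is a square in $\mathbb F_\circ$. Both are routine once set up carefully. The only genuinely external input is the equality $[\mathbb F_\circ^\times:N(\mathbb F^\times)]=2$, for which the local-field references already cited in the paper suffice.
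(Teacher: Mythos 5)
Your proposal is correct, and it follows the same high-level skeleton as the paper (classify regular Hermitian forms by rank and determinant class in $\mathbb F_{\circ}^{\times}/N(\mathbb F^{\times})$, cite the norm-index-two fact, then exhibit an explicit non-norm), but your verification that the chosen scalar is a non-norm differs from the paper's in both cases. For the unramified extension $\mathbb F=\mathbb F_{\circ}(\sqrt u)$ you use a clean valuation-parity argument ($\nu(N(a))=2\nu(a)$ is always even while $\nu(\pi)=1$), whereas the paper reduces the hypothetical equation $c^2-d^2u=\pi$ modulo $\pi$ to contradict that $u$ is a residue nonsquare. For the ramified extensions $\mathbb F_{\circ}(\sqrt\pi)$ and $\mathbb F_{\circ}(\sqrt{u\pi})$ you show directly that $u$ is a non-norm by the valuation-parity comparison $\nu(x^2)\ne\nu(dy^2)$, reduction mod $\mathfrak m$, and Hensel's lemma; the paper instead avoids Hensel entirely by observing that $N(\alpha)=-\beta$ is itself a prime element, so $\{1,u,-\pi,-u\pi\}$ represents $\mathbb F_{\circ}^{\times}/\mathbb F_{\circ}^{\times 2}$ and the norm subgroup of index $2$ must be the one containing the square classes of $1$ and $-\beta$, leaving $u$ outside. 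Your route is arguably more uniform (all three cases become valuation arguments) at the cost of one extra lift via Hensel; the paper's is more algebraic and gets the ramified case almost for free from $N(\alpha)=-\beta$. Both are sound, and the argument you flag as needing care (forcing $\nu(x)=0$ in the ramified case) is indeed fine: the two valuations $2\nu(x)$ and $1+2\nu(y)$ have opposite parities, so the difference has valuation $\min\{2\nu(x),\,1+2\nu(y)\}$, and equating that minimum to $0$ can only be realized by the even term.
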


\begin{proof}
Since $(\mathbb F:
\mathbb F_{\circ})=2$,
we have $\mathbb
F=\mathbb
F_{\circ}(\alpha )$,
where $\alpha $ is a
root of
$f(x)=x^2+2ax+b\in\mathbb
F_{\circ}[x]$. Write
$\lambda :=\alpha +a$,
then $(\lambda
-a)^2+2a(\lambda
-a)+b=\lambda
^2-a^2+b=0$.

Therefore, we can take
$\alpha $ such that
$\alpha ^2=\beta
\in\mathbb F_{\circ}$.
Moreover, $(\alpha
a)^2=\beta a^2$ for
each $a\in\mathbb
F_{\circ}$. But
$\mathbb
F_{\circ}^{\times}/\mathbb
F_{\circ}^{\times 2}$
consists of 4 cosets,
represented by $1,\
u,\ \pi,\ u\pi$.
Hence, we can take
$\alpha $ such that
\[\alpha ^2=\beta
\in\{1,\ u,\ \pi,\
u\pi\},\] then
$\mathbb F$ is
$\mathbb
F_{\circ}(\sqrt u)$,
or $\mathbb
F_{\circ}(\sqrt \pi)$,
or $\mathbb
F_{\circ}(\sqrt
{u\pi})$. Since
$\bar\alpha ^2=\beta
$, the involution on
$\mathbb F$ is
$c+d\alpha \mapsto
a-d\alpha$,
$c,d\in\mathbb
F_{\circ}$. The
element
\[N(c+d\alpha)=(c+d\alpha)(c-d\alpha)=c^2-d^2\beta
\in\mathbb F_{\circ}\]
is the \emph{norm} of
$c+d\alpha$. The set
$N(\mathbb
F^{\times})$ of norms
of all nonzero
elements is a group.
By \cite[Ch. 6, Fact
4.3]{sch}, \emph{the
norm residue group
$\mathbb
F_{\circ}^{\times}/N(\mathbb
F^{\times})$ consists
of $2$ elements}.
\begin{itemize}
  \item
Let $\mathbb F=\mathbb
F_{\circ}(\sqrt u)$.
Then $\alpha ^2=u$,
$N(c+d\alpha)=
c^2-d^2u$. If $\pi\in
N(\mathbb F^{\times})$
then there is
$c+d\alpha$ such that
$N(c+d\alpha)=
c^2-d^2u=\pi$. Then
$c^2-d^2u=0\mod \pi$;
i.e. $u=(c/d)^2\mod
\pi$. A contradiction.
Therefore, the cosets
of  $\mathbb
F_{\circ}^{\times}/N(\mathbb
F^{\times})$ are
represented by $1,\
\pi$.

  \item
Let $\mathbb F=\mathbb
F_{\circ}(\sqrt \pi)$.
Then $\alpha ^2=\pi$,
$N(\alpha)= -\pi$. But
$-\pi$ is a prime
element too, so $1,\
u,\ -\pi,\ -u\pi$
represent 4 cosets of
$\mathbb
F_{\circ}^{\times}/\mathbb
F_{\circ}^{\times 2}$.
Thus, the cosets of
$\mathbb
F_{\circ}^{\times}/N(\mathbb
F^{\times})$ are
represented by $1,\
u$.

  \item
Let $\mathbb F=\mathbb
F_{\circ}(\sqrt
{u\pi})$. Then $\alpha
^2=u\pi$, $N(\alpha)=
-u\pi$. But $-u\pi$ is
a prime element too.
Thus, the cosets of
$\mathbb
F_{\circ}^{\times}/N(\mathbb
F^{\times})$ are
represented by $1,\
u$.

\end{itemize}

Let
$\phi(x,y)=\alpha_1\bar x_1
y_1+\dots+
\alpha_n\bar x_ny_n$
be a regular (all
$\alpha_i$ are
nonzero) Hermitian
form over $\mathbb F$.
Then the determinant
$\det(\phi):=a_1\dots
a_nN(\mathbb
F^{\times})\in \mathbb
F_{\circ}^{\times}/N(\mathbb
F^{\times})$ is an
invariant of
$\phi(x)$. By
\cite[Ch. 10, Example
1.6(ii)]{sch}, regular
Hermitian forms over
$\mathbb F$ are
classified by
dimension and
determinant.
\end{proof}

\end{document}